\documentclass[11pt, oneside]{amsart}   	
\usepackage{geometry}                		
\usepackage{graphicx}				
								
\usepackage{amssymb}
\usepackage{bm}

\newcommand\Kl{\mathrm{Kl}}
\newcommand\sumast{\mathop{\sum\nolimits^{\ast}}}
\newcommand\OO{\mathop{\mathrm{O}}}
\newcommand\ord{\mathop{\mathrm{ord}}\nolimits}
\DeclareMathOperator*{\argmin}{arg\,min}
\newcommand\exmid{\mathrel{\|}}

\makeatletter
\newcommand{\tpmod}[1]{\mkern 8mu({\operator@font mod}\mkern 6mu#1)}
\makeatother

\newcommand\bma{\bm{a}}
\newcommand\bme{\bm{\epsilon}}

\theoremstyle{plain}
\newtheorem{theorem}{Theorem}
\newtheorem{proposition}[theorem]{Proposition}
\newtheorem{lemma}[theorem]{Lemma}
\newtheorem{definition}{Definition}

\theoremstyle{remark}
\newtheorem{remark}{Remark}
\newtheorem{example}{Example}

\numberwithin{equation}{section}
\numberwithin{theorem}{section}

\usepackage{xcolor}
\definecolor{couleur_cite}{rgb}{0.05,.4,0.05}
\definecolor{couleur_link}{rgb}{0.05,0.05,0.4}
\definecolor{couleur_url}{rgb}{0.5,0,0}
\usepackage[hyperfootnotes=false]{hyperref}
\hypersetup{hypertexnames=false,colorlinks=true,citecolor=couleur_cite,linkcolor=couleur_link,urlcolor=couleur_url,
pdfstartview=FitH, pdfauthor=Djordje Milicevic and Catherine Robinson and Chloe Shupe, pdftitle=Sums of products of Kloosterman sums to prime power moduli}

\title[Sums of products of Kloosterman sums to prime power moduli]{Sums of products of Kloosterman sums\\ to prime power moduli}
\author{Djordje Mili\'cevi\'c}
\address{Bryn Mawr College, Department of Mathematics, 101 North Merion Avenue, Bryn Mawr, PA 19010, USA}
\email{dmilicevic@brynmawr.edu}

\author{Catherine Robinson}
\address{CHA Consulting, Inc., Philadelphia, PA}
\email{catierobinhood@gmail.com}

\author{Chloe Shupe}
\address{Rice University, Department of Mathematics, 6100 Main Street, Houston, TX 77005, USA}
\email{cs235@rice.edu}

\thanks{Research supported in part by the Simons Foundation Award MPS-TSM-00008085 and the National Science Foundation Grant DMS-1903301 (D.M.).}
\keywords{Sums of products, Kloosterman sums, $p$-adic method of stationary phase, exponential sums.}
\subjclass[2020]{11L05 Primary, 11L03, 11L07, 11T23 Secondary.}

\begin{document}

\begin{abstract}
We prove new bounds on complete sums of products of $k$ additively shifted Kloosterman sums to odd high prime power moduli $q=p^n$, which feature substantially stronger power savings (about $q^{-1/\lceil k/2\rceil}$ in generic configurations) and a novel quantification of the alignment among the shifts. We prove our bounds by developing a method to estimate complete exponential sums with a broad class of phases well-controlled by a specified number of initial terms in a property reflecting $p$-adic differentiability.
\end{abstract}

\maketitle

\section{Introduction}

\subsection{Cancellation paradigm}
Sums of products of complete exponential sums to a large modulus $q$ are a major player frequently encountered across analytic number theory. Typically they arise as follows: following an application of, say, Poisson summation formula or an automorphic tool such as the Vorono{\u\i} or Kuznetsov trace formula, complete exponential sums (such as Gauss or Kloosterman sums) with varying parameters arise; then, one applies the Cauchy--Schwarz or H\"older's inequality and encounters in the off-diagonal terms the complete sums of products of these complete exponential sums, such as \eqref{basic-sum} below, hoping to be able to profit from a substantial cancellation in such sums, including (ideally) the \emph{square-root cancellation} in generic configurations of parameters. Such a situation arises, for example, in studying moments of twisted $L$-functions \cite{BlomerMilicevic2015a,KowalskiMichelSawin2017,MilicevicQinWu2025}, algebraic twists of modular forms~\cite{FouvryKowalskiMichel2015a}, shapes of exponential sum paths \cite{KowalskiSawin2016,MilicevicZhang2023,DellMilicevic2025}, and more; in the context of sums of products of character values, it underlies Burgess' classical bound for short character sums~\cite{Burgess1963}. We refer the reader to \cite{FouvryKowalskiMichel2015} for a wonderful general account of sums of products in analytic number theory.

In particular, for $(a,q)=1$, consider the normalized Kloosterman sum
\begin{equation}
\label{Kl2}
\Kl_2(a;q)=\frac1{\sqrt{q}}\sumast_{x\bmod q}e\Big(\frac{x+a\bar{x}}q\Big),
\end{equation}
which satisfies the bound $|\Kl_2(a;q)|\leqslant 2^{\omega(q)}=q^{o(1)}$ by Weil's bound for prime $q$ \cite{Weil1948}, by a $p$-adic stationary phase argument for a prime power $q=p^n$ (see, for example, \cite{BlomerMilicevic2015}), and then by twisted multiplicativity in general. Then one often encounters the additively shifted sum of products (for $\bm{a}\in(\mathbb{Z}/q\mathbb{Z})^k$)
\begin{equation}
\label{basic-sum}
S(\bm{a};q)=\sum_{\substack{x\bmod q\\(x+a_i,q)=1}}\Kl_2(x+a_1;q)\Kl_2(x+a_2;q)\cdots\Kl_2(x+a_k;q),
\end{equation}
which is the central subject of the present paper.
For $a_i\not\equiv a_j\bmod q$ and $q$ squarefree, the sums $\Kl_2(x+a_i;q)$ and $\Kl_2(x+a_j;q)$ are uncorrelated in a precise sense, and the same is true in general unless the shifts $a_i$ and $a_j$ align to a high divisor of $q$. One might thus optimistically expect that the sum \eqref{basic-sum} would exhibit significant (perhaps square-root) cancellation beyond the trivial bound $S(\bm{a};q)\ll q^{1+o(1)}$ unless there is a clear reason for the failure of such cancellation in terms of alignment (perhaps pairwise alignment) of the additive shifts. Indeed, for a prime modulus $q$, tools from algebraic geometry lead to the now-classical result that
\begin{equation}
\label{sqroot-prime}
S(\bm{a};q)\ll_{k,\epsilon}q^{1/2+\epsilon}\qquad
\begin{aligned}&\text{unless}\quad k=2m\quad\text{and}\\
&\exists\sigma\in S_{2m}\text{ s.t.\ }a_{\sigma(i)}=a_{\sigma(m+i)}\,(1\leqslant i\leqslant m).\end{aligned}
\end{equation}
For this bound and its more general versions, see Fouvry--Michel--Rivat--S\'ark\"ozy~\cite[Lemma 2.1]{FouvryMichelRivatSarkozy2004}, Fouvry--Ganguly--Kowalski--Michel~\cite[Proposition 3.2]{FouvryGangulyKowalskiMichel2014}), and the references in \cite[\S1]{FouvryKowalskiMichel2015}.

\subsection{Prime power moduli and singular phases}
For applications, one is naturally interested in bounds on $S(\bm{a};q)$ that apply to arbitrary moduli $q$. By multiplicativity, the study of complete exponential sums such as \eqref{Kl2} and \eqref{basic-sum} essentially boils down to the prime power case $q=p^n$, with the case $n=1$ answered by \eqref{sqroot-prime}.

For $n\geqslant 2$, algebro-geometric tools are not directly applicable, and, for large $n$, sums of the form
\begin{equation}
\label{Sf}
S_f:=\sum_{x\bmod q}e\Big(\frac{f(x)}q\Big),
\end{equation}
where the function $f$ defined on some Zariski open subset of $\mathbb{Z}/q\mathbb{Z}$ is a rational function or, more generally, a $p$-adically smooth function in the sense that $f(x_0+p^{\kappa}t)$ ($t\in\mathbb{Z}$) may be expanded into a Taylor polynomial of suitably high degree, are instead best understood using the $p$-adic method of stationary phase such as our Lemma~\ref{statphase-lemma}. Under suitable integrality conditions, this shows that the sum \eqref{Sf} may be restricted to $f'(x)\equiv 0\bmod p^{\lfloor n/2\rfloor}$ (namely, that the contributions of $x$ not satisfying this condition fully cancel out). The congruence $f'(x)\equiv 0\bmod p$ ($p$-adically, the coarse approximation of this congruence) admits $\OO(\min(p,\deg(f\bmod p)))$ solutions, and if, additionally $f''(x)\not\equiv 0\bmod p$ (the situation termed a non-singular stationary point in analogy with the corresponding Archimedean situation of Newton's method in numerical analysis), then Hensel's Lemma can be used to show that each of these solutions lifts to a unique solution modulo $p^{\lfloor n/2\rfloor}$, whence $S_f\ll p^{n/2+\OO(1)}$. This is indeed what happens in some interesting situations, like the very evaluation of \eqref{Kl2} as in Lemma~\ref{kloost-eval}. For $q=p^2$, Fouvry and Ganguly proved square-root strength bounds on products of arbitrarily many Kloosterman sums with fixed $\mathrm{GL}_2$ shifts~\cite{FouvryGanguly2020}, a beautiful result which, however, excludes possible alignments in shifts $a_i$.

Alas, singular stationary points can and do occur in arithmetically interesting sums \eqref{Sf} to prime power moduli; this is famously the cause for the restriction to cube-free moduli $q$ in Burgess' method and its applications such as \cite{Heath-Brown1978a} and the major additional ingredient in the removal of the same restriction in the subconvexity problem for Dirichlet $L$-functions~\cite{PetrowYoung2023}. No direct analogue of the uniform bound \eqref{sqroot-prime} for the sum of products of Kloosterman sums modulo $q=p^n$ ($n\geqslant 2$) is available in the literature for $k>2$, and indeed, as we shall see, such a statement provably fails in many situations where the entries of $\bm{a}$ do not come in pairs of equals as in \eqref{sqroot-prime}, the only situation where it is indeed clear that one cannot expect significant cancellation. This problem was first considered by Mili\'cevi\'c and Zhang, whose results \cite[Theorems 3 and 4]{MilicevicZhang2023} imply that, for every $k\in\mathbb{N}$, there exist constants $\delta_i=\delta_i(k)>0$ ($i=1,2$) such that
\begin{equation}
\label{MZ-estimate}
S(\bm{a};q)\ll_kq^{1-\delta_1}\qquad
\text{unless}\quad a_i\equiv a_j\bmod q^{\delta_2}\text{ for some }1\leqslant i\neq j\leqslant k.
\end{equation}
The proof of \eqref{MZ-estimate} deals with the possibility of highly (and deeply) singular points in the sense that the relevant exponential sum $S_f$ as in \eqref{Sf} can, by a recursive argument, be restricted to points $x$ at which the derivatives of the phase $f^{(j)}$ satisfy simultaneously
\[ f^{(j)}(x)\equiv 0\bmod{q^{\eta_j}} \]
for many different values of $j$ (and constants $\eta_j>0$ depending only on $k$), and it is only by the time such conditions are captured for $j\leqslant k$ that one concludes that this can only happen when at least two of the shifts $a_i$ agree as in \eqref{MZ-estimate}. This shows that the sum $S(\bm{a};q)$ indeed exhibits some power cancellation unless there is at least some agreement among the shifts $\bm{a}$, but the method only yields $\delta_i\asymp 1/2^k$, and it is easy to believe that neither this power savings nor the mere slight alignment of two (among many) shifts in \eqref{MZ-estimate} give a full description of the true size of $S(\bm{a};q)$. One desires --- for applications as much as out of the raw need to know --- an upper bound that is, whether of square-root quality or not, demonstrably the best possible. Moreover, it is not even clear from \cite{MilicevicZhang2023} that these highly and deeply singular points that cause so much trouble can in fact occur.

\subsection{Measures of alignment and the main result}
In this paper, we seek to get closer to the true size of $S(\bm{a};q)$. We begin with defining a different measure of alignment between two tuples.

\begin{definition}
\label{alignment-partial}
For every $\bm{a}=(a_1,\dots,a_k),\bm{b}=(b_1,\dots,b_k)\in(\mathbb{Z}/p^n\mathbb{Z})^k$, let
\begin{gather}
\notag \delta_j(\bm{a},\bm{b})=\Big(p^n,\sum_{i=1}^k(b_i^j-a_i^j)\Big),\quad \Delta_j(\bm{a},\bm{b})=\ord_p\delta_j(\bm{a},\bm{b}),\\
\label{Delta-Def}
\Delta(\bm{a},\bm{b})=\min_{1\leqslant j\leqslant k}\Delta_j(\bm{a},\bm{b}).
\end{gather}
\end{definition}
It is not hard to see (see Lemma~\ref{recurrence-lemma-plus-p}) that $\Delta=\Delta(\bm{a},\bm{b})$ is (for sufficiently large primes $p\geqslant p_0(k)$ initially) the highest exponent up to $n$ such that \emph{all} corresponding power sums of $\bm{a}$ and $\bm{b}$ agree. In light of Newton's formulas for symmetric polynomials (see \S\ref{alignment-p-sec} for details), this is (still for $p\geqslant p_0(k)$) precisely the highest power such that the congruence of polynomials
\begin{equation}
\label{alignment-poly}
(X-a_1)(X-a_2)\cdots(X-a_k)\equiv (X-b_1)(X-b_2)\cdots(X-b_k)\pmod{p^{\Delta}}
\end{equation}
holds. For smaller primes, the same claims are true up to factors of size $p^{\rho_k}$, with $\rho_k$ depending on $k$ only.

Now, we may also consider the more immediate measure of alignment between tuples $\bm{a},\bm{b}\in(\mathbb{Z}/p^n\mathbb{Z})^k$ given by
\[ \Delta^0(\bm{a},\bm{b})=\max\big\{0\leqslant m\leqslant n:\bm{b}\equiv\sigma(\bm{a})\pmod{p^m}\text{ for some }\sigma\in S_k\big\}. \]
Now, from \eqref{alignment-poly} (and \eqref{Delta-Def}) it is clear that $\Delta\geqslant\Delta^0=\Delta^0(\bm{a},\bm{b})$, and, for $\Delta=1$, the condition \eqref{alignment-poly} is indeed equivalent to $\bm{b}\equiv\sigma(\bm{a})\pmod p$ for some $\sigma\in S_k$. Thus, for $p\geqslant p_0(k)$, we have that $\Delta>0$ if and only if $\Delta^0>0$ (the same also being true for smaller primes up to factors $p^{\rho_k}$, with $\rho_k$ depending on $k$ only), and, moreover, in the case of prime modulus $p\geqslant p_0(k)$, these two measures of alignment agree: $\Delta=\Delta^0$. With prime power moduli, however, \eqref{alignment-poly} can in general hold to a substantially higher power than $\Delta^0$: if $p\equiv 1\pmod k$, $n=km$, and $\mu_k$ is a primitive $k$th root in $(\mathbb{Z}/p^n\mathbb{Z})^{\times}$, then
\[ (X-\mu_kp^m)(X-\mu_k^2p^m)\cdots(X-\mu_k^{k-1}p^m)(X-p^m)\equiv X^k\pmod{p^n}, \]
and $\bm{a}=(\mu_kp^m,\mu_k^2p^m,\dots,\mu_k^{k-1}p^m,p^m)$ and $\bm{b}=\bm{0}$ satisfy $\Delta(\bm{a},\bm{b})=n$ despite the two $k$-tuples only agreeing modulo $p^m=p^{n/k}$ (that is, $\Delta^0(\bm{a},\bm{b})=n/k$). We will show in Lemma~\ref{1-over-k} that, for $p\geqslant p_0(k)$ and $\Delta(\bm{a},\bm{b})>0$,
\[ \frac1k\leqslant\frac{\Delta^0(\bm{a},\bm{b})}{\Delta(\bm{a},\bm{b})}\leqslant 1. \]

Definition~\ref{alignment-partial} allows us to quantify the degree to which the $k$-tuple $\bm{a}\in(\mathbb{Z}/p^n\mathbb{Z})^k$ appearing in \eqref{basic-sum} is non-generic for the purposes of estimating the sum $S(\bm{a};q)$.

\begin{definition}
\label{Deltaast-maindef}
Given $\bm{a}\in(\mathbb{Z}/p^n\mathbb{Z})^k$, let $T=T(\bm{a})=\{[a_i\bmod p]:1\leqslant i\leqslant k\}$ be the set of attained residues modulo $p$, and reindex the variables in the multiset $[a_1,a_2,\dots,a_k]$ according to the disjoint decomposition
\[ [a_1,a_2,\dots,a_k]=\bigsqcup_{t\in T}[a_{t1},a_{t2},\dots,a_{tk_t}],\quad a_{ti}\equiv t\pmod p\quad (1\leqslant i\leqslant k_t). \]
For every $t\in T$, denote for every subset $I\subseteq[k_t]=\{1,2,\dots,k_t\}$, $\bm{a}_t^I=(a_{ti})_{i\in I}$, and
\begin{equation}
\label{Delta-ta-def}
\Delta_t(\bm{a})=\max_{\substack{[k_t]=I\sqcup I'\\|I|=|I'|=k_t/2}}\Delta(\bm{a}_t^I,\bm{a}_t^{I'}),
\end{equation}
where $\Delta(\bm{a}_t^I,\bm{a}_t^{I'})$ is as in Definition~\ref{alignment-partial}, the maximum is taken over all decompositions of $[k_t]$ into two subsets of equal size $k_t/2$, and $\Delta_t(\bm{a})=0$ if $k_t$ is odd. Finally, define
\[ \Delta^{\ast}(\bm{a})=\min_{t\in T}\Delta_t(\bm{a}). \]
\end{definition}

In particular, we see that $\Delta^{\ast}(\bm{a})=0$ unless $k=2\kappa$ and there exists a permutation $\sigma\in S_k$ such that
\[ a_{\sigma(i)}\equiv a_{\sigma(\kappa+i)}\pmod p\quad(1\leqslant i\leqslant\kappa). \]
Further, if we denote analogously by
\[ \Delta^{0\ast}(\bm{a})=\max\big\{0\leqslant m\leqslant n:a_{\sigma(i)}\equiv a_{\sigma(\kappa+i)}\pmod{p^m}\,(1\leqslant i\leqslant\kappa)\text{ for some }\sigma\in S_k\big\}, \]
the highest exponent $m$ such that there is pairwise alignment of shifts in $\bm{a}$ modulo $p^m$, then $\Delta^{0\ast}(\bm{a})=\min_{t\in T}\max_{[k_t]=I\sqcup I',|I|=|I'|=k_t/2}\Delta^0(\bm{a}_t^I,\bm{a}_t^{I'})$, and invoking Lemma~\ref{1-over-k} we have that for $p\geqslant p_0(k)$ and $\Delta^{\ast}(\bm{a})>0$,
\[ \min_{t\in T}\frac1{k_t/2}\leqslant\frac{\Delta^{0\ast}(\bm{a})}{\Delta^{\ast}(\bm{a})}\leqslant 1. \]

We are now ready to state our main result.

\begin{theorem}
\label{main-theorem}
For every odd prime $p$, $q=p^n$, $\bm{a}\in(\mathbb{Z}/p^n\mathbb{Z})^k$, we have
\begin{equation}
\label{main-estimate}
S(\bm{a};q)\ll_kp^{n-{}{\textstyle\frac{n-\Delta^{\ast}(\bm{a})-1}{\lceil k/2\rceil}}+1},
\end{equation}
where $0\leqslant\Delta^{\ast}(\bm{a})\leqslant n$ is as in Definition~\ref{Deltaast-maindef}.
\end{theorem}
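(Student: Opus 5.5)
We first reduce to the case $n$ large relative to $k$, since otherwise the right-hand side of \eqref{main-estimate} is at least $p^n$ up to a constant depending on $k$, and the bound is trivial. Next we use Lemma~\ref{kloost-eval}, the $p$-adic stationary-phase evaluation of $\Kl_2(a;q)$ for $q=p^n$, $n\geqslant 2$. It writes $\Kl_2(x+a_i;q)$ as a sum over the two square roots $y_i$ of $x+a_i$, with $y_i^2\equiv x+a_i$, of terms $\epsilon\, e(2y_i/q)$. Here $\epsilon$ is a unit factor depending only on $p$, $n$, the Legendre symbol and the residue class. Expanding the product gives a sum of at most $2^k$ exponential sums $S_f$ of the form \eqref{Sf}. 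The phase is
\[ f(x)=2\sum_{i=1}^k\epsilon_i\sqrt{x+a_i},\qquad \epsilon_i\in\{\pm1\}, \]
where each square root is a fixed $p$-adic branch. For this we split $x$ into residue classes modulo $p$ (or modulo $p^{\rho}$) on which all $x+a_i$ are squares of units. The phase is then a $p$-adically analytic function on each such class. Since the number of signs and classes modulo $p$ is bounded, it suffices to bound each piece $\sum_{x\equiv x_0\,(p)}e(f(x)/q)$ by the right-hand side of \eqref{main-estimate}, losing a factor $p$ at most. That loss is the "$+1$" in the exponent.

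Fix $x_0$ and write $x=x_0+pt$. The Taylor coefficients of $f(x_0+pt)$ are
\[ \frac{f^{(j)}(x_0)}{j!}\,p^j=c_j\,p^j\sum_i\epsilon_i\,(x_0+a_i)^{1/2-j}, \]
with $c_j$ a $p$-adic unit up to $p^{O_k(1)}$. Only the $i$ with $a_i\equiv -x_0\equiv t$ modulo $p$ matter to leading order; the others contribute a function that is smooth of bounded "degree" in a sense the main lemma can absorb. The key is a general lemma, which I would prove as the technical heart. Suppose a phase $g(t)$ on $\mathbb{Z}/p^{n-1}$ has Taylor expansion whose first $m$ derivative coefficients $g^{(j)}(x_0)$, $1\leqslant j\leqslant m$, cannot all be divisible by $p^{\lambda}$ for a suitable $\lambda$. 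Then the complete sum is bounded by $p^{\,n-(n-\lambda)/m+O(1)}$. This is a $p$-adic van der Corput / iterated stationary phase argument. One splits $t$ into classes modulo $p^{\kappa}$ with $\kappa\approx (n-\lambda)/m$ and applies Lemma~\ref{statphase-lemma} repeatedly, each step either giving cancellation or forcing another derivative to vanish to higher order. A cleaner route, which I would try first, is to count solutions. The sum restricted to stationary points is bounded by $p^{n/2}$ times the number of $t\bmod p^{n/2}$ with $g'(t)\equiv0$. That count is controlled by a polynomial-congruence root-counting bound (Stewart/Konyagin type): roots of a degree-$m$ polynomial modulo $p^{N}$ with content $p^{\lambda}$ number $\ll p^{\,N-(N-\lambda)/m}$.

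The remaining task is to check that the non-vanishing needed in the lemma holds with $m=\lceil k_t/2\rceil\leqslant\lceil k/2\rceil$ and $\lambda\approx\Delta_t(\bm a)$. Here we use a Vandermonde / power-sum argument. Put $u_i=x_0+a_i$ for $i$ in the class $t$. Simultaneous vanishing modulo $p^{\lambda}$ of
\[ \sum_i\epsilon_i\,u_i^{1/2-j},\qquad 1\leqslant j\leqslant m, \]
together with the grouping into $+$ and $-$ signs, means that the power sums of the $+$-group and the $-$-group agree to high order. Write $w_i=u_i^{-1/2}$; these are Möbius-type changes of variables of the $a_i$. Newton's identities, as in \S\ref{alignment-p-sec} and Lemma~\ref{recurrence-lemma-plus-p}, then force the two groups to be equal in size. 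They also force $\Delta$ of the two subtuples to be at least $\lambda-O_k(1)$, which contradicts the definition of $\Delta_t(\bm a)$ in Definition~\ref{Deltaast-maindef} once $\lambda>\Delta^{\ast}(\bm a)$. Unequal group sizes, or odd $k_t$, give nonvanishing at $\lambda=O_k(1)$, so $\Delta_t=0$ there.

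I expect the main obstacle to be the translation between the conditions on power sums of $u_i^{-1/2}$ for $j\leqslant m$ and Definition~\ref{alignment-partial}, which uses power sums of the $a_i$ for $j\leqslant k$. Only $m\approx k_t/2$ equations are available, yet they must control full alignment. I would handle this by noting that when the group sizes are $r$ each, with $r\leqslant m$, agreement of the first $r$ power sums determines the elementary symmetric functions. Hence the polynomials $\prod(X-w_i)$ of the two groups agree modulo $p^{\lambda-O_k(1)}$. The inverse substitution $a_i=w_i^{-2}-x_0$ then transfers this to the congruence \eqref{alignment-poly}, and thus to $\Delta$, with losses $p^{O_k(1)}$ absorbed into the implied constant.
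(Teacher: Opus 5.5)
Your outline (Lemma~\ref{kloost-eval}, signed square-root phases, a general estimate for phases governed by their first $m=\lceil k/2\rceil$ Taylor coefficients, then an alignment argument) has the right overall shape. However, both substantive steps have gaps.

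\textbf{First gap: decoupling the residue classes.} You claim that only the $i$ with $a_i\equiv -x_0\pmod p$ matter to leading order. This is false. On the domain $X_{\bm a}$ every $x+a_i$ is a unit, so all $k$ terms enter every coefficient $\sum_{i=1}^k\epsilon_i(x+a_i)_{1/2}^{1-2j}$ at the same order, and nothing can be discarded.

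What you actually have is $\lceil k/2\rceil$ congruences that mix all residue classes $t\in T$ of the shifts. Since the exponent involves $\Delta^{\ast}(\bm a)=\min_t\Delta_t(\bm a)$, you must extract alignment in \emph{every} class from this one mixed system. Your per-class argument, with $\lceil k_t/2\rceil$ equations for the class $t$ alone, presupposes a decoupling you never establish.

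The paper obtains the decoupling by expanding each $(x+a_{ti})_{1/2}^{-1-2m}$ around $x+t$. This turns the system into a square block system in the normalized differences $p^{-\Delta^{\ast}_{\bm{\epsilon}}}\sum_i\epsilon_{ti}(a_{ti}-t)^j$. Its determinant (Lemma~\ref{determinant-lemma}) has bounded valuation because $(x+t)^{-1}\not\equiv(x+t')^{-1}\pmod p$ for $t\neq t'$, and this forces $r\leq\min_t\Delta_t+\OO(1)$ (Lemma~\ref{alignment-main}). A Hensel factorization of $\prod_i(X-y_i)\equiv\prod_i(X+y_i)$, with $y_i=\epsilon_i(x+a_i)_{1/2}^{-1}$, according to the residues of the $y_i$ modulo $p$ could serve as a substitute, but some such step is indispensable. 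Even within one class, your data are the odd power sums $\sum_{I_t}w_i^{2j-1}-\sum_{I_t'}w_i^{2j-1}$, not ``the first $r$ power sums''. So your last step also needs the sign trick of Lemma~\ref{alignment-mod-p-lemma}.

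\textbf{Second gap: the analytic lemma.} Your preferred ``cleaner route'' cannot reach the stated exponent. Stationary phase down to $p^{n/2}$ followed by a Stewart--Konyagin root count gives at best $p^{n/2}\cdot p^{n/2-(n/2-\lambda)/m}=p^{n-(n-2\lambda)/(2m)}$. That is less than half the required saving $(n-\lambda)/m$. For example, when $p\nmid m$, $m\mid n$ and $m\geq 3$, one has $\sum_{t\bmod p^n}e(t^m/p^n)=p^{n(1-1/m)}$. Counting zeros of $mt^{m-1}$ modulo $p^{n/2}$ only gives about $p^{n-n/(2m-2)}$.

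Your iterative route is what the paper does, but it needs two ingredients you do not supply.
\begin{enumerate}
\item[(a)] Control of all Taylor coefficients beyond order $m$. Whenever the first $m$ normalized derivatives at $x$ have minimal valuation $\Delta$, the higher ones must be divisible by $p^{\Delta+1}$, up to the allowance $p^{(i-m)\kappa_0}$ (item~\eqref{Fk-def-item2} of Definition~\ref{Fk-def}). Without this, the local polynomials have unbounded degree modulo $p$, and neither the branching nor your root-counting degree is bounded. For these phases this is Lemmas~\ref{khalf-lemma} and~\ref{khalf-lemma2}. Vanishing of the odd power sums of the signed $k$-tuple $\bm y$ up to order $2\lceil k/2\rceil-1$ gives $\sigma_m(\bm y)\equiv\sigma_m(-\bm y)$ for $m\leq k$, hence for all $m$ by Newton's identities. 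This is exactly why $\lceil k/2\rceil$ rather than $k$ appears in the denominator, which you assert without justification.
\item[(b)] A stratification showing that, in each class modulo $p^{\kappa_1(F)}$, all but $\OO_k(1)$ neighborhoods cancel exactly (Lemma~\ref{total-cxl-lemma}). The survivors have $L_\kappa\geq n-1-\OO(1)$, hence $\kappa\geq(n-\Delta-1)/m-\OO(1)$ (Lemma~\ref{estimate-noncxl}). Bounding the number of survivors uses that $i^{\flat}_{\kappa}$ decreases along branches, together with part (3) of Lemma~\ref{Fkprop-lemma} (see Lemma~\ref{split-domain-proposition}).
\end{enumerate}
An iteration that merely ``forces another derivative to vanish to higher order'' at each step, without this branching control, is essentially the recursive argument behind \eqref{MZ-estimate}. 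That argument only gives savings of order $2^{-k}$.
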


\begin{remark}
We in fact develop a general method to estimate complete exponential sums with phases in a class $\mathcal{F}_k(\mathbb{Z}/p^n\mathbb{Z})$ which consists of functions which allow for sufficient application of the method of stationary phase and which are well-controlled by a specified number of initial terms in a property reflecting $p$-adic differentiability. We define this class precisely in Definition~\ref{Fk-def} and then prove in Proposition~\ref{Fk-final-estimate}, which is of independent interest, a general estimate of the form
\begin{equation}
\label{sum-general-intro}
\sum_{x\in A_F}e\Big(\frac{F(x)}{p^n}\Big)\ll_kp^{n-\frac{n-\Delta_k(F)-1}{k}+1},
\end{equation}
with notations as in Definition~\ref{Fk-def}. We obtain the bound \eqref{main-estimate} in Theorem~\ref{main-theorem} by showing (which is not at all straightforward) that phases arising in $S(\bm{a};q)$ belong to the class $\mathcal{F}_{\lceil k/2\rceil}(\mathbb{Z}/p^n\mathbb{Z})$ and explicating the corresponding $\Delta_{\lceil k/2\rceil}(F)$ in terms of our measure of alignment $\Delta^{\ast}(\bm{a})$.
\end{remark}

\begin{remark}
The bound \eqref{main-estimate} marks a significant improvement over the previously known best result \eqref{MZ-estimate}. For all we know, it may well be that, in many cases, it can be further improved. However, one has to be careful with expectations in full generality. In particular, if $\Delta^{\ast}(\bm{a})=n$, then no structural cancellation can be expected in $S(\bm{a};q)$. For example, for $p$ and $\mu_k$ as in the discussion under \eqref{alignment-poly} and $\bm{a}=(\mu_kp^m,\mu_k^2p^m,\dots,p^m,0,0,\dots,0,0)\in(\mathbb{Z}/p^n\mathbb{Z})^{2k}$, we will see in Example~\ref{mu-example} in section~\ref{proof-main-theorem} that the sum
\begin{equation}
\label{mu-example-eq}
S_k(\bm{a};q)=\!\!\sumast_{x\bmod q}\Kl_2(x+\mu_kp^m;q)\Kl_2(x+\mu_k^2p^m;q)\cdots\Kl_2(x+\mu_k^kp^m;q)\Kl_2(x,q)^k\asymp q
\end{equation}
exhibits \emph{no} cancellation whatsoever, despite the pairwise agreement being only modulo $p^{\Delta^{0\ast}(\bm{a})}=q^{1/k}$.
\end{remark}

\begin{remark}
The results of \cite{MilicevicZhang2023} have been applied to the distribution of square-free integers in arithmetic progressions to smooth~\cite{Mangerel2021} and prime power moduli~\cite{ZhongZhang2026}. Theorem~\ref{main-theorem} should lead to the corresponding improvements in the key propositions \cite[Proposition 4.8]{Mangerel2021} and \cite[Theorem 1.3]{ZhongZhang2026} (which may in turn improve other results in these two papers).
\end{remark}

\begin{remark}
In the elementary case $k=2$ in \eqref{main-estimate}, it is easy to see that the sum $S(\bm{a};q)$ (which coincidentally agrees with the Ramanujan sum $c_q(a_1-a_2)$) vanishes unless $\Delta^{\ast}(\bm{a})\geqslant n-1$.
\end{remark}

\subsection{Organization of the paper}
In section~\ref{preliminaries-section}, we first collect preliminaries on the so-called $p$-adic method of stationary phase and the square roots and Kloosterman sums to odd prime power moduli. We also collect some important consequences of the theory of symmetric polynomials and define the class $\mathcal{F}_k(\mathbb{Z}/p^n\mathbb{Z})$ to which our method for estimating exponential sums of the form \eqref{sum-general-intro} applies. In section~\ref{alignment-section}, we study systems of congruences arising from the condition that the first $k$ terms in phases arising in sums $S(\bm{a};q)$ vanish to a certain power $p^r$ and relate this to the quantities from which our measure of alignment $\Delta^{\ast}(\bm{a})$ is built. In section~\ref{stratification-section}, we execute a sensitive recursive lifting and stratification argument (a generalization of the Hensel's lemma and stationary phase argument) through which we split the domain of a phase $F\in\mathcal{F}_k(\mathbb{Z}/p^n\mathbb{Z})$ into large portions over which the terms $e(F(x)/p^n)$ exhibit full cancellation and $\OO_k(1)$ neighborhoods over each of which the phase is essentially non-oscillating. Finally, in section~\ref{proof-main-theorem}, we combine all the ingredients to prove in Proposition~\ref{Fk-final-estimate} our bound on the general exponential sums as in \eqref{sum-general-intro}, show that the phases arising in $S(\bm{a};q)$ belong to the class $\mathcal{F}_{\lceil k/2\rceil}(\mathbb{Z}/p^n\mathbb{Z})$, and finally prove our main Theorem~\ref{main-theorem}.

\subsection{Notation}
As is customary in analytic number theory, we write $e(z)=e^{2\pi iz}$, $f\ll g$ or $f=\OO(g)$ to denote $|f|\leqslant Cg$ for some constant $C>0$ which may vary from line to line but is otherwise absolute unless otherwise indicated by a subscript, and $f\asymp g$ if $f\ll g$ and $g\ll f$. We also write $p^{\kappa}\exmid n$ if $p^{\kappa}\mid n$ but $p^{\kappa+1}\nmid n$. We use the notations $A\sqcup B$ and $\bigsqcup_{i\in I}A_i$ to denote pairwise disjoint unions of sets. We also use the standard combinatorial notations $(a)_k=\prod_{j=0}^{k-1}(a-j)$ and $[j]=\{1,2,\dots,j\}$, as well as $x^{+}=\max(x,0)$ for $x\in\mathbb{R}$.

\section{Preliminaries and first steps}
\label{preliminaries-section}

\subsection{Square roots modulo \texorpdfstring{$p^n$}{p to n}}
\label{sqroot-section}

In this section, we describe the construction and properties of square roots modulo high powers of a fixed odd prime $p$, following closely the exposition in \cite[\S2.2]{MilicevicZhang2023}.

Kloosterman sums $\Kl_{p^n}(a,b)$ to proper prime power moduli are explicitly evaluated (see Lemma~\ref{kloost-eval}) in terms of exponentials with phases that are solutions of congruences of the form $x^2\equiv ab\pmod{p^r}$, which are of course precisely the square roots modulo $p^r$. These may be productively thought of as restrictions of the branches of the $p$-adic square roots, and their appearance is analogous to the exponentials with square root phases appearing in the asymptotics of $J$-Bessel functions, the archimedean analogue of Kloosterman sums. To simplify the exposition, we avoid the $p$-adic language from now on but refer to \cite[\S2.4]{BlomerMilicevic2015} for more details and a fully $p$-adic perspective.

For every $r\in(\mathbb{Z}/p\mathbb{Z})^{\times 2}$, there are exactly two classes $s\in\mathbb{Z}/p\mathbb{Z}$ such that $s^2=r$. We fix once and for all a choice function $s:(\mathbb{Z}/p\mathbb{Z})^{\times 2}\to(\mathbb{Z}/p\mathbb{Z})^{\times}$ such that $s(r)^2=r$ for every $r\in(\mathbb{Z}/p\mathbb{Z})^{\times 2}$ (any of the $2^{(p-1)/2}$ such choices will do). For every $x\in(\mathbb{Z}/p^n\mathbb{Z})^{\times 2}$, by Hensel's lemma
there exists a unique $u\in(\mathbb{Z}/p^n\mathbb{Z})^{\times}$ such that $u^2\equiv x\pmod{p^n}$ and $u\equiv s(x)\pmod p$; this gives rise to a square root $u:(\mathbb{Z}/p^n\mathbb{Z})^{\times 2}\to(\mathbb{Z}/p^n\mathbb{Z})^{\times}$, which we also denote by $u=u_{1/2}(x)=x_{1/2}$ and write $x_{1/2}^k=(x_{1/2})^k$ for $k\in\mathbb{Z}$.

The square-root on $(\mathbb{Z}/p^n\mathbb{Z})^{\times\, 2}$ thus defined satisfies the following property for $x\in(\mathbb{Z}/p^n\mathbb{Z})^{\times\, 2}$, $t\in\mathbb{Z}/p^n\mathbb{Z}$, $\kappa\geqslant 1$, and $m\in\mathbb{Z}$, $j\in\mathbb{N}$:
\begin{equation}
\label{sqroot-diff}
(x+p^{\kappa}t)_{1/2}^m\equiv\sum_{i=0}^j\binom{m/2}{i}x_{1/2}^{m-2i}p^{i\kappa}t^i\pmod{p^{\min((j+1)\kappa,n)}}.
\end{equation}
where we note that all coefficients are in $\mathbb{Z}/p^n\mathbb{Z}$. The congruence (which is obtained by multiplying $x_{1/2}^m$ times the formal series expansion of $(1+p^{\kappa}x^{-1}t)^{m/2}$) is easily verified by confirming (using the Binomial Theorem) that the squares of both sides agree modulo $p^{\min((j+1)\kappa,n)}$ and that both sides agree modulo $p$; cf.~\cite[(2.6)]{BlomerMilicevic2015}.

With small variations, this construction can be extended to the case $p=2$; see, for example, \cite[\S 3.1]{MilicevicQinWu2025}. We would expect all results of the present paper to admit similar adaptations, but refrain from this to lessen the notational overload.

\subsection{Method of stationary phase}
In this section, we collect from \cite{MilicevicZhang2023} facts about the so-called $p$-adic method of stationary phase, a powerful tool in the study of complete exponential sums modulo prime powers analogous to the classical method of stationary phase for oscillatory exponential integrals. We phrase the results with emphasis on differentiability-like properties \eqref{diffble-eq1} and \eqref{diffble-eq2} but without invoking $p$-adic language. We refer to \cite{MilicevicZhang2023} for details and proofs and to \cite[Lemmata 12.2 and 12.3]{IwaniecKowalski2004}, \cite[Lemma~7]{BlomerMilicevic2015} for other variations of the method.

\begin{lemma}[Method of stationary phase, {\cite[Lemma~1]{MilicevicZhang2023}}]
\label{statphase-lemma}
Let $1\leqslant\kappa_0\leqslant n$, and let $T\subseteq\mathbb{Z}/p^n\mathbb{Z}$ be a set invariant under translations by $p^{\kappa_0}\mathbb{Z}/p^n\mathbb{Z}$.
\begin{enumerate}
\item\label{MSP-claim1} Suppose that functions $f,f_1:T\to\mathbb{Z}/p^n\mathbb{Z}$ satisfy
\begin{equation}
\label{diffble-eq1}
f(x+p^{\kappa}t)\equiv f(x)+f_1(x)\cdot p^{\kappa}t\pmod{p^{2\kappa}}
\end{equation}
for all $x\in T$, $t\in\mathbb{Z}/p^n\mathbb{Z}$, and $\kappa\geqslant\kappa_0$. Then, for every $\max(\kappa_0,n/2)\leqslant\kappa\leqslant n$,
the set $\{x\in T:f_1(x)\equiv 0\bmod p^{n-\kappa}\}$ is invariant under translations by $p^{\kappa}\mathbb{Z}/p^n\mathbb{Z}$, and
\[ \sum_{x\in T}e\left(\frac{f(x)}{p^n}\right)=\sum_{\substack{x\in T\\f_1(x)\equiv 0\bmod{p^{n-\kappa}}}}e\left(\frac{f(x)}{p^n}\right)=p^{n-\kappa}\sum_{\substack{x\in T/p^{\kappa}\mathbb{Z}\\f_1(x)\equiv 0\bmod{p^{n-\kappa}}}}e\left(\frac{f(x)}{p^n}\right). \]
\item Suppose that $n\geqslant 2$ and functions $f,f_1:T\to\mathbb{Z}/p^n\mathbb{Z}$, $f_2:T\to(\mathbb{Z}/p^n\mathbb{Z})^{\times}$  satisfy
\begin{equation}
\label{diffble-eq2}
f(x+p^{\kappa}t)\equiv f(x)+f_1(x)\cdot p^{\kappa}t+\bar{2}f_2(x)\cdot p^{2\kappa}t^2\pmod{p^{2\kappa+1}}
\end{equation}
for all $x\in T$, $t\in\mathbb{Z}/p^n\mathbb{Z}$, and $\kappa\geqslant\kappa_0$, and suppose that $\kappa_0\leqslant\lfloor n/2\rfloor$. Then,
writing $n=2\kappa+\rho$ with $\rho\in\{0,1\}$,
\[ \sum_{x\in T}e\left(\frac{f(x)}{p^n}\right)=p^{n/2}
\sum_{\substack{x_0\in T/p^{\kappa}\mathbb{Z}\\ f_1(x_0)\equiv 0\bmod p^{\kappa}}}
\varepsilon\big(2f_2(x_0),p^{\rho}\big)e\bigg(\frac{f(x_0)-\overline{2f_2(x_0)}{f_1(x_0)^2}}{p^n}\bigg), \]
where
$\varepsilon(\cdot,1)=1$ and $\varepsilon(\cdot,p)=(\cdot/p)i^{(\iota-1)/2}$ for $p\equiv \iota\bmod 4$, $\iota\in\{1,3\}$.
\end{enumerate}
\end{lemma}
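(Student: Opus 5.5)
The plan is to reduce everything to evaluating complete sums over cosets $x_0+p^{\kappa}\mathbb{Z}/p^n\mathbb{Z}$, where the expansions \eqref{diffble-eq1} and \eqref{diffble-eq2} turn the phase into a linear or quadratic polynomial in $t$. Then the orthogonality of additive characters handles part~\ref{MSP-claim1}, and a quadratic Gauss sum handles part~2.

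\emph{Part~\ref{MSP-claim1}.} Fix $\max(\kappa_0,n/2)\leqslant\kappa\leqslant n$, so $2\kappa\geqslant n$.

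1. First I prove the invariance claim. I expand $f(x+p^{\kappa}(t+s))$ in two ways using \eqref{diffble-eq1}: once around $x$, and once around $x+p^{\kappa}t$. Subtracting gives
\[ \big(f_1(x+p^{\kappa}t)-f_1(x)\big)p^{\kappa}s\equiv 0\pmod{p^{2\kappa}}\quad\text{for all } s. \]
Taking $s=1$ yields $f_1(x+p^{\kappa}t)\equiv f_1(x)\bmod p^{\kappa}$. Since $\kappa\geqslant n-\kappa$, the condition $f_1(x)\equiv 0\bmod p^{n-\kappa}$ is constant on $p^{\kappa}$-cosets.

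2. Next I partition $T$ into cosets $x+p^{\kappa}t$ with $t\bmod p^{n-\kappa}$. Modulo $p^n$ the phase is $f(x)+f_1(x)p^{\kappa}t$, so the inner sum is
\[ e\big(f(x)/p^n\big)\sum_{t\bmod p^{n-\kappa}}e\big(f_1(x)t/p^{n-\kappa}\big). \]
This equals $p^{n-\kappa}e(f(x)/p^n)$ if $p^{n-\kappa}\mid f_1(x)$ and $0$ otherwise. That gives both displayed equalities.

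\emph{Part~2.} Write $n=2\kappa+\rho$ with $\kappa=\lfloor n/2\rfloor\geqslant\kappa_0$.

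1. I first restrict the sum to $f_1(x)\equiv 0\bmod p^{\kappa}$ by applying part~\ref{MSP-claim1}. The hypothesis \eqref{diffble-eq2} implies \eqref{diffble-eq1}.
   If $\rho=0$, I apply part~\ref{MSP-claim1} at level $\kappa$. If $\rho=1$, I apply it at level $\kappa+1\geqslant n/2$. In both cases the surviving condition is $f_1\equiv 0\bmod p^{n-\kappa'}$, which is $p^{\kappa}$, and the restricted set is invariant under $p^{\kappa}$-translations. For $\rho=1$ this invariance follows from \eqref{diffble-eq2} by the same two-way expansion as above, now carried out modulo $p^{2\kappa}$.

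2. I then group the restricted set into classes $x_0\bmod p^{\kappa}$ and write $x=x_0+p^{\kappa}t$ with $t\bmod p^{\kappa+\rho}$. Since $2\kappa+1\geqslant n$, \eqref{diffble-eq2} gives the phase modulo $p^n$ exactly. Writing $f_1(x_0)=p^{\kappa}g$, the inner sum becomes
\[ e\big(f(x_0)/p^n\big)\sum_{t\bmod p^{\kappa+\rho}}e\Big(\frac{gt+\bar 2f_2(x_0)t^2}{p^{\rho}}\Big). \]

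3. If $\rho=0$, the summand is $1$, so the sum is $p^{\kappa}=p^{n/2}$. The correction term $\overline{2f_2}f_1^2/p^n=\overline{2f_2}g^2$ is an integer, so it contributes trivially, and $\varepsilon(\cdot,1)=1$ as required.

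4. If $\rho=1$, the sum is $p^{\kappa}$ times a complete quadratic Gauss sum modulo $p$. Completing the square, it equals
\[ \Big(\frac{2f_2(x_0)}{p}\Big)\varepsilon_p\sqrt p\; e\Big(-\frac{\overline{2f_2(x_0)}g^2}{p}\Big). \]
Here $\varepsilon_p=1$ or $i$ according as $p\equiv 1$ or $3\bmod 4$; together with the Legendre symbol this matches $\varepsilon(2f_2(x_0),p)$. Since $g^2/p=f_1(x_0)^2/p^n$, this reproduces the stated formula with total factor $p^{\kappa+1/2}=p^{n/2}$.

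The main point needing care is the $\rho=1$ case: the non-stationary classes with $f_1(x_0)\not\equiv 0\bmod p^{\kappa}$ must be shown to cancel fully. Applying part~\ref{MSP-claim1} at the shifted level $\kappa+1$ handles this cleanly, rather than trying to extract it from the Gauss sum. A minor related check is that the right-hand side is independent of the chosen representatives $x_0$. This is automatic, because the computation evaluates the same coset sum for any representative.
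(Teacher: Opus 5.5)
Your proof is correct and follows the standard argument for this lemma; the paper does not prove it but imports it from Mili\'cevi\'c--Zhang. The argument is orthogonality of additive characters on $p^{\kappa}$-cosets for part~1, and for part~2, restriction to $f_1\equiv 0\pmod{p^{\lfloor n/2\rfloor}}$ via part~1 at level $\lceil n/2\rceil$, followed by completing the square in a Gauss sum modulo $p$ when $n$ is odd.

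One small slip is in step~1 of part~1. Since the values lie in $\mathbb{Z}/p^n\mathbb{Z}$ and $2\kappa\geqslant n$, the two-way expansion only yields $f_1(x+p^{\kappa}t)\equiv f_1(x)\pmod{p^{n-\kappa}}$, not modulo $p^{\kappa}$. The latter can fail: for $n=3$, $\kappa=2$ the hypotheses determine $f_1$ only modulo $p$. The weaker congruence is exactly what the invariance claim needs, so the proof goes through.
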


An important application of Lemma~\ref{statphase-lemma} and the construction in \S\ref{sqroot-section} is the following classical evaluation of Kloosterman sums to odd prime power moduli.

\begin{lemma}[{\cite[Lemma~2]{MilicevicZhang2023}}]
\label{kloost-eval}
Let $n\geqslant 2$ and $a\in\mathbb{Z}/p^n\mathbb{Z}$, $b\in(\mathbb{Z}/p^n\mathbb{Z})^{\times}$. Then,  $\Kl_{p^n}(a,b)=0$ if $ab\not\in (\mathbb Z/p^n\mathbb Z)^{\times 2}$. Otherwise, if $ab\in (\mathbb Z/p^n\mathbb Z)^{\times 2}$,
\[ \Kl_{p^n}(a,b)=2\left(\frac{(ab)_{1/2}}{p^n}\right)\mathrm{Re}\big[\varepsilon_{p^n}e_{p^n}\big(2(ab)_{1/2}\big)\big], \]
where $(\cdot)_{1/2}$ refers to the square root introduced in \S\ref{sqroot-section}, $(\cdot/p^n)$ is the Jacobi symbol, and
\[\varepsilon_{p^n}=\begin{cases}
1,&\text{if $2\mid n$ or $p\equiv 1\bmod 4$},\\
i, &\text{if $2\nmid n$ and $p\equiv 3\bmod 4$}.
\end{cases}\]
\end{lemma}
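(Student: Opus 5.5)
We prove Lemma~\ref{kloost-eval} by applying part (2) of Lemma~\ref{statphase-lemma} to the phase of the unnormalized Kloosterman sum
\[ \Kl_{p^n}(a,b)=\sumast_{x\bmod p^n}e\Big(\frac{ax+b\bar x}{p^n}\Big). \]
We take $T=(\mathbb{Z}/p^n\mathbb{Z})^{\times}$, which is invariant under translation by $p\mathbb{Z}/p^n\mathbb{Z}$, set $\kappa_0=1\leqslant\lfloor n/2\rfloor$ (this uses $n\geqslant 2$), and put $f(x)=ax+b\bar x$.

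\emph{Step 1: the Taylor expansion \eqref{diffble-eq2}.} Expanding $\overline{x+p^{\kappa}t}=\bar x\sum_{i\geqslant 0}(-\bar x p^{\kappa}t)^i$ gives
\[ f(x+p^\kappa t)\equiv f(x)+(a-b\bar x^2)p^\kappa t+b\bar x^3p^{2\kappa}t^2\pmod{p^{3\kappa}}, \]
and $p^{3\kappa}$ divides $p^{2\kappa+1}$ since $\kappa\geqslant 1$. Thus \eqref{diffble-eq2} holds with $f_1=a-b\bar x^2$ and $f_2=2b\bar x^3$. Here $f_2$ is a unit because $b$ is a unit.

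\emph{Step 2: the stationary points.} Write $n=2\kappa+\rho$. Lemma~\ref{statphase-lemma}(2) restricts the sum to classes $x_0\bmod p^\kappa$ with $ax_0^2\equiv b\pmod{p^\kappa}$. If $p\mid a$, this congruence has no unit solutions, because $b$ is a unit. Hence the sum vanishes, consistent with $ab\notin(\mathbb{Z}/p^n\mathbb{Z})^{\times2}$. If $a$ is a unit, the condition becomes $x_0^2\equiv \bar a b\pmod{p^\kappa}$. This is solvable exactly when $\bar a b$ (equivalently $ab$) is a square mod $p$, which by Hensel's lemma is the same as being a square mod $p^n$. This settles the vanishing claim. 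When $ab$ is a square, there are exactly two classes $x_0\equiv\pm\bar a(ab)_{1/2}\pmod{p^\kappa}$.

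\emph{Step 3: evaluating the phase.} This is the step needing care. The phase in Lemma~\ref{statphase-lemma}(2) is $f(x_0)-\overline{2f_2(x_0)}f_1(x_0)^2$, and it is independent of the choice of lift $x_0$ modulo $p^n$. We choose the lift $x_0=\pm\bar a(ab)_{1/2}$ exactly modulo $p^n$. Then $f_1(x_0)=0$ and
\[ f(x_0)=\pm\big((ab)_{1/2}+b\,a\,\overline{(ab)_{1/2}}\big)=\pm2(ab)_{1/2}. \]
For the sign factor we have
\[ 2f_2(x_0)=4b\bar x_0^3\equiv \pm4\,b\,a^3\,\overline{(ab)_{1/2}}^{\,3}=\pm4a^2\,\overline{(ab)_{1/2}}. \]
So $\varepsilon(2f_2(x_0),p^\rho)$ involves the Legendre symbol of $\pm(ab)_{1/2}$ when $\rho=1$, and is $1$ when $\rho=0$.

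\emph{Step 4: assembling the terms.} It remains to bookkeep the two conjugate contributions $\pm$. We use $(-1/p)=i^{2\cdot(\iota-1)/2}$, which shows that the $-$ term carries the complex conjugate factor $\overline{\varepsilon}$. The two terms then combine into
\[ p^{n/2}\Big(\frac{(ab)_{1/2}}{p}\Big)^{\rho}\cdot 2\,\mathrm{Re}\big[\varepsilon\, e_{p^n}(2(ab)_{1/2})\big]. \]
Finally, $(\cdot/p)^\rho=(\cdot/p^n)$, and $\varepsilon=i$ exactly when $\rho=1$ and $p\equiv3\bmod4$, which matches $\varepsilon_{p^n}$.

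We also have to match the normalization of $\Kl_{p^n}$ in the statement. If $\Kl_{p^n}$ is normalized by $p^{-n/2}$ as in \eqref{Kl2}, the factor $p^{n/2}$ cancels and the result is exactly the claimed formula.
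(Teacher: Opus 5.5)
Your proof is correct and is essentially the argument the paper has in mind. The paper quotes the lemma from \cite[Lemma~2]{MilicevicZhang2023} as an application of Lemma~\ref{statphase-lemma}(2) together with the square root of \S\ref{sqroot-section}, and your stationary classes $x_0\equiv\pm\bar a(ab)_{1/2}\pmod{p^{\kappa}}$, the phase $\pm2(ab)_{1/2}$, and the pairing of the two terms via $\big(\frac{-1}{p}\big)i^{(\iota-1)/2}=\overline{i^{(\iota-1)/2}}$ all check out. One wording slip: it is $p^{2\kappa+1}$ that divides $p^{3\kappa}$, not the other way round.
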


\subsection{Symmetric polynomials and the recurrence argument}
\label{recurrence-sec}

In this subsection, we collect some important consequences of the theory of symmetric polynomials. For an $\bm{x}=(x_1,x_2,\dots,x_k)\in\mathbb{Z}^k$, consider its power sums and elementary symmetric polynomials
\begin{equation}
\label{elementary-symmetric}
\sigma_m(\bm{x})=\sum_{j=1}^kx_j^m,\quad e_m(\bm{x})=\sum_{1\leqslant j_1<j_2<\dots<j_m\leqslant k}x_{j_1}x_{j_2}\dots x_{j_m}.
\end{equation}
For brevity of notation, in this subsection we sometimes suppress $\bm{x}$ from notation and simply write $\sigma_m=\sigma_m(\bm{x})$ and $e_m=e_m(\bm{x})$.
These can be related either by the familiar recursive formula
\begin{equation}
\label{newton-recurrence}
\sigma_m=\sum_{\mu=1}^k(-1)^{\mu-1}\sigma_{m-\mu}e_{\mu},
\end{equation}
or by the explicit evaluations~\cite[\S I1.2]{Macdonald1995}
\begin{equation}
\label{non-recursive}
\begin{aligned}
e_m&=(-1)^m\sum_{\Sigma_iik_i=m}\prod_{i=1}^m\frac{(-\sigma_i)^{k_i}}{k_i!i^{k_i}},\\
\sigma_m&=(-1)^mm\sum_{\Sigma_i ik_i=m}(\Sigma k_i-1)!\prod_{i=1}^m\frac{(-e_i)^{k_i}}{k_i!}
\end{aligned}
\end{equation}
where, consistent with the definition~\eqref{elementary-symmetric}, $e_m=0$ for $m>k$.

For every $\bm{x}\in(\mathbb{Z}/p^n\mathbb{Z})^k$, let
\[ \Delta(\bm{x})=\min\big(n,\min_{1\leqslant m\leqslant k}\ord_p\sigma_m(\bm{x})\big),\quad \Delta_0(\bm{x})=\min\Big(n,\min_{1\leqslant m\leqslant k}\ord_p\frac{\sigma_m(\bm{x})}{m!}\Big). \]

\begin{lemma}
\label{recurrence-lemma}
For every $k\in\mathbb{N}$, there exist constants $\rho_k(p)\in\mathbb{Z}_{\geqslant 0}$ such that $\rho_k(p)=0$ for all sufficiently large $p\geqslant p_0(k)$, and such that, for every $\bm{x}\in\mathbb{Z}^k$ and every $m>k$:
\begin{enumerate}
\item\label{reclemma-item1} if $\Delta_0(\bm{x})\geqslant 1$, then $\displaystyle\ord_p\frac{\sigma_m(\bm{x})}{m!}\geqslant\Delta_0(\bm{x})-(m-k)(1+\rho_k(p))$;
\item\label{reclemma-item2} if $\Delta(\bm{x})\geqslant\rho_k(p)$, then $\ord_p\sigma_m(\bm{x})\geqslant 2\Delta(\bm{x})-\rho_k(p)$.
\end{enumerate}
 \end{lemma}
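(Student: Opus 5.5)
We argue through Newton's identities \eqref{newton-recurrence}. Fix $\bm{x}\in\mathbb{Z}^k$ and note that $e_\mu=0$ for $\mu>k$. The recurrence therefore expresses every $\sigma_m$ with $m>k$ as a $\mathbb{Z}$-linear combination of $\sigma_{m-1},\dots,\sigma_{m-k}$, with coefficients $\pm e_\mu$. Going the other way, the explicit formula \eqref{non-recursive} expresses each $e_\mu$ ($\mu\leqslant k$) as a polynomial in $\sigma_1,\dots,\sigma_k$ without constant term, with denominators dividing $\prod_i k_i!\,i^{k_i}$. Since $\sum_i ik_i=\mu\leqslant k$, these denominators are divisible only by primes $p\leqslant k$. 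We take $\rho_k(p)$ to be the maximum over $\mu\leqslant k$ and over the partitions of $\mu$ of $\ord_p(\prod_i k_i!\,i^{k_i})$, possibly enlarged by a bounded amount. Then $\rho_k(p)=0$ for $p>k$, and for every $\mu\leqslant k$ each monomial of $e_\mu$ has valuation at least $\sum_i k_i\,\ord_p\sigma_i-\rho_k(p)$.

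For item \eqref{reclemma-item2}, write $\Delta=\Delta(\bm{x})$. The bound on monomials gives $\ord_p e_\mu\geqslant\Delta-\rho_k(p)$ for every $\mu\leqslant k$, since each monomial contains at least one factor $\sigma_i$. We then induct on $m>k$ with the hypothesis $\ord_p\sigma_{m'}\geqslant\Delta$ for all $m'<m$. This holds at the start, because it is the definition of $\Delta$ for $m'\leqslant k$. The recurrence then gives $\ord_p\sigma_m\geqslant\min_\mu(\ord_p\sigma_{m-\mu}+\ord_p e_\mu)\geqslant 2\Delta-\rho_k(p)$. The assumption $\Delta\geqslant\rho_k(p)$ ensures $2\Delta-\rho_k(p)\geqslant\Delta$, so the induction hypothesis propagates. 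One point needs care: when $m-\mu=0$ we have $\sigma_0=k$, which is not small. However, the term with $m-\mu=0$ occurs only when $m=\mu\leqslant k$, which is excluded, so for $m>k$ every index $m-\mu$ is at least $1$.

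For item \eqref{reclemma-item1} we work with the normalized quantities $s_m=\sigma_m/m!$. Dividing \eqref{newton-recurrence} by $m!$ gives
\[ s_m=\sum_{\mu=1}^{k}(-1)^{\mu-1}\frac{(m-\mu)!}{m!}\,s_{m-\mu}\,e_\mu . \]
Two estimates enter here.

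\begin{enumerate}
\item The factor $(m-\mu)!/m!$ is the reciprocal of $(m)_\mu$, which loses at most $\ord_p((m)_\mu)$.
\item In the formula for $e_\mu$, writing $\sigma_i=i!\,s_i$ gives monomials $\prod_i (i!\,s_i)^{k_i}/(k_i!\,i^{k_i})$. These have valuation at least $\Delta_0$ up to a loss of $\rho_k(p)$, so $\ord_p e_\mu\geqslant\Delta_0-\rho_k(p)$.
\end{enumerate}

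Rather than getting a quadratic gain, we aim only for a loss of $1+\rho_k(p)$ per step. We prove by induction on $m>k$ that $\ord_p s_m\geqslant\Delta_0-(m-k)(1+\rho_k(p))$. In each term, the factor $e_\mu$ has valuation at least $\Delta_0-\rho_k(p)\geqslant 1-\rho_k(p)$ (using $\Delta_0\geqslant 1$), and $s_{m-\mu}$ satisfies the inductive bound.

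The step I expect to be the main obstacle is controlling the valuation of $(m)_\mu$, which grows with $m$ in a way that is not uniformly bounded. Here I would use the actual structure rather than the crude bound. One option is to regroup $e_\mu\,\sigma_{m-\mu}/m!$ so that the factorials combine into binomial-type integers $\binom{m}{\mu}$-like ratios. Alternatively, I would enlarge $\rho_k(p)$ to absorb the $p$-adic valuation of $\mu!$, and then use that $(m)_\mu/\mu!=\binom m\mu\in\mathbb{Z}$ together with $\ord_p\sigma_{m-\mu}\geqslant\ord_p s_{m-\mu}+\ord_p(m-\mu)!$. With this, the per-step loss is bounded by $1+\rho_k(p)$: the $1$ comes from the extra valuation of $e_\mu$ being only $\geqslant 1$ in the worst case, and $\rho_k(p)$ from the denominators. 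A careful check of exactly this bookkeeping, namely that $\Delta_0\geqslant1$ supplies the needed unit of valuation at each step, is where the work lies.
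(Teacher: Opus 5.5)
Your argument for item \eqref{reclemma-item2} is correct and is the paper's. You take $\rho_k(p)$ to bound the denominators in the first formula of \eqref{non-recursive}, which gives $\ord_p e_\mu\geqslant\Delta-\rho_k(p)$. You then induct through \eqref{newton-recurrence}, using $m-\mu\geqslant1$ for $m>k$.

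Item \eqref{reclemma-item1}, however, is not proved. The induction on $s_m=\sigma_m/m!$ you set up cannot be closed, and that is exactly the step you leave open. From the inductive bound on $s_{m-\mu}$ and $\ord_p e_\mu\geqslant\Delta_0$, the $\mu$-th term of your recurrence has valuation at least $2\Delta_0-(m-\mu-k)^{+}(1+\rho_k(p))-\ord_p(m)_\mu$. For $m-\mu\geqslant k$, closing the step therefore requires $\ord_p(m)_\mu\leqslant\Delta_0+\mu(1+\rho_k(p))$. This fails for $\mu=1$ and $m=p^N$ with $N$ large: the linear hypothesis at $m-1$ cannot absorb a jump of size $\ord_p m$ in $\ord_p m!$.

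Your proposed remedy runs the wrong way. Regrouping gives $\sigma_{m-\mu}e_\mu/m!=s_{m-\mu}\,(e_\mu/\mu!)\,\binom{m}{\mu}^{-1}$, and integrality of $\binom m\mu$ bounds the valuation of this term from above, not below. Also, the $1$ in $1+\rho_k(p)$ is not supplied by $\Delta_0\geqslant1$; it pays for the growth $\ord_p m!<m/(p-1)\leqslant m/2$.

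The missing idea is to run the recursion on the integers $\sigma_m$ and divide by $m!$ only once, at the end.
\begin{enumerate}
\item Writing $\sigma_i=i!\,s_i$, each monomial of $e_\mu$ in \eqref{non-recursive} equals $\pm\prod_i((i-1)!)^{k_i}s_i^{k_i}/k_i!$. Since $\ord_p k_i!\leqslant k_i-1$ and $\Delta_0\geqslant1$, its valuation is at least $\sum_{k_i\geqslant1}\big(k_i(\Delta_0-1)+1\big)\geqslant\Delta_0$. Hence $p^{\Delta_0}\mid e_\mu$ for $\mu\leqslant k$. Your weaker bound $\Delta_0-\rho_k(p)$ would also suffice after enlarging $\rho_k(p)$ for small $p$.
\item Since $\sigma_{m-\mu}\in\mathbb{Z}$, \eqref{newton-recurrence} gives $\ord_p\sigma_m\geqslant\Delta_0$ for all $m$.
\item Therefore $\ord_p(\sigma_m/m!)\geqslant\Delta_0-\ord_p m!>\Delta_0-m/(p-1)$. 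For $m\geqslant2k$ and $p\geqslant3$ we have $m/(p-1)\leqslant m/2\leqslant m-k$.
\item The remaining cases ($p=2$, or $k<m<2k$ with $3\leqslant p<2k$) involve finitely many primes and are absorbed by choosing $\rho_k(p)$ suitably for them.
\end{enumerate}
The point is that $\ord_p m!$ is controlled only in aggregate, never per step of the recurrence.
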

 
\begin{proof}
We begin with item \eqref{reclemma-item1}. First, we claim that $p^{\Delta_0}\mid e_m$ for every $1\leqslant m\leqslant k$. Recall the elementary estimate $\ord_p(k!)<k/(p-1)$. Already every individual factor in the first expression in \eqref{non-recursive} satisfies
\[ \ord_p\frac{(-\sigma_i)^{k_i}}{k_i!i^{k_i}}>k_i\Big(\Delta_0-\frac1{p-1}\Big)=\Delta_0+\Big(\Delta_0(k_i-1)-\frac{k_i}{p-1}\Big), \]
from which the left-hand side is at least $\Delta_0$ and thus $p^{\Delta_0}\mid e_m$ for every $1\leqslant m\leqslant k$. Using this conclusion, the same argument shows that already every individual factor in the second expression in \eqref{non-recursive} is divisible by $p^{\Delta_0}$. From this it follows that
\[ \ord_p\frac{\sigma_m}{m!}\geqslant\Delta_0-\ord_pm!>\Delta_0-\frac{m}{p-1}\geqslant\Delta_0-(m-k)(1+\rho_k(p)) \]
with $\rho_k(p)=0$ for
\[ \text{either}\qquad m\geqslant 2k,\,\,p\geqslant 3\qquad\text{or}\qquad m<2k,\,\,p\geqslant 2k. \]
It remains to cover the cases when ($p=2$ or) $2<p<2k$ and $m<2k$, in which case we may simply take, say, $\rho_k(2)=k$ and $\rho_k(p)=\max\big(0,\max_{k<m<2k}\big\lceil(\ord_pm!)/(m-k)\big\rceil-1\big)$ for $3\leqslant p<2k$.

As for \eqref{reclemma-item2}, we may take $\rho_k(p)$ denote the highest power of $p$ appearing in any of the denominators on the right-hand side of the first expression in \eqref{non-recursive} over all $m\leqslant k$; thus, $\rho_k(p)\geqslant 0$, and $\rho_k(p)=0$ for all $p>k$. Then, of course,
\[ \ord_pe_m(\bm{x})\geqslant\Delta(\bm{x})-\rho_k(p). \]
The claim that $\ord_p\sigma_m(\bm{x})\geqslant 2\Delta(\bm{x})-\rho_k(p)\geqslant\Delta(\bm{x})$ now follows by induction on $m$ and applying the recurrence relation \eqref{newton-recurrence}. In fact, with just a little more care, induction shows the stronger inequality
\[ \ord\nolimits_p\sigma_m(\bm{x})\geqslant\Delta(\bm{x})+\left\lfloor\frac {m-1}k\right\rfloor(\Delta(\bm{x})-\rho_k(p)). \qedhere \]
\end{proof}

A useful variation of Lemma~\ref{recurrence-lemma} is the following result. For any two tuples $\bm{a},\bm{b}\in(p\mathbb{Z}/p^n\mathbb{Z})^k$ and any $j\geqslant 1$, denote
\[ \Delta_{j}(\bm{a},\bm{b})=\min(n,\ord_p(\sigma_j(\bm{b})-\sigma_j(\bm{a}))), \]
so that $\min_{1\leqslant j\leqslant k}\Delta_j(\bm{a},\bm{b})=\Delta(\bm{a},\bm{b})$ in the notation of Definition~\ref{alignment-partial}.

\begin{lemma}
\label{recurrence-lemma-plus-p}
There exist constants $\rho_k(p)\in\mathbb{Z}_{\geqslant 0}$ such that $\rho_k(p)=0$ for all sufficiently large $p\geqslant p_0(k)$ and such that:
\begin{alignat*}{3}
\Delta_m(\bm{a},\bm{b})&\geqslant\Delta(\bm{a},\bm{b})-\rho_k(p)&\quad&(\bm{a},\bm{b}\in(\mathbb{Z}/p^n\mathbb{Z})^k,\,\,m\in\mathbb{N}),\\
\Delta_m(\bm{a},\bm{b})&\geqslant\min(n,\Delta(\bm{a},\bm{b})+1-\rho_k(p))&\quad&(\bm{a},\bm{b}\in(p\mathbb{Z}/p^n\mathbb{Z})^k,\,\,m>k),\\
\Delta_m(\bm{a},\bm{b})&\geqslant\min(n,\Delta(\bm{a},\bm{b})+m-k-\lfloor\log_pm\rfloor-\rho_k(p))&&(\bm{a},\bm{b}\in(p\mathbb{Z}/p^n\mathbb{Z})^k,\,\,m>k).
\end{alignat*}
\end{lemma}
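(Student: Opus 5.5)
Write $\bm{d}=\bm{b}-\bm{a}$ coordinatewise, or more usefully compare $\sigma_m(\bm{a})$ and $\sigma_m(\bm{b})$ through their elementary symmetric functions. The plan is to run the argument of Lemma~\ref{recurrence-lemma} for the difference of two tuples rather than for a single tuple.

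\textbf{Step 1: the elementary symmetric functions agree.} Put $\Delta=\Delta(\bm{a},\bm{b})$. By Newton's formula \eqref{newton-recurrence}, or the explicit first expression in \eqref{non-recursive}, each $e_\mu$ with $\mu\leqslant k$ is a polynomial in $\sigma_1,\dots,\sigma_\mu$ with rational coefficients. Their denominators involve only primes $\leqslant k$. Hence
\[
e_\mu(\bm{b})-e_\mu(\bm{a})
\]
is a sum of differences of monomials in the power sums. Each such difference telescopes into terms carrying a factor $\sigma_i(\bm{b})-\sigma_i(\bm{a})$, so it is divisible by $p^{\Delta}$ up to the denominators. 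Thus
\[
\ord_p\big(e_\mu(\bm{b})-e_\mu(\bm{a})\big)\geqslant\Delta-\rho_k(p)\qquad(1\leqslant\mu\leqslant k),
\]
with $\rho_k(p)$ the maximal $p$-adic valuation of those denominators.

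\textbf{Step 2: the first inequality.} The recurrence \eqref{newton-recurrence} for $m>k$ has integer coefficients. Inducting on $m$, write
\[
\sigma_m(\bm{b})-\sigma_m(\bm{a})=\sum_{\mu=1}^k(-1)^{\mu-1}\Big[\big(\sigma_{m-\mu}(\bm{b})-\sigma_{m-\mu}(\bm{a})\big)e_\mu(\bm{b})+\sigma_{m-\mu}(\bm{a})\big(e_\mu(\bm{b})-e_\mu(\bm{a})\big)\Big].
\]
This gives $\Delta_m\geqslant\Delta-\rho_k(p)$ for all $m$. The cases $m\leqslant k$ hold by definition, and for $m=0$ the difference is $0$.

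\textbf{Step 3: the case $\bm{a},\bm{b}\equiv 0\bmod p$.} Now $p\mid e_\mu(\bm{a})$, and in fact $p^\mu\mid e_\mu$ for both tuples, and also $p^{m-\mu}\mid\sigma_{m-\mu}(\bm{a})$. In the displayed identity the first bracket gains a factor $p^{\mu}$ from $e_\mu(\bm{b})$. The second bracket gains $p^{m-\mu}$ from $\sigma_{m-\mu}(\bm{a})$, valid whenever $m-\mu\geqslant 1$, which holds since $m>k\geqslant\mu$.

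Inducting, with the inductive hypothesis on the lower differences, the first term has valuation at least $\min(n,\Delta_{m-\mu})+\mu$. The second term has valuation at least $\Delta-\rho_k(p)+(m-\mu)$.

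For the second displayed bound this already gives at least $\Delta+1-\rho_k(p)$. I would absorb into $\rho_k$ the loss coming from the base cases $m-\mu\leqslant k$, where $\Delta_{m-\mu}\geqslant\Delta$ holds directly.

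\textbf{Step 4: the third inequality (main obstacle).} The growth $m-k-\lfloor\log_pm\rfloor$ is where the difficulty lies. The recurrence with $e_\mu$ divisible by $p^\mu$ tends to give only gains of about $m-k$ minus denominator losses, and a naive induction does not track the exponent cleanly.

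Instead I would use the direct formula
\[
\sigma_m=(-1)^m m\sum(\Sigma k_i-1)!\prod\frac{(-e_i)^{k_i}}{k_i!}
\]
from \eqref{non-recursive}. Take the difference of the $\bm{b}$- and $\bm{a}$-versions of each monomial and telescope it so that exactly one factor $e_i(\bm{b})-e_i(\bm{a})$ appears. The remaining factors contribute $p^{\sum ik_i-i}=p^{m-i}\geqslant p^{m-k}$.

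The multinomial coefficient $m(\Sigma k_i-1)!/\prod k_i!$ is an integer up to the factor $m$. I would bound its potential denominator by $p^{\lfloor\log_p m\rfloor}$, using the standard fact that
\[
\frac{(\Sigma k_i)!}{\prod k_i!}\cdot\frac{m}{\Sigma k_i}
\]
has valuation at least $-\ord_p(\Sigma k_i)\geqslant-\lfloor\log_pm\rfloor$.

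Combining with Step 1 gives
\[
\Delta_m\geqslant\Delta-\rho_k(p)+m-k-\lfloor\log_pm\rfloor,
\]
capped at $n$. This also reproves the second inequality for large $m$.
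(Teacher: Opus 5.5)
Your proposal is correct and follows essentially the same route as the paper's proof. You first transfer the congruences to the elementary symmetric functions via the first formula in \eqref{non-recursive}, then use Newton's recurrence \eqref{newton-recurrence} (exploiting $p\mid\sigma_{m-\mu}(\bm{a})$ and $p\mid e_\mu(\bm{b})$ in the $p$-divisible case) for the first two bounds, and finally the telescoped second formula in \eqref{non-recursive}, with $p^i\mid e_i$ and the $-\lfloor\log_pm\rfloor$ bound on the multinomial factor, for the third (in Step 3 no extra absorption into $\rho_k$ is actually needed, since the base cases already give $\Delta_{m-\mu}\geqslant\Delta$).
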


\begin{proof}
Write for brevity $\Delta_k=\Delta(\bm{a},\bm{b})$, so that $\sigma_m(\bm{a})\equiv\sigma_m(\bm{b})\pmod{p^{\Delta_k}}$ for $1\leqslant m\leqslant k$.
As in the proof of Lemma~\ref{recurrence-lemma} and with the same choice of $\rho_k(p)$, from the first expression in \eqref{non-recursive}, we first obtain that
\begin{align*}
e_m(\bm{a})-e_m(\bm{b})
&=(-1)^m\sum_{\sum_iik_i=m}\frac1{\prod_{i=1}^mk_i!i^{k_i}}\bigg(\prod_{i=1}^m(-\sigma_i(\bm{a}))^{k_i}-\prod_{i=1}^m(-\sigma_i(\bm{b}))^{k_i}\bigg)\\
&\equiv 0\pmod{p^{\Delta_k-\rho_k(p)}}\qquad(1\leqslant m\leqslant k),
\end{align*}
and then, using Newton's recurrences \eqref{newton-recurrence}, that also
\[ \sigma_m(\bm{a})\equiv\sigma_m(\bm{b})\pmod{p^{\Delta_k-\rho_k(p)}}\quad (m\in\mathbb{N}). \]
This establishes the first claim of the lemma.

Now, from Newton's recurrences \eqref{newton-recurrence} we have that
\begin{align*}
\sigma_m(\bm{a})-\sigma_m(\bm{b})
&=\sum_{\mu=1}^k(-1)^{\mu-1}\big(\sigma_{m-\mu}(\bm{a})e_{\mu}(\bm{a})-\sigma_{m-\mu}(\bm{b})e_{\mu}(\bm{b})\big)\\
&=\sum_{\mu=1}^k(-1)^{\mu-1}\big(\sigma_{m-\mu}(\bm{a})(e_{\mu}(\bm{a})-e_{\mu}(\bm{b}))+e_{\mu}(\bm{b})(\sigma_{m-\mu}(\bm{a})-\sigma_{m-\mu}(\bm{b})\big),
\end{align*}
and the second claim of the lemma follows by induction, keeping in mind that in this case $p\mid\sigma_{m-\mu}(\bm{a})$ and $p\mid e_{\mu}(\bm{b})$.

On the other hand, from the second expression in \eqref{non-recursive}, we also have that
\begin{gather*}
\sigma_m(\bm{a})-\sigma_m(\bm{b})
=(-1)^mm\sum_{\sum_iik_i=m}\frac{(\Sigma k_i-1)!}{\prod_{i=1}^mk_i!}\delta_{\bm{k}}(\bm{a},\bm{b}),\\
\delta_{\bm{k}}(\bm{a},\bm{b}):=\sum_{j=1}^{m}\Big[\big((-e_j(\bm{a}))^{k_j}-(-e_j(\bm{b}))^{k_j}\big)\prod_{j<u\leqslant m}(-e_u(\bm{a}))^{k_u}\prod_{1\leqslant v<j}(-e_v(\bm{b}))^{k_v}\Big].
\end{gather*}
We note that $p^i\mid e_i(\bm{a}),e_i(\bm{b})$ and so
\begin{align*}
\ord_p\delta_{\bm{k}}(\bm{a},\bm{b})
&\geqslant \Delta_k-\rho_k(p)+\min_{1\leqslant j\leqslant\min(k,m)}\Big(\sum_{u\neq j}uk_u+j(k_j-1)\Big)\\
&\geqslant\Delta_k+m-k-\rho_k'(p)
\end{align*}
for some constants $\rho_k(p)\in\mathbb{Z}_{\geqslant 0}$ such that $\rho_k'(p)=0$ for all sufficiently large $p\geqslant p_0(k)$. The third claim of the lemma follows from this by observing that
\[ \ord_p\frac{(\Sigma k_i-1)!}{\prod_{i=1}^mk_i!}\geqslant\ord_p\binom{\Sigma k_i}{k_1,\dots,k_m}\frac1{\Sigma k_i}\geqslant -\ord_p(\Sigma k_i)\geqslant -\lfloor\log_pm\rfloor. \qedhere \]
\end{proof}

\subsection{Class of phases}
\label{class-subsection}

Our method applies to exponential sums with phases which allow for sufficient application of the method of stationary phase of the previous subsection and are well-controlled by a a specified number of initial terms in a differentiability property of the form like~\eqref{sqroot-diff}. We formalize these requirements in the following definition.

\begin{definition}
\label{Fk-def}
Let $k\geqslant 1$, and let $\rho_k(p)\in\mathbb{Z}_{\geqslant 0}$ be any fixed choice of constants such that $\rho_k(p)=0$ for all sufficiently large primes $p\geqslant p_0(k)$.
Further, let $1+\rho_k(p)=\kappa_0=\kappa_0(F)\leqslant n$, and let $A_F\subseteq\mathbb{Z}/p^n\mathbb{Z}$ be a set invariant under translations by $p^{\kappa_0}\mathbb{Z}/p^n\mathbb{Z}$. We say that a function $F:A_F\to\mathbb{Z}/p^n\mathbb{Z}$ belongs to the class $\mathcal{F}_k(\mathbb{Z}/p^n\mathbb{Z})$ if it satisfies the following two conditions:
\begin{enumerate}
\item\label{Fk-def-item1} There exists a system of functions $\tilde{F}^{(m)}:A_F\to\mathbb{Z}/p^n\mathbb{Z}$ ($0\leqslant m<n$) with $\tilde{F}^{(0)}=F$ such that, denoting $F^{(m)}(x)=m!\tilde{F}^{(m)}(x)$ and $F^{(m+i)}(x)/i!=(m+i)_m\tilde{F}^{(m+i)}(x)$ ($m,i\geqslant 0$), we have that for every $m\in\mathbb{Z}_{\geqslant 0}$, $x\in A_F$, $\kappa\geqslant\kappa_0$, $t\in\mathbb{Z}/p^n\mathbb{Z}$, and $1\leqslant j<n$,
\begin{equation}
\label{diff-like}
F^{(m)}(x+p^{\kappa}t)\equiv\sum_{i=0}^j\frac{F^{(m+i)}(x)}{i!}p^{i\kappa}t^i\pmod{p^{\min((j+1)\kappa,n)}}.
\end{equation}
\item\label{Fk-def-item2} For every $x\in A_F$, denoting by
\[ \Delta_k(F;x)=\min_{1\leqslant i\leqslant k}\min\Big(n,\ord\nolimits_p\frac{F^{(i)}(x)}{i!}\Big)\in\mathbb{Z}, \]
we have that  $\displaystyle p^{\min(n,\Delta_k(F;x)+1)}\mid p^{(i-k)^{+}\kappa_0}\frac{F^{(i)}(x)}{i!}$ for every $i>k$.
\end{enumerate}
For an $F\in\mathcal{F}_k(\mathbb{Z}/p^n\mathbb{Z})$, we also write
\[ \Delta_k(F):=\max_{x\in A_F}\Delta_k(F;x). \]
\end{definition}
We note that the presence of the factor $p^{i\kappa}$ renders the fraction on the right hand side of \eqref{diff-like} unambiguous.

In light of \eqref{sqroot-diff}, for every $m\in\mathbb{Z}$, the function $s_m:(\mathbb{Z}/p^n\mathbb{Z})^{\times 2}\to\mathbb{Z}/p^n\mathbb{Z}$ given by $s_m(x)=x_{1/2}^m$ (for which $\kappa_0=1$ for $p>2$ and $\kappa_0=3$ for $p=2$) satisfies item \eqref{Fk-def-item1} with $s_m^{(i)}=(m/2)_is_{m-2i}$ and item \eqref{Fk-def-item2} with $\Delta_1(s_m;x)=\ord_pm$, and so $s_m\in\mathcal{F}_1(\mathbb{Z}/p^n\mathbb{Z})$.

As another example, for any two $k$-tuples $\bm{a}=(a_1,\dots,a_k)$ and $\bm{c}=(c_1,\dots,c_k)$ in $\mathbb{Z}/p^n\mathbb{Z}$ such that the domain
\[ X_{\bm{a}}=\big\{x\in\mathbb{Z}/p^n\mathbb{Z}:x+a_i\in(\mathbb{Z}/p^n\mathbb{Z})^{\times 2}\text{ for all }1\leqslant i\leqslant k\big\} \]
is non-empty, consider the function $f_{\bm{c},\bm{a},m}:X_{\bm{a}}\to\mathbb{Z}/p^n\mathbb{Z}$ defined by
\[ f_{\bm{c},\bm{a},m}(x)=\sum_{j=1}^kc_j(x+a_j)_{1/2}^{m}. \]
As above, this satisfies item \eqref{Fk-def-item1} with $f_{\bm{c},\bm{a},m}^{(i)}=(m/2)_if_{\bm{c},\bm{a},m-2i}$.
Considering the polynomial $P_{x,\bm{a}}(X)=\prod_{j=1}^k(X-(x+a_j)^{-1})=\sum_{\ell=0}^k(-1)^{\ell}e_{\ell}X^{k-{\ell}}\in(\mathbb{Z}/p^n\mathbb{Z})[X]$ with $e_0=1$, we have as in \eqref{newton-recurrence} the recurrence relations
\begin{align*}
f_{\bm{c},\bm{a},m-2k}&=\sum_{j=0}^{k-1}(-1)^{k-j-1}e_{k-j}f_{\bm{c},\bm{a},m-2j},\\
f_{\bm{c},\bm{a},m}^{(\ell+k)}&=\sum_{j=0}^{k-1}(-1)^{k-j-1}e_{k-j}(m/2-j-\ell)_{k-j}f_{\bm{c},\bm{a},m}^{(\ell+j)},
\end{align*}
from which it immediately follows that $p^{\Delta_k(f_{\bm{c},\bm{a},m};x)}\mid f_{\bm{c},\bm{a},m}^{(i)}(x)$ for every $i>k$ as well. For $i\geqslant 2k$ and $p\geqslant 3$, using the elementary estimate $\ord_p i!<i/(p-1)\leqslant i/2\leqslant i-k$, we find that
\[ p^{\Delta_k(f_{\bm{c},\bm{a},m};x)+1}\mid p^{(i-k)(1+\rho_k(p))}\frac{f_{\bm{c},\bm{a},m}^{(i)}(x)}{i!} \]
already with $\rho_k(p)=0$. It remains to consider the cases when $p=2$ or $k<i<2k$, in which case the same relation holds with $\rho_k(2)=k$ and
\[ \rho_k(p)=\max_{k<i<2k}\max\Big(0,\Big\lceil\frac{\ord\nolimits_pi!+1}{i-k}-1\Big\rceil\Big) \]
for $p\geqslant 3$.
We see that indeed $\rho_k(p)=0$ for $p\geqslant 2k$, so that $f_{\bm{c},\bm{a},m}\in\mathcal{F}_k(\mathbb{Z}/p^n\mathbb{Z})$. In the proof of Theorem~\ref{main-theorem}, we will encounter phases of the form $f_{\bm{c},\bm{a},1}$ with the additional condition that $\bm{c}\in\{\pm 1\}^k$, in which case we will see that more can be said.

\section{The alignment argument}
\label{alignment-section}

\subsection{Alignment modulo \texorpdfstring{$p$}{p}}
\label{alignment-p-sec}
For any two vectors $\bm{a}=(a_i)\in(\mathbb{Z}/p^n\mathbb{Z})^k$ and $\bm{\epsilon}=(\epsilon_i)\in\{\pm 1\}^k$, we consider the following system of $\ell$ congruences modulo $p^r$:
\begin{equation}
\label{alignment-section-system}
\Sigma_{\bm{\epsilon}}(k,\ell;\bm{a};p^r):\qquad \sum_{i=1}^k\epsilon_i(x+a_i)_{1/2}^{-1-2m}\equiv 0\pmod {p^r}\quad (0\leqslant m\leqslant\ell -1).
\end{equation}

\begin{lemma}
\label{alignment-mod-p-lemma}
For every $k\in\mathbb{N}$, there exists a constant $\rho_k(p)\in\mathbb{Z}_{\geqslant 0}$ such that $\rho_k(p)=0$ for all sufficiently large $p\geqslant p_0(k)$, and such that if the system $\Sigma_{\bm{\epsilon}}(k,\lceil k/2\rceil;\bm{a};p^{1+\rho_k(p)})$ has solutions, then $2\mid k$ and
\[ \bm{a}\equiv\sigma(\bm{a})\pmod p\quad\text{and}\quad \bm{\epsilon}=-\sigma(\bm{\epsilon}) \]
for some product of 2-cycles $\sigma\in S_k$.
\end{lemma}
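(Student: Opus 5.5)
The plan is to reduce the system to a statement about power sums of a single $k$-tuple modulo $p$, and then use Newton's identities to show that this tuple is symmetric under negation. Fix a solution $x$ of $\Sigma_{\bm{\epsilon}}(k,\lceil k/2\rceil;\bm{a};p^{1+\rho_k(p)})$. Put $u_i=(x+a_i)_{1/2}\in(\mathbb{Z}/p^n\mathbb{Z})^{\times}$ and
\[ y_i=\epsilon_iu_i^{-1}\in(\mathbb{Z}/p^n\mathbb{Z})^{\times},\qquad \bm{y}=(y_1,\dots,y_k). \]
Since $\epsilon_i^{2m+1}=\epsilon_i$, the $m$th congruence of the system says exactly that
\[ \sigma_{2m+1}(\bm{y})\equiv 0\pmod{p^{1+\rho_k(p)}}\qquad(0\leqslant m\leqslant\lceil k/2\rceil-1). \]
The odd exponents covered here are $1,3,\dots,2\lceil k/2\rceil-1$. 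Because $2\lceil k/2\rceil-1\geqslant k-1$, these include every odd $j\leqslant k$.

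Next compare $\bm{y}$ with $-\bm{y}=(-y_1,\dots,-y_k)$. For even $j$ we have $\sigma_j(-\bm{y})=\sigma_j(\bm{y})$ identically. For odd $j\leqslant k$ we have $\sigma_j(-\bm{y})-\sigma_j(\bm{y})=-2\sigma_j(\bm{y})\equiv 0\pmod{p^{1+\rho_k(p)}}$. In the notation of Definition~\ref{alignment-partial}, this gives $\Delta(\bm{y},-\bm{y})\geqslant 1+\rho_k(p)$.

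I would then run the first step of the proof of Lemma~\ref{recurrence-lemma-plus-p}, with the same $\rho_k(p)$. That step passes through the first formula in \eqref{non-recursive}, whose denominators $\prod k_i!\,i^{k_i}$ lose at most $p^{\rho_k(p)}$. It yields $e_m(\bm{y})\equiv e_m(-\bm{y})\pmod p$ for $1\leqslant m\leqslant k$. Consequently
\[ \prod_{i=1}^k(X-y_i)\equiv\prod_{i=1}^k(X+y_i)\pmod p \]
as polynomials over $\mathbb{F}_p$. By unique factorization in $\mathbb{F}_p[X]$, the multisets $[y_i\bmod p]$ and $[-y_i\bmod p]$ coincide.

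Finally I extract the involution. Each $y_i$ is a unit and $p$ is odd, so $y_i\not\equiv -y_i\pmod p$. For each residue $c$, the multiplicities of $c$ and $-c$ among the $y_i$ are equal. Matching the occurrences of $c$ with those of $-c$ therefore produces a fixed-point-free involution $\sigma\in S_k$, a product of disjoint $2$-cycles, with $y_{\sigma(i)}\equiv -y_i\pmod p$. In particular $2\mid k$.

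Squaring this relation gives $u_{\sigma(i)}^{-2}\equiv u_i^{-2}$, that is, $x+a_{\sigma(i)}\equiv x+a_i\pmod p$, so $\bm{a}\equiv\sigma(\bm{a})\pmod p$. Since the square root was defined through the fixed choice function $s$, equal residues of $x+a_i$ modulo $p$ give $u_{\sigma(i)}\equiv u_i\pmod p$. Then $y_{\sigma(i)}\equiv -y_i$ forces $\epsilon_{\sigma(i)}\equiv-\epsilon_i\pmod p$, hence $\epsilon_{\sigma(i)}=-\epsilon_i$ because $p>2$. This is $\bm{\epsilon}=-\sigma(\bm{\epsilon})$.

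The one step needing care is the passage from power sums to elementary symmetric functions for small primes $p\leqslant k$. There the factorials in Newton's formulas are not invertible, and the extra precision $p^{\rho_k(p)}$ in the hypothesis is exactly what absorbs this loss. For $p>k$ one can take $\rho_k(p)=0$, and the argument is just Newton's identities over $\mathbb{F}_p$.
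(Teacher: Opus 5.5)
Your proposal is correct and follows the paper's proof essentially step for step: the substitution $y_i=\epsilon_i(x+a_i)_{1/2}^{-1}$, the passage through Newton's formulas with a $p^{\rho_k(p)}$ loss to get $\prod(X-y_i)\equiv\prod(X+y_i)\pmod p$, unique factorization in $\mathbb{F}_p[X]$, and the choice function $s$ to fix the signs. The only cosmetic difference is that you build the fixed-point-free involution directly by matching occurrences of $c$ and $-c$. The paper instead takes a permutation $\pi$ with $y_i\equiv -y_{\pi(i)}$, uses $s$ to get $\epsilon_i=-\epsilon_{\pi(i)}$ (so every cycle of $\pi$ has even length), and then splits each cycle into consecutive $2$-cycles.
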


\begin{proof}
Temporarily denoting $y_i=\epsilon_i(x+a_i)_{1/2}^{-1}$,
we may rewrite the given system of congruences $\Sigma_{\bm{\epsilon}}(k,\lceil k/2\rceil;\bm{a};p^{1+\rho_k(p)})$ as
\[ \sum_{i=1}^{k}y_i^{2m+1}\equiv 0\pmod{p^{1+\rho_k(p)}}\quad(0\leqslant m\leqslant \lceil k/2\rceil-1). \]
This further means that
\begin{equation}
\label{power-means-agreement}
\sum_{i=1}^{k}y_i^m\equiv\sum_{i=1}^{k}(-y_i)^m\pmod{p^{1+\rho_k(p)}}\quad(1\leqslant m\leqslant 2\lceil k/2\rceil),
\end{equation}
thus in particular for $1\leqslant m\leqslant k$. According to Newton's formulae~\eqref{newton-recurrence}, the elementary symmetric polynomials
\[ e_m(\bm{y})=\sum_{1\leqslant j_1<j_2<\dots<j_m\leqslant k}y_{j_1}y_{j_2}\cdots y_{j_m} \]
can be expressed as polynomials in the power means $s_{m'}(\bm{y})=\sum_{i=1}^{k}y_i^{m'}$ ($m'\leqslant m$), with rational coefficients which (in particular whose denominators) depend on $k$ only. Therefore, for $\rho_k(p)\geqslant 0$ sufficiently large in terms of $k$ and such that $\rho_k(p)=0$ for all sufficiently large $p$ (namely, for those $p$ not appearing at all in the denominators of any of the $k$ polynomials under consideration), the congruences \eqref{power-means-agreement} imply that
\[ e_m(\bm{y})\equiv e_m(-\bm{y})\pmod p\quad (1\leqslant m\leqslant k). \]
This conclusion implies that
\[ \prod_{i=1}^{k}(X-y_i)=\prod_{i=1}^{k}(X+y_i)\quad\text{as polynomials in }(\mathbb{Z}/p\mathbb{Z})[X], \]
and therefore by unique factorization there exists a permutation $\pi\in S_{k}$ such that
\begin{equation}
\label{alignment-yi}
y_i\equiv -y_{\pi(i)}\pmod p\quad (1\leqslant i\leqslant k).
\end{equation}

Now, going back to the definition of $y_i=\epsilon_i(x+a_i)_{1/2}^{-1}$, we recall that the definition of $s(x)=x_{1/2}$ in \S\ref{sqroot-section} precludes the possibility that $x_{1/2}\equiv -x'_{1/2}\pmod p$ for any $x,x'\in(\mathbb{Z}/p^n\mathbb{Z})^{\times 2}$ (since this would imply $x\equiv x'\pmod p$ and therefore $x_{1/2}\equiv s(x)\equiv s(x')\equiv x'_{1/2}\pmod p$). Putting everything together, the alignment \eqref{alignment-yi} is only possible if all cycles of $\pi$ are of even length (in particular, $2\mid k$) and, for every $1\leqslant i\leqslant k$,
\[ \epsilon_i=-\epsilon_{\pi(i)},\quad \epsilon_iy_i=(x+a_i)_{1/2}^{-1}\equiv -(-(x+a_{\pi(i)})_{1/2}^{-1})\pmod p. \]
This means that also $a_i\equiv a_{\pi(i)}\pmod p$. Writing $\pi$ as the product of disjoint cycles $(a_{i_1}a_{i_2}\cdots a_{i_{2\ell-1}}a_{i_{2\ell}})$, the statement of the lemma follows by taking $\sigma$ to be the corresponding product of products of 2-cycles $(a_{i_1}a_{i_2})\cdots(a_{i_{2\ell-1}}a_{i_{2\ell}})$.
\end{proof}

We note that a substantially more straightforward argument using the Vandermonde determinant suffices to show that the same conclusion follows if the system $\Sigma_{\bm{\epsilon}}(k,k;\bm{a};p^{1+\rho_k(p)})$ is consistent (with a suitable choice of $\rho_k(p)\in\mathbb{Z}_{\geqslant 0}$ such that $\rho_k(p)=0$ for all $p\geqslant p_0(k)$).

\subsection{Comparison of measures of alignment}
In this section, we show that the alignment \eqref{symm-poly-alignment} (essentially an alternative expression of $\Delta=\Delta(\bm{a},\bm{b})$ forces a pairwise alignment of two tuples $\bm{a}$ and $\bm{b}$ at least modulo $p^{\Delta^0}$ with $\Delta^0\geqslant\Delta/k$.

\begin{lemma}
\label{1-over-k}
Suppose that for two tuples $\bm{a},\bm{b}\in(\mathbb{Z}/p^n\mathbb{Z})^k$ we have the congruence of polynomials
\begin{equation}
\label{symm-poly-alignment}
(X-a_1)(X-a_2)\cdots (X-a_k)\equiv (X-b_1)(X-b_2)\cdots(X-b_k)\pmod{p^{\Delta}}
\end{equation}
for some $1\leqslant\Delta\leqslant n$. Then there exists a permutation $\sigma\in S_k$ such that
\[ \bm{b}\equiv\sigma(\bm{a})\pmod{p^{\lceil\Delta/k\rceil}}. \] 
\end{lemma}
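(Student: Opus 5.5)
The plan is to evaluate the polynomial identity \eqref{symm-poly-alignment} at the roots $a_i$ and induct on $k$ by peeling off one matched pair at a time. Write $P_{\bm a}(X)=\prod_i(X-a_i)$ and $P_{\bm b}(X)=\prod_j(X-b_j)$. Since \eqref{symm-poly-alignment} is a congruence of polynomials with coefficients in $\mathbb{Z}/p^n\mathbb{Z}$, we may substitute $X=a_1$ (any lift to $\mathbb{Z}$). This gives
\[ \prod_{j=1}^k(a_1-b_j)\equiv P_{\bm a}(a_1)=0\pmod{p^{\Delta}}, \]
so $\sum_j\ord_p(a_1-b_j)\geqslant\Delta$ (with the convention that $\ord_p$ is capped at $n$). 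By pigeonhole, some $j$ satisfies $\ord_p(a_1-b_j)\geqslant\lceil\Delta/k\rceil$.

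This settles the first match, but a naive induction loses too much. After matching $a_1$ with $b_j$, dividing out $(X-a_1)$ and $(X-b_j)$ does not preserve the congruence modulo $p^\Delta$. Set $\bm a'=(a_2,\dots,a_k)$ and $\bm b'$ likewise, and let $d=\ord_p(a_1-b_j)$. Then
\[ (X-a_1)\bigl(P_{\bm a'}-P_{\bm b'}\bigr)\equiv (a_1-b_j)P_{\bm b'}(X)\pmod{p^\Delta}. \]
Since $X-a_1$ is monic, this yields $P_{\bm a'}\equiv P_{\bm b'}\pmod{p^{\min(\Delta,d)}}$, which is weaker than we want. To avoid this loss, I will instead match simultaneously through a bipartite-graph argument rather than sequential division.

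Define the bipartite graph $G$ on $\{a_i\}\sqcup\{b_j\}$ with an edge $a_i\sim b_j$ whenever $a_i\equiv b_j\pmod{p^{e}}$, where $e=\lceil\Delta/k\rceil$. The goal is a perfect matching, which by Hall's theorem requires $|N(S)|\geqslant|S|$ for every $S\subseteq\{a_i\}$. The cleaner route is to group by residue classes modulo $p^e$. For a class $c\bmod p^e$, let $\alpha_c=\#\{i:a_i\equiv c\}$ and $\beta_c=\#\{j:b_j\equiv c\}$. Since congruence modulo $p^e$ is an equivalence relation, a perfect matching exists if and only if $\alpha_c=\beta_c$ for every $c$.

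To show $\alpha_c=\beta_c$, suppose instead that $\alpha_c>\beta_c$ for some $c$, and consider the root $a_i$ in class $c$ that maximizes $N(x)=\sum_{i}\ord_p(x-a_i)$. Comparing $\ord_p P_{\bm a}(x)$ with $\ord_p P_{\bm b}(x)$ requires choosing a good test point $x$. Rather than evaluating at $a_i$ directly, I will use $x=c+p^{e'}t$ for suitable $t$ and let the depths vary. Concretely, I will compare the Newton polygon of $P_{\bm a}(c+p^{s}Y)$ and $P_{\bm b}(c+p^{s}Y)$ for $s$ running from $0$ to $e-1$. The number of roots in the disc $|x-c|\leqslant p^{-s}$ is read off from the lowest-degree slope segment, and a congruence modulo $p^\Delta$ with $\Delta>k(s)$ forces the two Newton polygons to agree in all vertices of height below $\Delta$. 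Since the vertex determining the disc count has height at most $\sum\min(s,\cdot)\leqslant ks<\Delta$ for $s\leqslant e-1$, the counts agree, which gives $\alpha_c=\beta_c$ at level $s=e-1$, and hence (adjusting indices) at modulus $p^e$ itself.

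The main obstacle is this final Newton-polygon bookkeeping. One must verify that the disc-count vertex lies at height strictly below $\Delta$, which is exactly the condition $k(e-1)<\Delta$, i.e.\ $e-1<\Delta/k$ and thus $e\leqslant\lceil\Delta/k\rceil$. One must also handle the $p^n$ truncation carefully, working with lifts to $\mathbb{Z}_p$ so that Newton polygons make sense.
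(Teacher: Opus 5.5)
Your reduction to class counts is correct: the bipartite graph is a disjoint union of complete bipartite graphs $K_{\alpha_c,\beta_c}$, so a permutation exists iff $\alpha_c=\beta_c$ for every class $c$ modulo $p^e$, where $e=\lceil\Delta/k\rceil$. Rescaling $X=c+p^sY$ and comparing minimal coefficient valuations is also the right mechanism; it is what the paper does.

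But the final inference, ``$\alpha_c=\beta_c$ at level $s=e-1$, and hence (adjusting indices) at modulus $p^e$'', is a genuine gap, and it is exactly the one level of precision the lemma is about. The closed-disc count $|x-c|\leqslant p^{-s}$ is $\#\{i:a_i\equiv c\pmod{p^s}\}$. So at $s=e-1$ you only get agreement of class counts modulo $p^{e-1}$, i.e.\ the lemma with $\lceil\Delta/k\rceil-1$. You cannot pass to $s=e$: the relevant height becomes $\sum_i\min(e,\ord_p(c-a_i))$, which can reach $ke\geqslant\Delta$. Already for $\Delta\leqslant k$ (so $e=1$) your argument at $s=0$ is empty, since all $k$ roots lie in the unit disc, while the lemma asserts $\bm{b}\equiv\sigma(\bm{a})\pmod p$. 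And if all $a_i\equiv c\pmod p$, then $P_{\bm{a}}(c+pY)$ has content $p^k$, which is not below $p^{\Delta}$.

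What is missing is that the same scale $s=e-1$ also sees the \emph{open} disc. Put $F(Y)=P_{\bm{a}}(c+p^{e-1}Y)$ and $G(Y)=P_{\bm{b}}(c+p^{e-1}Y)$ (integer lifts), and $\mu=\sum_i\min(e-1,\ord_p(c-a_i))\leqslant k(e-1)<\Delta$. Then $p^{-\mu}F\bmod p$ is a nonzero constant times $\prod_{i:\,a_i\equiv c\bmod p^{e-1}}\big(Y-\overline{(a_i-c)/p^{e-1}}\big)$. Hence the smallest index at which a coefficient of $F$ has valuation exactly $\mu$ equals $\#\{i:a_i\equiv c\pmod{p^e}\}=\alpha_c$; the largest such index gives your closed-disc count. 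Since $F\equiv G\pmod{p^{\Delta}}$ and $\mu<\Delta$, both polynomials have the same minimal valuation and the same set of indices attaining it, so $\alpha_c=\beta_c$. Equivalently, $p^{-\mu}F\equiv p^{-\mu}G\pmod p$, and unique factorization in $(\mathbb{Z}/p\mathbb{Z})[Y]$ finishes, which is the paper's step.

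Two smaller points. The ``lowest-degree'' segment of the Newton polygon counts only the roots of maximal valuation; the disc counts sit at the two ends of the minimal-height segment. Also, a congruence modulo $p^{\Delta}$ does not in general preserve all vertices below height $\Delta$, since points above height $\Delta$ can change the hull. It does preserve the minimal height and the indices attaining it, which is all you need.

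With this repair your argument is a one-shot version of the paper's proof. The paper inducts on $m<\Delta/k$: at each level it rescales around a class modulo $p^m$, bounds the extracted power by $(m-1)k+|J|<\Delta$, and uses unique factorization modulo $p$ to pass from $p^m$ to $p^{m+1}$. Working directly at scale $p^{e-1}$ shows that this induction is not actually needed.
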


\begin{proof}
We prove by induction on $1\leqslant m\leqslant\lceil\Delta/k\rceil$ that there exists a permutation $\sigma_m\in S_k$ such that $\bm{b}\equiv\sigma_m(\bm{a})\pmod{p^m}$. For $m=1$, the claim follows from unique factorization of polynomials over the field $\mathbb{Z}/p\mathbb{Z}$.

Suppose the claim has been proved for some $m<\Delta/k$. By relabeling, we may assume that $\bm{b}\equiv\bm{a}\pmod{p^m}$, and thus there exist a partition $[k]=\bigsqcup_{i=1}^rJ_i$ into $1\leqslant r\leqslant p^m$ disjoint non-empty sets and representatives $x_i$ ($1\leqslant i\leqslant r$) such that
\[ a_j,b_j\equiv x_i\pmod{p^m}\quad(j\in J_i) \]
and such that $x_i\neq x_{i'}\pmod{p^m}$ for every $i\neq i'$. We fix an $1\leqslant i\leqslant r$, denote $J=J_i$, and substitute $X=x_i+p^mY$ into \eqref{symm-poly-alignment}: this yields
\begin{align*}
&\prod_{j\in J}(x_i-a_j+p^mY)\prod_{j\not\in J}(x_i-a_j+p^mY)\\
&\qquad\qquad\equiv \prod_{j\in J}(x_i-b_j+p^mY)\prod_{j\not\in J}(x_i-b_j+p^mY)\pmod{p^{\Delta}}.
\end{align*}
We substitute $a_j=x_i+p^m\alpha_{j_i}$ and $b_j=x_i+p^m\beta_{j_i}$ for $j\in J$, factor out $p^m$ from each of these $|J|$ factors and power $p^{\ord_p(x_i-a_j)}$ from each of the remaining $k-|J|$ factors, and extract the highest power of $p$ dividing each side of the above congruence. This highest power on the left- and right-hand side satisfies
\[ \Delta_m^{-,l}\Delta_m^{-,r}\leqslant m|J|+(m-1)(k-|J|)<\Delta-k+|J|\leqslant\Delta, \]
so we may divide the congruence by $p^{\Delta_m^{-}}$ with $\Delta_m^{-}=\min(\Delta_m^{-,l},\Delta_m^{-,r})<\Delta$, and then reduce the remaining congruence modulo $p^{\Delta-\Delta_m^{-}}$ to modulus $p$. The remaining congruence modulo $p$ is of degree $|J|$; by  comparing the leading coefficients, we find that in fact $\Delta_m^{-,l}=\Delta_m^{-,r}$, the leading coefficients agree modulo $p$, and
\[ \prod_{j\in J}(Y-\alpha_j)\equiv\prod_{j\in J}(Y-\beta_j)\pmod p. \]
By the unique factorization of polynomials over the field $\mathbb{Z}/p\mathbb{Z}$, this implies that there exists a permutation $\sigma_{m+1,J}\in S_J$ such that
\[ \bm{\beta}_{|J}\equiv\sigma_{m+1,J}(\bm{\alpha}_{|J})\pmod p,\quad\text{that is,}\quad
\bm{b}_{|J}\equiv\sigma_{m+1,J}(\bm{a}_{|J})\pmod{p^{m+1}}, \]
where we denote shorthand $\bm{\alpha}_{|J}=(\alpha_j)_{j\in J}$ and similarly for $\bm{\beta}_{|J}$, $\bm{a}_{|J}$, and $\bm{b}_{|J}$.

Combining the resulting permutations $\sigma_{m+1,J_i}$ over all $1\leqslant i\leqslant r$ produces a permutation $\sigma_{m+1}\in S_k$ such that $\bm{b}\equiv\sigma_{m+1}(\bm{a})\pmod{p^{m+1}}$, completing the inductive proof.
\end{proof}

\subsection{A determinant evaluation}
\label{determinant-subsection}

We will need the following explicit determinant evaluation. For $1\leqslant\ell\leqslant k$, consider the $k\times\ell$ rectangular matrix
\begin{align*}
B_k(a,\ell, X)
&=
\left[\binom{a-i+1}j X^{i+j-1}\right]_{\substack{1\leqslant i\leqslant k,\\1\leqslant j\leqslant\ell}}\\
&=\left[
\begin{array}{r@{\,}l @{\quad} r@{\,}l @{\quad} c @{\quad} r@{\,}l}
  \binom{a}{1} & X & \binom{a}{2} & X^2 & \cdots & \binom{a}{\ell} & X^{\ell} \\
  \binom{a-1}{1} & X^2 & \binom{a-1}{2} & X^3 & \cdots & \binom{a-1}{\ell} & X^{\ell+1} \\
  \binom{a-2}{1} & X^3 & \binom{a-2}{2} & X^4 & \cdots & \binom{a-2}{\ell} & X^{\ell+2} \\
  \vdots & & \vdots & & \ddots & \vdots & \\
  \binom{a-k+1}{1} & X^k & \binom{a-k+1}{2} & X^{k+1} & \cdots & \binom{a-k+1}{\ell} & X^{\ell+k-1} \\
\end{array}
\right].
\end{align*}
Given a partition $\bm{k}=(k_1,k_2,\dots,k_m)\in\mathbb{N}^m$ of $k=k_1+k_2+\dots+k_m$, a parameter $a$ (in whatever ring we are working over) and a vector of variables $\bm{X}=(X_1,X_2,\dots,X_m)$, consider the block $k\times k$ matrix
\begin{equation}
\label{block-matrix}
B(a,\bm{k};\bm{X})=\begin{bmatrix} B_k(a,k_1,X_1) & B_k(a,k_2,X_2) &\cdots & B_k(a,k_m,X_m) \end{bmatrix}.
\end{equation}
Then, we have the following evaluation.

\begin{lemma}
\label{determinant-lemma}
The block matrix $B(a,\bm{k};\bm{X})$ defined in \eqref{block-matrix} satisfies
\begin{equation}
\label{dlemma-toprove}
\det B(a,\bm{k};\bm{X})=(-1)^{\sum\limits_{i=1}^m\binom{k_i}2}\binom{a}{k}\binom{k}{k_1,k_2,\dots,k_m}\prod_{i=1}^mX_i^{k_i^2}\prod_{1\leqslant i<j\leqslant m}(X_j-X_i)^{k_ik_j}.
\end{equation}
\end{lemma}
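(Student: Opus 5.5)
Our plan is to treat $\det B(a,\bm{k};\bm{X})$ as a polynomial identity in the independent variables $a,X_1,\dots,X_m$ over $\mathbb{Q}$. It then suffices to pin down its zeros and degrees in each $X_i$ and then compute a single normalizing coefficient. The starting point is the observation that the $j$th column of the block $B_k(a,k_i,X_i)$ is $\frac{X_i^{j}}{j!}\,\partial^{j}$ applied to the vector $v(X)=\big(X^{\,i'-1}\cdot X^{?}\big)$. More precisely, we write $\binom{a-i'+1}{j}X^{i'+j-1}=\frac{X^{i'+j-1}}{j!}\,\frac{d^j}{dY^j}Y^{a-i'+1}\big|_{Y=1}$. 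Equivalently, the entries come from the $j$th derivative in $X$ of the column vector $w(X)=(X^{a}\cdot X^{-(a)}\dots)$. To make this clean, we factor out of row $i'$ the power $X$-independent structure by considering the vector $u(T)=\big(T^{a-i'+1}\big)_{i'=1}^k$. Then column $j$ of block $i$ equals, after multiplying row $i'$ by suitable monomials, $\frac{1}{j!}u^{(j)}(T)$ evaluated with $T$ related to $X_i^{-1}$ (indeed $T^{a-i'+1}$ differentiated $j$ times at $T=X_i^{-1}$ gives $(a-i'+1)_j X_i^{-(a-i'+1-j)}$, and multiplying row $i'$ by $X_i^{a}$-type factors restores $X_i^{i'+j-1}$ up to a factor $X_i^{a}$ depending only on the column). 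Thus, up to explicit monomial factors, $\det B$ becomes a confluent (generalized) Wronskian/Vandermonde determinant of the functions $T^{a},T^{a-1},\dots,T^{a-k+1}$ with derivatives of orders $1,\dots,k_i$ taken at the nodes $T_i=X_i^{-1}$.

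The key steps, in order, are as follows.

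First, we perform the reduction above carefully, recording the monomial prefactors. Row $i'$ will be scaled by $T^{-(a-k+1)}$-type factors, so that we are left with a confluent Vandermonde in the polynomials $T^{k-1},\dots,T^0$ shifted by $T^{a-k+1}$.

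Second, we note that the derivatives start at order $1$ (not $0$). This is why the factor $\binom{a}{k}$ appears. We handle this by the Leibniz rule: we write $T^{a-i'+1}=T^{a-k+1}\cdot T^{k-i'}$ and expand the derivatives. Alternatively, we argue by zeros and degrees. As a polynomial in $X_j$, the determinant vanishes to order $k_ik_j$ at $X_j=X_i$, by the standard confluent argument: we subtract Taylor expansions of block $i$'s columns from block $j$'s columns. It vanishes to order $\sum k_i^2$-compatible order at $X_i=0$, which is visible directly since column $j$ of block $i$ is divisible by $X_i^{j}$ and, after extraction, further Taylor-type cancellation gives the full $X_i^{k_i^2}$. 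Its total degree in the $X$'s equals $\sum_{i}\sum_{j\le k_i}\big(j + \tfrac{k+1}{2}\cdot\big)$, which we check matches $\sum k_i^2+\sum_{i<j}k_ik_j$ exactly. Hence the determinant equals $C(a,\bm{k})\prod X_i^{k_i^2}\prod(X_j-X_i)^{k_ik_j}$.

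Third, we determine $C(a,\bm{k})$. As a polynomial in $a$ it has degree at most $\sum_i\sum_{j\le k_i} j$. However, it vanishes for $a=0,1,\dots,k-1$: then the functions $T^{a-i'+1}$ involve only $k$ exponents in $\{a-k+1,\dots,a\}$, so the derivatives of order $\ge1$ kill the constant term $T^0$ (or, for negative exponents, span a space of dimension $<k$). Together with the leading coefficient in $a$, this yields the factor $\binom{a}{k}$. To get the remaining constant $(-1)^{\sum\binom{k_i}{2}}\binom{k}{k_1,\dots,k_m}$, we specialize to a convenient limit. The cleanest route is to let $X_1\gg X_2\gg\dots$, extract leading terms, and reduce to the single-block case $m=1$. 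In that case $\det B_k(a,k,X)$ is computed by column operations (Pascal's rule $\binom{a-i+1}{j}-\binom{a-i}{j}=\binom{a-i}{j-1}$ applied to rows after scaling by $X$). This should give $(-1)^{\binom k2}\binom{a}{k}X^{k^2}$, and the multinomial then arises from matching block leading terms.

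We expect the main obstacle to be the exact bookkeeping of the constant. In particular, it is not obvious that the degree in $a$ is precisely $k$ after the zero-counting, since naive degree counting gives something larger. If this is not clean, we would instead prove the formula by induction on $k$ via Laplace/row reduction: subtract $X_m\times$(row $i'$) from row $i'+1$, using $\binom{a-i'}{j}-\binom{a-i'+1}{j}=-\binom{a-i'}{j-1}$. This reduces the problem to a matrix of the same shape with $a\to a-1$, one fewer derivative in one block, and an explicit prefactor, from which the identity follows by induction on $(k,\bm{k})$.
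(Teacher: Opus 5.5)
Your route differs from the paper's. The paper evaluates the single block in the more general form $\det[\binom{a-i+1}{b+j}]$ and then inducts on $k$ via Dodgson condensation.

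\textbf{What works.} The first half of your plan is fine in outline. Multilinear expansion gives divisibility by $X_i^{k_i^2}$ and by $(X_j-X_i)^{k_ik_j}$. Also $\det B$ is homogeneous of degree $\binom k2+\sum_i\binom{k_i+1}2=\sum_ik_i^2+\sum_{i<j}k_ik_j$; your count with $\tfrac{k+1}{2}$ is off by $k$. Hence $\det B=C(a,\bm{k})\prod_iX_i^{k_i^2}\prod_{i<j}(X_j-X_i)^{k_ik_j}$.

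\textbf{The gap: $C(a,\bm{k})$ is never determined.}
\begin{itemize}
\item Vanishing at $a=0,\dots,k-1$ (row $a+1$ is then zero) gives only $(a)_k\mid C$. The a priori degree of $C$ in $a$ is $\sum_i\binom{k_i+1}2$, not $k$, so no leading-coefficient argument closes it. This is the obstacle you flag yourself.
\item Pulling $T^{a-k+1}$ out by Leibniz is not triangular when the derivative orders are $1,\dots,k_i$, because order-$0$ terms appear. Indeed, the confluent matrix of $(T^{k-1},\dots,T^0)$ with orders $\geqslant 1$ has a zero row.
\item The fallback $R_{i'+1}-X_mR_{i'}$ simplifies only block $m$. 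In a block $i\neq m$ it turns the entries into $X_i^{i'+j-1}[\binom{a-i'}{j}(X_i-X_m)-\binom{a-i'}{j-1}X_m]$. So the matrix does not keep the shape $B(a-1,\cdot,\cdot)$, and the induction has no footing.
\end{itemize}

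\textbf{The missing idea: differentiate once before factoring.} Take $g(T)=(T^{a-i'+1})_{i'}$ and $T_i=X_i^{-1}$. Then column $j$ of block $i$ is $X_i^a\cdot\frac1j\cdot\frac1{(j-1)!}h^{(j-1)}(T_i)$, where $h=g'=((a-i'+1)T^{a-i'})_{i'}$. The argument then runs as follows.
\begin{itemize}
\item The row factors give $(a)_k=k!\binom ak$, and the column factors $1/j$ give $\prod_i1/k_i!$. So $\binom ak\binom k{k_1,\dots,k_m}$ appears immediately.
\item The common factor $T^{a-k}$ now comes out by a genuinely triangular Leibniz step, as $\prod_iT_i^{(a-k)k_i}$.
\item What remains is the standard confluent Vandermonde of $(T^{k-1},\dots,T^0)$, namely $(-1)^{\binom k2}\prod_{i<j}(T_j-T_i)^{k_ik_j}$.
\item Substituting $T_i=X_i^{-1}$ and using $\binom k2=\sum_i\binom{k_i}2+\sum_{i<j}k_ik_j$ gives exactly the claimed formula. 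Prove it for integers $a\geqslant k$ and extend by polynomiality.
\end{itemize}
This needs no degree count in $a$ and makes the zeros-and-degrees step unnecessary.

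\textbf{Alternative repair.} Your limit $X_1\gg\dots\gg X_m$ can also be made rigorous.
\begin{itemize}
\item The coefficient of the lexicographically leading monomial comes from a single Laplace term. In it, block $i$ occupies rows $r_i+1,\dots,r_i+k_i$, where $r_i=\sum_{j>i}k_j$.
\item The corresponding minor is $(-1)^{\binom{k_i}2}\binom{a-r_i}{k_i}X_i^{k_i^2+k_ir_i}$. Note the shifted parameter $a-r_i$, not $a$.
\item The shuffle sign $(-1)^{\sum_{i<j}k_ik_j}$ matches the sign of $\prod_{i<j}(-X_i)^{k_ik_j}$ on the right-hand side.
\item The product telescopes: $\prod_i\binom{a-r_i}{k_i}=\binom ak\binom k{k_1,\dots,k_m}$.
\end{itemize}
The single-block value you leave at ``should give'' follows from the Vandermonde determinant of $[\binom{x_s}{j}]_{0\leqslant s,j\leqslant\ell}$ with $x_0=0$ and $x_s=a-s+1$.
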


\begin{proof}
We will use the condensation method, whose basic identity (often attributed to C.L.Dodgson) we cite from \cite[Proposition 10]{Krattenthaler1999a}. For an $n\times n$ matrix $A$, denoting by $A_{i_1,\dots,i_k}^{j_1,\dots,j_k}$ the submatrix in which rows $i_1,\dots,i_k$ and $j_1,\dots,j_k$ have been removed, we have the identity
\begin{equation}
\label{condensation}
\det A\cdot\det A_{1,n}^{1,n}=\det A_1^1\cdot\det A_n^n-\det A_1^n\cdot\det A_n^1.
\end{equation}

We first address the case $m=1$, when there is only one block, and we evaluate more generally that, for $b\in\mathbb{Z}_{\geqslant 0}$, the determinant of the matrix
\[ M_k(a,b)=\left[\binom{a-i+1}{b+j}\right]_{1\leqslant i,j\leqslant k}\]
satisfies
\begin{equation}
\label{claimed-det-m1}
\det M_k(a,b)=(-1)^{\binom k2}\prod_{c=0}^{b}\binom{a-c}k\binom{c+k}k^{-1}=(-1)^{\binom{k}2+k(b+1)}\prod_{c=0}^b\frac{(a-c)_k}{(-c-1)_k}.
\end{equation}
Denoting the right-hand side of the claimed equality \eqref{claimed-det-m1} by $m_k(a,b)$, we see that
\begin{align*}
\frac{m_{k+1}(a,b)}{m_k(a,b)}&=(-1)^{k+b+1}\prod_{c=0}^b\frac{a-c-k}{-c-1-k}=(-1)^{k+b+1}\frac{(a-k)_{b+1}}{(-k-1)_{b+1}},\\
\frac{m_{k+1}(a+1,b)}{m_k(a,b)}&=(-1)^{k+b+1}\prod_{c=0}^b\frac{a+1-c}{-c-1-k}=(-1)^{k+b+1}\frac{(a+1)_{b+1}}{(-k-1)_{b+1}},
\end{align*}
from which it follows that
\begin{align*}
&\frac{m_{k-1}(a-1,b+1)m_{k-1}(a,b)-m_{k-1}(a-1,b)m_{k-1}(a,b+1)}{m_k(a,b)m_{k-2}(a-1,b+1)}\\
&\qquad =\frac{(a-k+1)_{b+2}}{(-k+1)_{b+2}}\frac{(-k)_{b+1}}{(a-k+1)_{b+1}}-\frac{(a)_{b+2}}{(-k+1)_{b+2}}\frac{(-k)_{b+1}}{(a)_{b+1}}
=\frac{-k+1}{-k+1}=1.
\end{align*}
Now, the claim \eqref{claimed-det-m1} is true for $k=1$ and $k=2$ by direct verification, and then it follows by induction as \eqref{condensation} shows that indeed $\det M_k(a,b)$ equals
\begin{align*}
&\frac{\det M_{k-1}(a-1,b+1)\det M_{k-1}(a,b)-\det M_{k-1}(a-1,b)\det M_{k-1}(a,b+1)}{\det M_{k-2}(a-1,b+1)}\\
&\qquad=\frac{m_{k-1}(a-1,b+1)m_{k-1}(a,b)-m_{k-1}(a-1,b)m_{k-1}(a,b+1)}{m_{k-2}(a-1,b+1)}=m_k(a,b).
\end{align*}
From \eqref{claimed-det-m1}, the claim \eqref{dlemma-toprove} for $m=1$ follows from $\det B(a,(k);(X))=X^{k^2}\det M_k(a,0)=(-1)^{\binom k2}\binom ak X^{k^2}$.

To establish the general case, we proceed by induction on $k$, the base case $k=1$ being trivial. For the inductive step, we may assume that $m\geqslant 2$, since the theorem is already proved for $m=1$. In this case, denote 
\[ \bm{k}''=(\dots,k_{m-1}-1,k_m-1),\quad \bm{k}'{}^{\circ}=(\dots,k_{m-1}-1,k_m),\quad \bm{k}^{\circ}{}'=(\dots,k_{m-1},k_m-1). \]
 We apply the condensation equality \eqref{condensation} with rows $1$ and $k$ and the final columns of the blocks $(m-1)$ and $m$ (say, columns $k'$ and $k$). Doing so and denoting $B=B(a,\bm{k},\bm{X})$, we encounter determinants
\begin{equation}
\label{recursive-dets}
\begin{alignedat}{3}
\det B_1^{k'}&=XX_m\det B(a-1,\bm{k}'{}^{\circ},\bm{X}), &\qquad \det B_k^k&=\det B(a,\bm{k}^{\circ}{}',\bm{X}),\\
\det B_1^k&=XX_{m-1}\det B(a-1,\bm{k}^{\circ}{}',\bm{X}),& \det B_k^{k'}&=\det B(a,\bm{k}'{}^{\circ},\bm{X}),\\
\det B_{1,k}^{k',k}&=X\det B(a-1,\bm{k}'',\bm{X}),
\end{alignedat}
\end{equation}
where we also denoted shorthand $X=\prod_{i=1}^{m-2}X_i^{k_i}\cdot X_{m-1}^{k_{m-1}-1} X_m^{k_m-1}$.
Denoting by $b(a,\bm{k},\bm{X})$ the right-hand side of the claimed equality \eqref{dlemma-toprove}, we find
\begin{align*}
\frac{b(a-1,\bm{k}'{}^{\circ},\bm{X})}{b(a-1,\bm{k}'',\bm{X})}&=(-1)^{k_m-1}\frac{a-k+1}{k_m}X_m^{2k_m-1}\prod_{i=1}^{m-2}(X_m-X_i)^{k_i}(X_m-X_{m-1})^{k_{m-1}-1},\\
\frac{b(a,\bm{k},\bm{X})}{b(a,\bm{k}'{}^{\circ},\bm{X})}&=(-1)^{k_{m-1}-1}\frac{a-k+1}{k_{m-1}}X_{m-1}^{2k_{m-1}-1}\prod_{i=1}^{m-2}(X_{m-1}-X_i)^{k_i}(X_m-X_{m-1})^{k_{m}},
\end{align*}
and analogously for $b(a-1,\bm{k}^{\circ}{}',\bm{X})/b(a-1,\bm{k}'',\bm{X})$ and $b(a,\bm{k},\bm{X})/b(a,\bm{k}^{\circ}{}',\bm{X})$. Hence,
\[ \frac{b(a-1,\bm{k}'{}^{\circ},\bm{X})b(a,\bm{k}^{\circ}{}',\bm{X})}{b(a,\bm{k},\bm{X})b(a-1,\bm{k}'',\bm{X})}
=\frac{b(a-1,\bm{k}^{\circ}{}',\bm{X})b(a,\bm{k}'{}^{\circ},\bm{X})}{b(a,\bm{k},\bm{X})b(a-1,\bm{k}'',\bm{X})}
=\frac1{X_m-X_{m-1}} \]
Upon applying \eqref{condensation} and \eqref{recursive-dets} and invoking the inductive hypothesis, we indeed find that
\begin{align*}
\det B&=\frac{\det B_1^{k'}\det B_k^k-\det B_1^k\det B_k^{k'}}{\det B_{1,k}^{k',k}}\\
&=\frac{X_mb(a-1,\bm{k}'{}^{\circ},\bm{X})b(a,\bm{k}^{\circ}{}',\bm{X})-X_{m-1}b(a-1,\bm{k}^{\circ}{}',\bm{X})b(a,\bm{k}'{}^{\circ},\bm{X})}
{b(a-1,\bm{k}'',\bm{X})}\\
&=\frac{X_m-X_{m-1}}{X_m-X_{m-1}}b(a,\bm{k},\bm{X})=b(a,\bm{k},\bm{X}).
\end{align*}
This concludes the proof.
\end{proof}

It may well be that there is a more elementary evaluation of the above determinant or (even more likely) that it is well-known; however, we could not readily locate a reference and were grateful to learn about this method from the wonderful survey of Krattenthaler~\cite{Krattenthaler1999a}.

\subsection{Alignment in groups}

With Lemma~\ref{determinant-lemma} ready for use, we return to the question of understanding the solutions of the system of congruences
\[\Sigma_{\bm{\epsilon}}(k,\ell;\bm{a};p^r):\qquad \sum_{i=1}^k\epsilon_i(x+a_i)_{1/2}^{-1-2m}\equiv 0\pmod{p^r}\quad (0\leqslant m\leqslant\ell-1). \]
Assuming from now on that
\[ \ell\geqslant\lceil k/2\rceil,\quad 1+\rho_k(p)\leqslant r\leqslant n, \]
Lemma~\ref{alignment-mod-p-lemma} shows that the system $\Sigma_{\bm{\epsilon}}(k,\ell;\bm{a};p^r)$ has no solutions unless we can split the multiset $[a_1,a_2,\dots,a_k]$ into a disjoint union of non-empty sub-multisets according to their residues modulo $p$ and re-index the variables accordingly as $\bm{a}=(a_{ti})$, $\bm{\epsilon}=(\epsilon_{ti})$ with
\begin{equation}
\label{kt-def}
[a_1,a_2,\dots,a_k]=\bigsqcup_{t\in T}[a_{t1},\dots,a_{tk_t}],\quad a_{ti}\equiv t\pmod p\quad (1\leqslant i\leqslant k_t)
\end{equation}
for the set of attained residues $T\subseteq\mathbb{Z}/p\mathbb{Z}$, where $|T|\leqslant k$ and
\begin{equation}
\label{kt-def-splitting}
2\mid k_t,\quad \epsilon_{ti}=\begin{cases} 1,&i\in I_t=\{1,2,\dots,k_t/2\},\\ -1,&i\in I_t'=\{k_t/2+1,k_t/2+2,\dots,k_t\},\end{cases}\quad \begin{gathered}\relax [k_t]=I_t\sqcup I_t',\\|I_t|=|I_t'|=k_t/2.\end{gathered}
\end{equation}
We rewrite our system of congruences $\Sigma_{\bm{\epsilon}}(k,\ell;\bm{a};p^r)$ as
\[ \sum_{t\in T}\sum_{i=1}^{k_t}\epsilon_{ti}(x+a_{ti})_{1/2}^{-1-2m}\equiv 0\bmod{p^r}\quad (0\leqslant m\leqslant\ell-1). \]
Using the expansion \eqref{sqroot-diff}, the above may be rewritten as
\[ \sum_{t\in T}\sum_{i=1}^{k_t}\epsilon_{ti}\sum_{j=0}^n\binom{-1/2-m}{j}(x+t)_{1/2}^{-1-2(m+j)}(a_{ti}-t)^j\equiv 0\bmod{p^r}\quad (0\leqslant m\leqslant\ell-1). \]
Now, for every $t\in T$ and $j\geqslant 0$, denote
\begin{equation}
\label{deltattje-def}
\delta_{tj,\bm{\epsilon}}=\sum_{i=1}^{k_t}\epsilon_{ti}(a_{ti}-t)^j=\sum_{i\in I_t}(a_{ti}-t)^j-\sum_{i\in I_t'}(a_{ti}-t)^j,\quad \Delta_{tj,\bm{\epsilon}}=\ord_p\delta_{tj,\bm{\epsilon}}.
\end{equation}
We are about to invoke Lemma~\ref{recurrence-lemma-plus-p}. Let $\rho_k(p)$ be the values provided by that lemma, and pick once and for all an even integer $k^{\sharp}\in\mathbb{N}$ (sufficiently large and depending on $k$ only) such that
\[ j-(k+3)\lfloor\log_3(2j)\rfloor\geqslant k+\max_{p:\rho_k(p)\neq 0}\rho_k(p)\quad\text{for every }j\geqslant\ k^{\sharp}/2. \]
We now pick $\tilde{k}_t=k_t$ if $\rho_k(p)=0$ and $p>kk^{\sharp}$, and $\tilde{k}_t=k^{\sharp}$ otherwise. We also denote $\tilde{k}_T=\sum_{t\in T}\tilde{k}_t$ and $\tilde{k}=\max_T\tilde{k}_T$; thus, $\tilde{k}=\tilde{k}_T=k$ for all sufficiently large $p$ (namely those with $\rho_k(p)=0$ and $p>kk^{\sharp}\geqslant |T|k^{\sharp}$), and $\tilde{k}_T\leqslant |T|k^{\sharp}$ for all $p$.

The property these choices ensure is as follows. Let $\rho_{k}'(p)=\max_T\ord_p\binom{-1/2}{\tilde{k}_T/2}\binom{\tilde{k}_T/2}{(\tilde{k}_t/2)_{t\in T}}$. Then
\begin{align*}
\rho_{k}'(p)&\leqslant\ord_p(-\tfrac12)_{\tilde{k}_T/2}-\sum_{t\in T}\ord_p\big(\tilde{k}_t/2\big)!\\
&<\frac{\tilde{k}_T}{2(p-1)}-\sum_{t\in T}\frac{\tilde{k}_t}{2(p-1)}+\lfloor\log_p(\tilde{k}_T)\rfloor+\sum_{t\in T}\lfloor\log_p(\tilde{k}_t/2)\rfloor\\
&\leqslant (|T|+2)\lfloor\log_pk^{\sharp}\rfloor
\leqslant j-k-\lfloor\log_pj\rfloor-\rho_k(p)
\end{align*}
for every $j\geqslant k^{\sharp}/2$, as well as $\rho_{k}'(p)=0$ for $p>kk^{\sharp}$.
According to Lemma~\ref{recurrence-lemma-plus-p}, if $\rho_k(p)=0$ and $p>kk^{\sharp}$, then
\begin{equation}
\label{crazy}
\Delta_{t,\bm{\epsilon}}:=\min_{1\leqslant j\leqslant \tilde{k}_t/2}\Delta_{tj,\bm{\epsilon}}=\min_{j\geqslant 1}\Delta_{tj,\bm{\epsilon}},\quad \min_{j>\tilde{k}_t/2}\Delta_{tj,\bm{\epsilon}}>\Delta_{t,\bm{\epsilon}}+\rho_{k}'(p),
\end{equation}
simply because then $\Delta_{tj,\bm{\epsilon}}\geqslant 1+\Delta_{t,\bm{\epsilon}}$ for $j>\tilde{k}_t/2=k_t/2$ and $\rho_{k}'(p)=0$. But \eqref{crazy} holds otherwise as well, because then $\tilde{k}_t=k^{\sharp}$ and so for $j>\tilde{k}_t/2=k^{\sharp}/2$ we have again by Lemma~\ref{recurrence-lemma-plus-p} that
\[ \Delta_{tj,\bm{\epsilon}}\geqslant\Delta_{t,\bm{\epsilon}}+j-k-\lfloor\log_pj\rfloor-\rho_k(p)>\Delta_{t,\bm{\epsilon}}+\rho_{k}'(p). \]

At this point, we note that, while the values of $\delta_{tj,\bm{\epsilon}}$ depend on the choices of representatives $t\in T$ (which we have fixed once and for all), the values of $\Delta_{t,\bm{\epsilon}}$ do not and may even be computed by formally setting $t=0$ in the formulas \eqref{deltattje-def}, so that in fact
\[ \Delta(\bm{a}_t^{I_t},\bm{a}_t^{I_t'})-\rho_k(p)\leqslant\Delta_{t,\bm{\epsilon}}\leqslant \Delta(\bm{a}_t^{I_t},\bm{a}_t^{I_t'}) \]
in the notations of Definitions~\ref{alignment-partial} and \ref{Deltaast-maindef}.

Denoting
\begin{equation}
\label{deltaast-def}
\Delta^{\ast}_{\bm{\epsilon}}:=\min_{t\in T}\Delta_{t,\bm{\epsilon}},\quad \delta_{tj,\bm{\epsilon}}^{\ast}=p^{-\Delta^{\ast}_{\bm{\epsilon}}}\delta_{tj,\bm{\epsilon}},
\end{equation}
our system of congruences implies that
\begin{align*}
\smash[b]{p^{\Delta^{\ast}_{\bm{\epsilon}}}\sum_{t\in T}\sum_{j=1}^{\tilde{k}_t/2}\binom{-1/2-m}{j}(x+t)_{1/2}^{-1-2(m+j)}\delta_{tj,\bm{\epsilon}}^{\ast}}+p^{\Delta^{\ast}_{\bm{\epsilon}}+\rho_{k}'(p)+1}\Xi\equiv{}&0\pmod{p^r}\\
&(0\leqslant m\leqslant\ell-1)
\end{align*}
for some $\Xi,\delta_{tj,\bm{\epsilon}}^{\ast}\in\mathbb{Z}$ such that
\begin{equation}
\label{one-coprime}
\delta_{tj,\bm{\epsilon}}^{\ast}\not\equiv 0\pmod p\quad\text{for at least one }t\in T\text{ and }1\leqslant j\leqslant \tilde{k}_t/2.
\end{equation}

Such a system of congruences is trivially satisfied if $r\leqslant\Delta^{\ast}_{\bm{\epsilon}}$. The following lemma, which is the main result of this subsection, shows that, for $\ell\geqslant k/2+\tilde{\rho}_k(p)$ (with some $\tilde{\rho}_k(p)\in\mathbb{Z}_{\geqslant 0}$ such that $\tilde{\rho}_k(p)=0$ for sufficiently large $p$ as usual), this is in fact the only such case.

\begin{lemma}
\label{alignment-main}
Given $\bm{a}\in(\mathbb{Z}/p^n\mathbb{Z})^k$ and $\bm{\epsilon}\in\{\pm 1\}^k$, define $(k_t)_{t\in T}$ as in \eqref{kt-def}. For every $k\in\mathbb{N}$, there exists a constant $\rho_k(p)\in\mathbb{Z}_{\geqslant 0}$ such that $\rho_k(p)=0$ for all sufficiently large $p\geqslant p_0(k)$, and such that then the system of congruences $\Sigma_{\bm{\epsilon}}(k,\ell;\bm{a};p^r)$ has no solutions if either:
\begin{itemize}
\item $\ell\geqslant\lceil k/2\rceil$, $r>\rho_k(p)$, and $2\nmid k$ or the splitting conditions \eqref{kt-def-splitting} are not satisfied,~or
\item $\ell\geqslant\lceil k/2\rceil+\rho_k(p)$ and $r>\Delta^{\ast}_{\bm{\epsilon}}+\rho_k(p)$, where $\Delta^{\ast}_{\bm{\epsilon}}\in\mathbb{N}$ is defined as in \eqref{deltaast-def}, otherwise.
\end{itemize}
\end{lemma}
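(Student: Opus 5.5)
The first bullet is Lemma~\ref{alignment-mod-p-lemma}. Reducing a solution modulo $p^{1+\rho_k(p)}$ with $\ell\geqslant\lceil k/2\rceil$ forces $2\mid k$ and $\bm{a}\equiv\sigma(\bm{a})\pmod p$, $\bm{\epsilon}=-\sigma(\bm{\epsilon})$ for a product of $2$-cycles $\sigma$. Inside each residue class $t$ the signs must therefore split evenly, which is exactly \eqref{kt-def-splitting} after reindexing. So if $2\nmid k$ or the splitting fails, there is no solution once $r>\rho_k(p)$.

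For the second bullet, assume \eqref{kt-def-splitting} and suppose $x$ solves $\Sigma_{\bm{\epsilon}}(k,\ell;\bm{a};p^r)$ with $r>\Delta^{\ast}_{\bm{\epsilon}}+\rho_k(p)$. Start from the congruence displayed just before the lemma, divide by $p^{\Delta^{\ast}_{\bm{\epsilon}}}$, and reduce modulo $p^{1+\rho_k'(p)}$. This kills the $\Xi$ term and gives, for $0\leqslant m\leqslant\ell-1$,
\[ \sum_{t\in T}\sum_{j=1}^{\tilde{k}_t/2}\binom{-1/2-m}{j}u_t^{1+2(m+j)}\delta^{\ast}_{tj,\bm{\epsilon}}\equiv 0\pmod{p^{1+\rho_k'(p)}},\qquad u_t=(x+t)_{1/2}^{-1}. \]
Take only the first $L=\tilde{k}_T/2$ values of $m$, which is allowed since $\ell\geqslant\lceil k/2\rceil+\rho_k(p)$ (enlarging $\rho_k(p)$ so that this dominates $\tilde{k}_T/2$ for small $p$). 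We then have a square linear system in the unknowns $\delta^{\ast}_{tj,\bm{\epsilon}}$ with $L$ equations and $L$ unknowns. The goal is to show its determinant has $p$-adic valuation at most $\rho_k'(p)$. That forces all $\delta^{\ast}_{tj,\bm{\epsilon}}\equiv 0\pmod p$, contradicting \eqref{one-coprime}.

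The determinant is identified with Lemma~\ref{determinant-lemma}. The coefficient matrix has rows $m+1=i$ and, for each block $t$, columns $j=1,\dots,\tilde{k}_t/2$. Using $\binom{-1/2-m}{j}=\binom{a-i+1}{j}$ with $a=-1/2$, and factoring $u_t$ from each column, the entry becomes $\binom{a-i+1}{j}X_t^{i+j-1}$ with $X_t=u_t^2=(x+t)^{-1}$. This is precisely $B(-1/2,(\tilde{k}_t/2)_{t\in T};\bm{X})$, so its determinant is, up to sign,
\[ \binom{-1/2}{L}\binom{L}{(\tilde{k}_t/2)_{t}}\prod_t X_t^{(\tilde{k}_t/2)^2}\prod_{t<t'}(X_{t'}-X_t)^{\cdots}. \]
Each $X_t$ is a unit. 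Each difference $X_{t'}-X_t=(t-t')/((x+t)(x+t'))$ is a unit because distinct $t\in T$ are distinct modulo $p$. Hence the valuation of the determinant is exactly $\ord_p$ of the binomial prefactor, which is at most $\rho_k'(p)$ by the definition of $\rho_k'(p)$. Inverting the system through its adjugate then gives $p\mid\delta^{\ast}_{tj,\bm{\epsilon}}$ for every $t,j$, the desired contradiction.

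The main obstacle is the bookkeeping of the constants $\rho$. We must ensure that the truncation error term really is $p^{\Delta^{\ast}_{\bm{\epsilon}}+\rho_k'(p)+1}\Xi$; this uses \eqref{crazy} together with the integrality of the coefficients $\binom{-1/2-m}{j}$ for odd $p$, and the terms with $j>n$ vanish. We must also check that $r>\Delta^{\ast}_{\bm{\epsilon}}+\rho_k(p)$ with an enlarged $\rho_k(p)\geqslant\rho_k'(p)$ leaves enough precision. For large $p$ all of these constants are $0$, $\tilde{k}_t=k_t$, and the argument reduces to the clean determinant nonvanishing modulo $p$.
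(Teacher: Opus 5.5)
Your proposal is correct and follows the paper's proof essentially step for step. The first bullet is read off from Lemma~\ref{alignment-mod-p-lemma}. For the second, you reduce a putative solution to the homogeneous $(\tilde{k}_T/2)\times(\tilde{k}_T/2)$ system modulo $p^{1+\rho_k'(p)}$ and identify its matrix, after pulling the unit $(x+t)_{1/2}^{-1}$ out of each column, with the block matrix of Lemma~\ref{determinant-lemma} at $a=-1/2$, $X_t=(x+t)^{-1}$. You then bound the determinant's valuation by $\rho_k'(p)$, since the $X_t$ and $X_{t'}-X_t$ are units, and conclude by Cramer's rule against \eqref{one-coprime}, with $\rho_k(p)$ enlarged to $\max(\rho_k(p),\rho_k'(p),(\tilde{k}-k)/2)$, just as the paper does.
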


\begin{proof}
We have already established the claim in the case when $2\nmid k$ or the splitting conditions \eqref{kt-def-splitting} are not satisfied, so assume henceforth that $2\mid k$ and \eqref{kt-def-splitting} holds, define $\Delta^{\ast}_{\bm{\epsilon}}\in\mathbb{N}$ by \eqref{deltaast-def}, and assume that $r>\Delta^{\ast}_{\bm{\epsilon}}+\rho_{k}'(p)$. In light of the above discussion and \eqref{one-coprime},
every solution of the system $\Sigma_{\bm{\epsilon}}(k,\tilde{k}_T/2;\bm{a};p^r)$ leads to a solution of the homogeneous $(\tilde{k}_T/2)\times (\tilde{k}_T/2)$ system of congruences
\begin{equation}
\label{remaining-system}
\sum_{t\in T}\sum_{j=1}^{\tilde{k}_t/2}\binom{-1/2-m}{j}(x+t)_{1/2}^{-1-2(m+j)}\delta_{tj,\bm{\epsilon}}^{\ast}\equiv 0\!\!\pmod {p^{1+\rho_{k}'(p)}}\quad(0\leqslant m\leqslant\tilde{k}_T/2-1)
\end{equation}
in which not all $\delta_{tj,\bm{\epsilon}}^{\ast}$ are divisible by $p$.

We can recognize the matrix of this system as the block matrix of the form \eqref{block-matrix} studied in \S\ref{determinant-subsection}. It consists of $m=|T|$ blocks indexed by $t\in T$, each of size $(\tilde{k}_T/2)\times(\tilde{k}_t/2)$, and may be expressed in the notations of \S\ref{determinant-subsection} as
\[ B=\bigg[ \Big((x+t)_{1/2}^{-1}B_{\tilde{k}}(-\tfrac12,\tilde{k}_t/2,(x+t)^{-1})\Big)_{t\in T}\bigg]. \]
Using Lemma~\ref{determinant-lemma}, the determinant of the coefficient matrix of the system \eqref{remaining-system} satisfies
\[ |\det B|=\binom{-\tfrac12}{\tilde{k}_T/2}\binom{\tilde{k}_T/2}{(\tilde{k}_t/2)_{t\in T}}\prod_{t\in T}(x+t)^{-\tilde{k}_t^2/4-\tilde{k}_t/2}\prod_{\substack{t,t'\in T\\t\neq t'}}((x+t)^{-1}-(x+t')^{-1})^{\tilde{k}_t\tilde{k}_{t'}/4}, \]
and, in particular, $\ord_p\det B\leqslant\rho_{k}'(p)$. By Cram\'er's formulas, the system of congruences \eqref{remaining-system} modulo $p^{1+\rho_k'(p)}$ satisfies
\[ \delta_{tj,\bm{\epsilon}}^{\ast}\equiv 0\pmod{p}. \]
This contradicts our assumption that at least one $\delta_{tj,\bm{\epsilon}}^{\ast}$ is not divisible by $p$, and the claim follows by adjusting the values $\rho_k(p)$ to $\max(\rho_k(p),\rho_k'(p),(\tilde{k}-k)/2)$ and keeping in mind that $\tilde{k}=k$ for all sufficiently large $p$.
\end{proof}

\section{The stratification argument}
\label{stratification-section}

The main result of this section is Lemma~\ref{split-domain-proposition}, in which we execute the most sensitive part of our argument, which will serve to bound the number and contributions of the nearly stationary points in our exponential sums.

In preparation for this, let $k\in\mathbb{N}$, and let $F$ belong to the class $\mathcal{F}_k(\mathbb{Z}/p^n\mathbb{Z})$ introduced in Definition~\ref{Fk-def}, and recall the parameter $\kappa_0(F)=1+\rho_k(p)$, with constants $\rho_k(p)\in\mathbb{Z}_{\geqslant 0}$ such that $\rho_k(p)=0$ for all $p\geqslant p_0(k)$. For every $\kappa\geqslant\kappa_0(F)$, let
\begin{equation}
\label{Likappa-def}
\begin{alignedat}{5}
f_{\kappa}(x_0,i)&=\min\Big(n,\ord_p\frac{F^{(i)}(x_0)}{i!}+\kappa i\Big),&\quad& &L_{\kappa}(x_0)&=\min_{1\leqslant i\leqslant k}f_{\kappa}(x_0,i),\\
f_{\kappa}^{\flat}(x_0,i)&=\min\Big(n,\ord_pi+\ord_p\frac{F^{(i)}(x_0)}{i!}+\kappa i\Big),&&&
L_{\kappa}^{\flat}(x_0)&=\min_{1\leqslant i\leqslant k}f_{\kappa}^{\flat}(x_0,i),\\
I_{\kappa}^{\flat}(x_0)&=\argmin_{1\leqslant i\leqslant k}f_{\kappa}^{\flat}(x_0,i), &&&i_{\kappa}^{\flat}(x_0)&=\max I_{\kappa}^{\flat}(x_0).
\end{alignedat}
\end{equation}
Now, from the condition that $F\in\mathcal{F}_k(\mathbb{Z}/p^n\mathbb{Z})$, we know that in fact
\[ \argmin_{i\geqslant 1}f_{\kappa}(x_0,i)\subseteq[k],\quad L_{\kappa}(x_0)=\min_{i\geqslant 1}f_{\kappa}(x_0,i). \]
It will also be convenient to denote $r_k(p)=\max_{i\in[k]}\ord_pi$, noting that $r_k(p)=0$ for all $p>k$. With this notation, it follows from part \eqref{Fk-def-item2} of Definition~\ref{Fk-def} that, for $\kappa\geqslant\kappa_0(F)+r_k(p)$ and $L_{\kappa}^{\flat}(x_0)<n$,
\begin{equation}
\label{argmin-from-Fkdef}
\argmin_{i\geqslant 1}f_{\kappa}^{\flat}(x_0,i)\subseteq I_{\kappa}^{\flat}(x_0)\subseteq[k],\quad L_{\kappa}^{\flat}(x_0)=\min_{i\geqslant 1}f_{\kappa}^{\flat}(x_0,i).
\end{equation}

\subsection{Properties of stratification functions}
We begin by summarizing some simple properties of the functions we just defined, which will be of use in our upcoming stratification argument.

\begin{lemma}
For every $F\in\mathcal{F}_k(\mathbb{Z}/p^n\mathbb{Z})$ and $\kappa_0(F)+r_k(p)\leqslant\kappa\leqslant n$, let $L_{\kappa}(x_0)$, $I_{\kappa}^{\flat}(x_0)$, and $i_{\kappa}^{\flat}(x_0)$ be as in \eqref{Likappa-def}. Then:
\begin{enumerate}
\item\label{item1}
if $x_1\equiv x_0\pmod{p^{\kappa}}$, then
\begin{equation}
\label{local-constancy}
L_{\kappa}(x_1)=L_{\kappa}(x_0),\quad L_{\kappa}^{\flat}(x_1)=L_{\kappa}^{\flat}(x_0),\quad i_{\kappa}^{\flat}(x_1)=i_{\kappa}^{\flat}(x_0).
\end{equation}
\item\label{item2}
The sequences $(L_{\kappa}(x_0))$ and $(L_{\kappa}^{\flat}(x_0))$ are increasing in $\kappa$, and strictly increasing until the smallest value $\kappa_1$, $\kappa_1^{\flat}$ such that $L_{\kappa_1}(x_0)=L_{\kappa_1^{\flat}}^{\flat}(x_0)=n$. The sequence $(i_{\kappa}^{\flat}(x_0))$ is non-strictly decreasing for $\kappa<\kappa_1^{\flat}$.
\end{enumerate}
\end{lemma}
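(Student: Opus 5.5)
The plan is to work directly from the Taylor-like expansion \eqref{diff-like} applied to the functions $F^{(i)}/i!$, and to relate valuations at $x_1=x_0+p^{\kappa}t$ to those at $x_0$.

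\textbf{Item \eqref{item1}.} Write $x_1=x_0+p^{\kappa}t$. By \eqref{diff-like} with $m=i$,
\[
\frac{F^{(i)}(x_1)}{i!}\equiv\sum_{u\geqslant 0}\binom{i+u}{i}\frac{F^{(i+u)}(x_0)}{(i+u)!}\,p^{u\kappa}t^u .
\]
I will show that the $u\geqslant 1$ terms are too divisible to matter. Multiplying by $p^{\kappa i}$, the $u$-th term has valuation at least $\ord_p\big(F^{(i+u)}(x_0)/(i+u)!\big)+\kappa(i+u)$. This is $\geqslant f_\kappa(x_0,i+u)\geqslant L_\kappa(x_0)$. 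Moreover it is strictly larger than $L_\kappa(x_0)$ unless $i+u\leqslant k$ attains the minimum.

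This naive inequality is not strict, so the comparison has to be made carefully. The cleaner route is the symmetric one:
\begin{itemize}
\item[(a)] Show $f_\kappa(x_1,i)\geqslant L_\kappa(x_0)$ for all $i$, which follows from the expansion above together with \eqref{argmin-from-Fkdef} and Definition~\ref{Fk-def}\eqref{Fk-def-item2} for indices $>k$. This gives $L_\kappa(x_1)\geqslant L_\kappa(x_0)$.
\item[(b)] Swap the roles of $x_0$ and $x_1$ (note $x_0\equiv x_1\bmod p^\kappa$ too). This gives the reverse inequality, hence $L_\kappa(x_1)=L_\kappa(x_0)$.
\end{itemize}

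For $L^\flat$ the same symmetric argument is used with the extra factor $i$. The identity $i\binom{i+u}{i}=(i+u)\binom{i+u-1}{i-1}$ shows that the $u$-th term, weighted by $i$, has valuation $\geqslant f^\flat_\kappa(x_0,i+u)$. This is exactly where the hypothesis $\kappa\geqslant\kappa_0+r_k(p)$ and \eqref{argmin-from-Fkdef} enter, to control indices beyond $k$.

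For $i^\flat_\kappa$, I claim that at the maximal minimizing index $i^\ast=i^\flat_\kappa(x_0)$, the $u\geqslant 1$ terms are strictly more divisible. Each such term is bounded by $f^\flat_\kappa(x_0,i^\ast+u)$, and this is $>L^\flat_\kappa(x_0)$ since $i^\ast+u>i^\ast$ is not a minimizer by maximality (and when $i^\ast+u>k$, by \eqref{argmin-from-Fkdef}). Hence $f^\flat_\kappa(x_1,i^\ast)=f^\flat_\kappa(x_0,i^\ast)$ whenever this is $<n$, so $i^\ast\in I^\flat_\kappa(x_1)$ and $i^\flat_\kappa(x_1)\geqslant i^\ast$. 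Symmetry again gives equality. The degenerate case $L^\flat=n$ is handled by convention: there all $f^\flat$ equal $n$ and $i^\flat=k$ on both sides.

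\textbf{Item \eqref{item2}.} Each $f_\kappa(x_0,i)$ and $f^\flat_\kappa(x_0,i)$ is of the form $\min(n,c_i+\kappa i)$ with $i\geqslant 1$. So each increases by at least $1$ per unit step of $\kappa$ until it reaches $n$. Taking the minimum over $i\in[k]$ preserves this, giving monotonicity and strict increase until the cap $n$ is reached. For $i^\flat_\kappa$, passing from $\kappa$ to $\kappa+1$ adds $i$ to each uncapped $f^\flat_\kappa(x_0,i)$. Take $j>i^\flat_\kappa$: then $f^\flat_\kappa(x_0,j)>L^\flat_\kappa$, whereas the index $i^\flat_\kappa$ gains less than $j$ does. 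Hence $j$ cannot become the maximal minimizer at $\kappa+1$, so $i^\flat_{\kappa+1}\leqslant i^\flat_\kappa$ as long as the values are below $n$, i.e.\ for $\kappa<\kappa^\flat_1$.

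\textbf{Main obstacle.} The main obstacle is item \eqref{item1}: showing that the higher-order corrections do not change valuations. This needs the tail bound for indices $>k$ from Definition~\ref{Fk-def}\eqref{Fk-def-item2}, including the factor $p^{(i-k)^+\kappa_0}$ which is absorbed by $p^{u\kappa}$ since $\kappa\geqslant\kappa_0$. It also needs care with the truncation modulo $p^{\min((j+1)\kappa,n)}$ in \eqref{diff-like}, which is handled by choosing $j$ large.
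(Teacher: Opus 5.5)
Your proof is correct and follows essentially the paper's route: the same expansion of $p^{\kappa i}F^{(i)}(x_0+p^{\kappa}t)/i!$ and of its $i$-weighted version (via $i\binom{i+u}{i}=(i+u)\binom{i+u-1}{i-1}$), the tail control from \eqref{argmin-from-Fkdef}, and a symmetry argument for each of $L_\kappa$, $L_\kappa^\flat$ and $i_\kappa^\flat$. The differences are cosmetic. For $i_\kappa^\flat$ you show that the maximal minimizer at $x_0$ is still a minimizer at $x_1$, giving $\geqslant$, while the paper shows that indices above $i_\kappa^\flat(x_0)$ stay non-minimizing at $x_1$, giving $\leqslant$; symmetry closes either version, and your item (2) simply spells out what the paper calls clear.
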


\begin{proof}
Writing $x_1=x_0+p^{\kappa}t$, from \eqref{diff-like} we obtain
\[ p^{\kappa i}\frac{F^{(i)}(x_0+p^{\kappa}t)}{i!}\equiv\sum_{j=0}^{n-i}\frac{F^{(i+j)}(x_0)}{i!j!}(p^{\kappa}t)^jp^{\kappa i}\equiv\sum_{j=0}^{n-i}\binom{i+j}i\frac{F^{(i+j)}(x_0)}{(i+j)!}p^{\kappa(i+j)}t^j\pmod{p^n}, \]
from which it follows that $L_{\kappa}(x_1)\geqslant L_{\kappa}(x_0)$; a symmetric argument establishes the reverse inequality. Similarly, we have
\[ ip^{\kappa i}\frac{F^{(i)}(x_0+p^{\kappa}t)}{i!}\equiv\sum_{j=0}^{n-i}\binom{i+j-1}{i-1}(i+j)\frac{F^{(i+j)}(x_0)}{(i+j)!}p^{\kappa(i+j)}t^j\pmod{p^n} \]
and consequently $L_{\kappa}^{\flat}(x_1)=L_{\kappa}^{\flat}(x_0)$. Of course, if $L_{\kappa}^{\flat}(x_0)=L_{\kappa}^{\flat}(x_1)=n$, then $i_{\kappa}^{\flat}(x_1)=i_{\kappa}^{\flat}(x_0)=k$; otherwise, when $L_{\kappa}^{\flat}(x_0)<n$, from the above we also conclude that for every $i>i_{\kappa}^{\flat}(x_0)$,
\[ f_{\kappa}^{\flat}(x_1,i)\geqslant\min_{i'>i_{\kappa}^{\flat}(x_0)}\min(n,f_{\kappa}^{\flat}(x_0,i'))>L_{\kappa}^{\flat}(x_0)=L_{\kappa}^{\flat}(x_1), \]
whence $i\not\in I_{\kappa}^{\flat}(x_1)$ and so $i_{\kappa}^{\flat}(x_1)\leqslant i_{\kappa}^{\flat}(x_0)$. A symmetric argument establishes the reverse inequality, thereby completing the proof of \eqref{local-constancy}. The claims in \eqref{item2} are clear, keeping in mind that $I_{\kappa}^{\flat}(x_0)\subseteq[k]$.
\end{proof}

In the next lemma, we summarize the direct consequences of the assumption that $F\in\mathcal{F}_k(\mathbb{Z}/p^n\mathbb{Z})$ for the lifting procedure.

\begin{lemma}
\label{Fkprop-lemma}
Let $F\in\mathcal{F}_k(\mathbb{Z}/p^n\mathbb{Z})$. For every $\kappa_0(F)+r_k(p)\leqslant\kappa\leqslant n$, let $L_{\kappa}(x_0)$, $I_{\kappa}^{\flat}(x_0)$, and $i_{\kappa}^{\flat}(x_0)$ be as in \eqref{Likappa-def}. Then there exists a polynomial $P_{\kappa}\in(\mathbb{Z}/p^n\mathbb{Z})[t]$ such that all of the following hold:
\begin{enumerate}
\item\label{cong-part1}
$F(x_0+p^{\kappa}t)\equiv F(x_0)+p^{L_{\kappa}}P_{\kappa}(t)\pmod{p^n}$;
\item\label{cong-part2}
for $L_{\kappa}^{\flat}<n$, we have $P_{\kappa}'(t)=p^{L_{\kappa}^{\flat}-L_{\kappa}}Q_{\kappa}(t)$ for some polynomial $Q_{\kappa}(t)$ whose reduction modulo $p$ is of degree $i_{\kappa}^{\flat}(x_0)-1$, with unit coefficients of $Q_{\kappa}(t)$ modulo $p$ precisely at terms $t^{i-1}$ with $i\in I_{\kappa}^{\flat}(x_0)$; and
\item\label{cong-part3} if $\kappa_0(F)+r_k(p)\leqslant\kappa_1\leqslant\kappa_2\leqslant n$ satisfy $L_{\kappa_2}^{\flat}<n$
and $|I_{\kappa_1}^{\flat}(x_0)\cap I_{\kappa_2}^{\flat}(x_0)|>1$, then $\kappa_1=\kappa_2$.
\end{enumerate}
\end{lemma}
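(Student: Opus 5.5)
We set $F(x_0+p^{\kappa}t)-F(x_0)$ equal to its truncated Taylor expansion, using \eqref{diff-like} with $m=0$, $j=n-1$ and modulus $p^n$:
\[ F(x_0+p^{\kappa}t)\equiv F(x_0)+\sum_{i\geqslant 1}\frac{F^{(i)}(x_0)}{i!}p^{\kappa i}t^i\pmod{p^n}. \]
By definition of $f_\kappa(x_0,i)$, the $i$-th coefficient $c_i=F^{(i)}(x_0)p^{\kappa i}/i!$ has $p$-adic order at least $\min(n,f_\kappa(x_0,i))\geqslant L_\kappa(x_0)$. For $i\leqslant k$ this is immediate. For $i>k$ we use Definition~\ref{Fk-def}\eqref{Fk-def-item2}: it gives $\ord_p(F^{(i)}/i!)+(i-k)\kappa_0\geqslant\min(n,\Delta_k(F;x_0)+1)$. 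Adding $\kappa k+\kappa(i-k)-(i-k)\kappa_0\geqslant\kappa k$ shows the order is at least $\min(n,\Delta_k+\kappa k)+$(slack), which dominates $L_\kappa$. This is exactly the remark that $\argmin_{i\geqslant1}f_\kappa\subseteq[k]$. We then define
\[ P_\kappa(t)=\sum_{i\geqslant1} p^{-L_\kappa}c_i t^i, \]
with integer coefficients lifted arbitrarily mod $p^n$, which gives part \eqref{cong-part1}.

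For part \eqref{cong-part2}, differentiate formally: $P_\kappa'(t)=\sum_i i\,p^{-L_\kappa}c_it^{i-1}$, and $\ord_p(i c_i)=f_\kappa^{\flat}(x_0,i)$ whenever this is $<n$. By \eqref{argmin-from-Fkdef}, valid since $\kappa\geqslant\kappa_0+r_k(p)$ and $L_\kappa^{\flat}<n$, the minimum of these orders over all $i\geqslant1$ equals $L^{\flat}_\kappa$. It is attained exactly on $I_\kappa^{\flat}(x_0)\subseteq[k]$, and all $i>k$ give strictly larger order; this is the same computation as above with the extra factor $\ord_p i$, which is where the shift by $r_k(p)$ is used. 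Setting $Q_\kappa=p^{L_\kappa-L^{\flat}_\kappa}P'_\kappa$ therefore yields integral coefficients. The unit coefficients are precisely those at $t^{i-1}$ with $i\in I^\flat_\kappa$, so the degree mod $p$ is $i^\flat_\kappa-1$. Some care is needed with truncation mod $p^n$, but since $L^\flat_\kappa<n$ the unit coefficients survive.

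The main obstacle is part \eqref{cong-part3}. Suppose $i<i'$ both lie in $I^\flat_{\kappa_1}\cap I^\flat_{\kappa_2}$. Since $f^\flat_\kappa(x_0,i)=\min(n,\alpha_i+\kappa i)$ with $\alpha_i=\ord_p i+\ord_p(F^{(i)}(x_0)/i!)$, membership in $I^\flat_{\kappa}$ at both levels gives
\[ \alpha_i+\kappa_1 i=\alpha_{i'}+\kappa_1 i' \quad\text{and}\quad \alpha_i+\kappa_2 i=\alpha_{i'}+\kappa_2 i'. \]
These equalities are genuine, not truncated at $n$: $L^\flat_{\kappa_2}<n$, and by monotonicity in $\kappa$ we also have $L^\flat_{\kappa_1}<n$. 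Subtracting gives $(\kappa_2-\kappa_1)(i'-i)=0$, hence $\kappa_1=\kappa_2$. The only subtle point is ensuring that the minimum values at level $\kappa_1$ are below $n$; this follows from $L^\flat_{\kappa_1}\leqslant L^\flat_{\kappa_2}<n$ by the monotonicity of the earlier lemma.
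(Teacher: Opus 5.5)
Your proposal is correct and follows essentially the paper's argument: the same truncated expansion defines $P_\kappa$ and $Q_\kappa$, with the argmin property of $f^{\flat}_\kappa$ for $\kappa\geqslant\kappa_0(F)+r_k(p)$ controlling the terms $i>k$. For part (3), you compare the exact valuations $\ord_p i+\ord_p\frac{F^{(i)}(x_0)}{i!}+\kappa i$ at the two levels; this is the same coefficient comparison the paper extracts by differentiating $p^{L_{\kappa_1}}P_{\kappa_1}(p^{\kappa_2-\kappa_1}t)\equiv p^{L_{\kappa_2}}P_{\kappa_2}(t)$, stated directly, and your check that $L^{\flat}_{\kappa_1}\leqslant L^{\flat}_{\kappa_2}<n$ makes explicit a point the paper leaves implicit.
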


\begin{proof}
We begin with the expansion
\begin{equation}
\label{expansion}
F(x_0+p^{\kappa}t)\equiv F(x_0)+\sum_{i=1}^n\frac{F^{(i)}(x_0)}{i!}(p^{\kappa}t)^i\pmod{p^n}.
\end{equation}
From our conditions, we have that
\[ \sum_{i=1}^n\frac{F^{(i)}(x_0)}{i!}(p^{\kappa}t)^i=p^{L_{\kappa}}P_{\kappa}(t),\quad P_{\kappa}(t):=\sum_{i=1}^n\Big(p^{-L_{\kappa}}\frac{F^{(i)}(x_0)}{i!}p^{\kappa i}\Big)t^i, \]
where further
\[ P_{\kappa}'(t)=p^{L_{\kappa}^{\flat}-L_{\kappa}}Q_{\kappa}(t),\quad Q_{\kappa}(t):=
\sum_{i=1}^n\Big(p^{-L_{\kappa}^{\flat}}i\frac{F^{(i)}(x_0)}{i!}p^{\kappa i}\Big)t^{i-1}. \]
Keeping in mind \eqref{argmin-from-Fkdef}, we see that the polynomial $Q(t)$ has unit coefficients precisely at $i\in I_{\kappa}^{\flat}(x_0)$, and the degree of $Q(t)$ modulo $p$ is precisely $i_{\kappa}^{\flat}(x_0)-1$. This establishes items~\eqref{cong-part1} and \eqref{cong-part2}.

As for item \eqref{cong-part3}, from
\[ F(x_0)+p^{L_{\kappa_1}}P_{\kappa_1}(p^{\kappa_2-\kappa_1}t)\equiv F(x_0+p^{\kappa_2}t)\equiv F(x_0)+p^{L_{\kappa_2}}P_{\kappa_2}(t)\pmod{p^n}, \]
which holds as a congruence of polynomials (that is, of their respective coefficients), it follows by differentiation that
\[ p^{L_{\kappa_1}^{\flat}+(\kappa_2-\kappa_1)}Q_{\kappa_1}(p^{\kappa_2-\kappa_1}t)\equiv p^{L_{\kappa_2}^{\flat}}Q_{\kappa_2}(t)\pmod{p^n}. \]
The polynomial on the right-hand side of this congruence has coefficients with $t^{i-1}$ exactly divisible by $p^{L_{\kappa_2}^{\flat}}$ at all $i\in I_{\kappa_2}^{\flat}(x_0)$, and in particular at the at least two distinct values of $i\in I_{\kappa_1}^{\flat}(x_0)\cap I_{\kappa_2}^{\flat}(x_0)$. In light of $L_{\kappa_2}^{\flat}<n$, the same must be true of those two coefficients on the left-hand side, but this is clearly only possible if $\kappa_1=\kappa_2$.
\end{proof}

\subsection{Lifting and stratification}
We now consider the family
\[ X_{\kappa}(\mathbb{Z}/p^n\mathbb{Z})=\{A\subseteq\mathbb{Z}/p^n\mathbb{Z}:A=A+p^{\kappa}\mathbb{Z}/p^n\mathbb{Z}\} \]
of $p^{\kappa}\mathbb{Z}/p^n\mathbb{Z}$-invariant subsets of $\mathbb{Z}/p^n\mathbb{Z}$, and sets $B_{\kappa}(x_0),B_{\kappa}^{\circ}(x_0)\in X_{\kappa}(\mathbb{Z}/p^n\mathbb{Z})$ given by
\[ B_{\kappa}(x_0)=\{x_0+p^{\kappa}t:t\in\mathbb{Z}/p^{n-\kappa}\mathbb{Z}\},\quad B_{\kappa}^{\circ}(x_0)=B_{\kappa}(x_0)\setminus B_{\kappa+1}(x_0). \]
Using \eqref{local-constancy}, we may denote by $L_{\kappa}(B_{\kappa}(x_0))$ and $i_{\kappa}^{\flat}(B_{\kappa}(x_0))$ the common values of $L_{\kappa}(x)$ and $i_{\kappa}^{\flat}(x)$ over these sets. We will also consider two families of these neighborhoods, as follows:
\begin{equation}
\label{classN-def}
\mathcal{N}_F=\left\{\bigsqcup_{t\in T}B_{\kappa+1}(x_0+tp^{\kappa}):
\begin{aligned}
&\kappa_0(F)\leqslant\kappa\leqslant n,\,\,x_0\in A_F,\,\, T\subseteq\mathbb{Z}/p\mathbb{Z},\\
&L_{\kappa}(x_0)\leqslant n-2-2\rho_k(p),\,\,\text{and}\\
&F(x_0+p^{\kappa}t)\equiv F(x_0)+p^{L_{\kappa}}P_{\kappa}(t)\bmod{p^n}\,\,(t\in T),\\
&P_{\kappa}\in(\mathbb{Z}/p^n\mathbb{Z})[t],\,\ord_pP_{\kappa}'(t)\leqslant\rho_k(p)\,\,(t\in T)
\end{aligned}\right\},
\end{equation}
\begin{equation}
\label{classF-def}
\mathcal{F}_F=\bigsqcup_{\kappa=\kappa_0(F)}^n\mathcal{F}_{F,\kappa},\quad \mathcal{F}_{F,\kappa}=\{B_{\kappa}(x_0),B_{\kappa}^{\circ}(x_0):L_{\kappa}(x_0)\geqslant n-1-2\rho_k(p)\},
\end{equation}
where $\rho_k(p)\in\mathbb{Z}_{\geqslant 0}$ is some fixed choice of constants (consistent throughout \eqref{classN-def} and \eqref{classF-def}) such that $\rho_k(p)\geqslant r_k(p)$ for all $p$ and $\rho_k(p)=0$ for all sufficiently large $p\geqslant p_0(k)$, and the condition $\ord_pP_{\kappa}'(t)\leqslant\rho_k(p)$ in the definition of $\mathcal{N}_F$ is required to be satisfied for every representative of $t\in T\subseteq\mathbb{Z}/p\mathbb{Z}$. The point of these families is that sums of $e(F(x)/p^n)$ over sets in families $\mathcal{N}_F$ and $\mathcal{F}_F$ are easy to understand; we will use this in section~\ref{proof-main-theorem}. For now, we first prove the following lemma.

\begin{lemma}
Let $F\in\mathcal{F}_k(\mathbb{Z}/p^n\mathbb{Z})$, $x_0\in A_F$, and $\kappa_0(F)+r_k(p)\leqslant\kappa\leqslant n$. Assume that $B=B_{\kappa}(x_0)\not\in\mathcal{F}_F$. Then:
\begin{enumerate}
\item\label{rec-step1} If, for any $x_0'\in B$, $|I_{\kappa}^{\flat}(x_0')|=1$, then
\[ \begin{cases}B=B_{\kappa}(x_0')=B_{\kappa+1}(x_0')\sqcup B_{\kappa}^{\circ}(x_0')\quad\text{with }B_{\kappa}^{\circ}(x_0')\in\mathcal{N}_F,&\text{if }i_{\kappa}^{\flat}(x_0')>1;\\
B=B_{\kappa}(x_0')\in\mathcal{N}_F,&\text{if }i_{\kappa}^{\flat}(x_0')=1.\end{cases} \]
\item\label{rec-step2} If $|I_{\kappa}^{\flat}(x)|>1$ for every $x\in B$, then there exist disjoint sets $X^{\flat}_{\kappa},X^{\sharp}_{\kappa}\subseteq B$, values $\kappa\leqslant\lambda_x\leqslant n$ ($x\in X^{\flat}_{\kappa}\sqcup X^{\sharp}_{\kappa}$), and a finite set $\mathcal{B}_{F,\kappa}\subseteq\mathcal{N}_F$ such that we have a decomposition
\begin{equation}
\label{decomp-target}
B=\bigsqcup_{x\in X^{\flat}_{\kappa}}B_{\lambda_x}(x)\sqcup\bigsqcup_{x\in X^{\sharp}_{\kappa}}B_{\lambda_x}(x)\sqcup\bigsqcup_{B''\in\mathcal{B}_{F,\kappa}}B''
\end{equation}
and such that
\[ |X^{\flat}_{\kappa}|,|X^{\sharp}_{\kappa}|=\OO\nolimits_k(1)\qquad\text{and}\qquad
\begin{alignedat}{3}
&B_{\lambda_x}(x)\not\in\mathcal{F}_F\text{ and }i_{\lambda_x}^{\flat}(B_{\lambda_x}(x))<i_{\kappa}^{\flat}(B)&&\quad (x\in X^{\flat}_{\kappa}),\\
&B_{\lambda_x}(x)\in\mathcal{F}_F&&\quad (x\in X^{\sharp}_{\kappa}).
\end{alignedat} \]
\end{enumerate}
\end{lemma}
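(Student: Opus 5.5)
The whole argument will run through the local expansion of Lemma~\ref{Fkprop-lemma}. For $x_0'\in B$ we write $F(x_0'+p^{\kappa}t)\equiv F(x_0')+p^{L_{\kappa}}P_{\kappa}(t)$ with $P_{\kappa}'=p^{L_{\kappa}^{\flat}-L_{\kappa}}Q_{\kappa}$. By item~\eqref{cong-part2} of that lemma, $Q_{\kappa}\bmod p$ has degree $i_{\kappa}^{\flat}-1$ and unit coefficients exactly at the indices in $I_{\kappa}^{\flat}$. By \eqref{local-constancy}, the quantities $L_{\kappa}$, $L_{\kappa}^{\flat}$, $i_{\kappa}^{\flat}$ are constant on $B$. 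The hypothesis $B\notin\mathcal{F}_F$ gives $L_{\kappa}\leqslant n-2-2\rho_k(p)$. Since $\ord_p i\leqslant r_k(p)\leqslant\rho_k(p)$, we also get $L_{\kappa}^{\flat}-L_{\kappa}\leqslant\rho_k(p)$ and $L_{\kappa}^{\flat}<n$. These are the quantitative conditions needed for membership in $\mathcal{N}_F$.

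\emph{Item \eqref{rec-step1}.} Suppose $I_{\kappa}^{\flat}(x_0')=\{i\}$ with $i=i_{\kappa}^{\flat}$. Then $Q_{\kappa}(t)\equiv c\,t^{i-1}\pmod p$ with $c$ a unit.
\begin{itemize}
\item If $i=1$, then $Q_{\kappa}$ is a unit constant modulo $p$. Hence $\ord_pP_{\kappa}'(t)=L_{\kappa}^{\flat}-L_{\kappa}\leqslant\rho_k(p)$ for every $t$. Taking $T=\mathbb{Z}/p\mathbb{Z}$ with the parameter $\kappa$ (the sets $B_{\kappa+1}(x_0'+tp^{\kappa})$ tile $B$), we get $B\in\mathcal{N}_F$.
\item If $i>1$, the same bound on $\ord_pP_{\kappa}'(t)$ holds exactly for $t\not\equiv0\pmod p$. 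We therefore take $T=(\mathbb{Z}/p\mathbb{Z})^{\times}$, whose union is $B_{\kappa}^{\circ}(x_0')$, and this set lies in $\mathcal{N}_F$. The complementary piece is $B_{\kappa+1}(x_0')$.
\end{itemize}

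\emph{Item \eqref{rec-step2}.} We work modulo $p$ at level $\kappa$ and factor $\bar Q_{\kappa}$, which has degree $i_{\kappa}^{\flat}-1\leqslant k-1$.
\begin{itemize}
\item \textbf{Nonroot residues.} For residues $t$ that are not roots of $\bar Q_{\kappa}$, the condition $\ord_pP_{\kappa}'(t)\leqslant\rho_k(p)$ holds. Their union $\bigsqcup_t B_{\kappa+1}(x_0+tp^{\kappa})$ is therefore one member of $\mathcal{N}_F$, which we place in $\mathcal{B}_{F,\kappa}$.
\item \textbf{Root residues.} There are at most $k-1$ roots $t_1,\dots,t_s$. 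For each root we set $x_j=x_0+t_jp^{\kappa}$ and follow the chain $B_{\lambda}(x_j)$ for $\lambda=\kappa+1,\kappa+2,\dots$.
\end{itemize}

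Along each chain we continue until one of three events happens.
\begin{enumerate}
\item The neighbourhood enters $\mathcal{F}_F$. We then record $x_j$ in $X^{\sharp}_{\kappa}$.
\item $i^{\flat}$ strictly drops below $i_{\kappa}^{\flat}(B)$. We then record $x_j$ in $X^{\flat}_{\kappa}$.
\item $I^{\flat}$ becomes a singleton. We then peel off $B_{\lambda}^{\circ}\in\mathcal{N}_F$ as in item~\eqref{rec-step1} and continue.
\end{enumerate}

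By Lemma~\ref{Fkprop-lemma}\eqref{cong-part3}, $|I_{\lambda}^{\flat}\cap I_{\kappa}^{\flat}|\leqslant1$ for $\lambda>\kappa$. Hence $|I_{\lambda}^{\flat}(x)|>1$ can persist only by moving to new indices, which forces $i^{\flat}$ to decrease, since $i^{\flat}$ is monotone by the stratification lemma. So the chain with $|I^{\flat}|>1$ and $i^{\flat}$ unchanged cannot last beyond one step. In the singleton case, the sub-root structure is pinned down: one extends $x_j$ by a Hensel-type lift (the factor $ct^{i-1}$ has a single root, at $0$), so only one child neighbourhood survives per level. Thus each root produces a single descending chain. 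The shells $B_{\lambda}^{\circ}$ shed along the way, together with the nonroot unions, all lie in $\mathcal{N}_F$, and the chain terminates at a set of type~1 or type~2. Counting gives at most $k-1$ roots per level. Since there are only $\OO_k(1)$ branchings before $i^{\flat}$ drops, we get $|X^{\flat}_{\kappa}|,|X^{\sharp}_{\kappa}|=\OO_k(1)$.

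\emph{Main obstacle.} The hard part is controlling the singleton stage inside the chain. While $I^{\flat}=\{i\}$ with $i>1$ stays unchanged for many levels, the process shrinks to $B_{\lambda+1}(x)$ each time, and the number of shells produced is unbounded. That is harmless only because each shell is in $\mathcal{N}_F$, so $\mathcal{B}_{F,\kappa}$ is finite but not $\OO_k(1)$, consistent with the statement. We must still check two things. First, $L_{\lambda}$ stays $\leqslant n-2-2\rho_k(p)$ until the set lands in $\mathcal{F}_F$. Second, once $i^{\flat}$ changes it does so downward, which is where monotonicity of $i^{\flat}$ from the stratification lemma is used. Verifying that the unique surviving child inherits $|I^{\flat}|$ behaviour compatible with these two facts is the step to do carefully.
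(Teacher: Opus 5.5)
Your item (1) is the paper's argument. The skeleton of your item (2) also matches the paper: the non-root residues go into one member of $\mathcal{N}_F$, each root is followed downward with singleton shells peeled off via item (1), and a chain stops on entering $\mathcal{F}_F$ or when $i^{\flat}$ drops. The gap is in the step meant to control branching. You deduce from Lemma~\ref{Fkprop-lemma}(3) that a non-singleton $I^{\flat}$ ``can persist only by moving to new indices, which forces $i^{\flat}$ to decrease''. This is false. For example, $I^{\flat}_{\kappa}=\{1,3\}$ followed by $I^{\flat}_{\lambda}=\{2,3\}$ satisfies $|I^{\flat}_{\kappa}\cap I^{\flat}_{\lambda}|\leqslant 1$ with $i^{\flat}$ unchanged. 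There is a second problem. Lemma~\ref{Fkprop-lemma}(3) compares two levels at one and the same point, but you compare $I^{\flat}_\kappa$ at $x_0$ with $I^{\flat}_\lambda$ on child balls. Unlike $i^{\flat}_\kappa$, the set $I^{\flat}_\kappa$ is not constant on $B$. Writing $Q_{\kappa,x}$ for the polynomial of Lemma~\ref{Fkprop-lemma} centred at $x$, one has $\bar Q_{\kappa,x_0+t_0p^{\kappa}}(t)=\bar Q_{\kappa,x_0}(t+t_0)$, so the support moves. Consequently ``each root produces a single descending chain'' and ``only $\OO_k(1)$ branchings'' are not established. Your event list also has no rule for a later level where $|I^{\flat}|>1$ recurs with the same $i^{\flat}$; there $\bar Q_\lambda$ can have several roots. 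Your closing paragraph leaves exactly this open.

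The paper closes the gap by recursing at such levels (the sets $X^{(r)}_\kappa$) and bounding the depth. Take the terminal point $x$ of a chain along which $i^{\flat}$ is constant. All recorded levels $\kappa=\lambda^{(0)}<\lambda^{(1)}<\cdots$ have $|I^{\flat}_{\lambda^{(j)}}(x)|>1$ at this common point, which is where the hypothesis ``for every $x\in B$'' is used. By Lemma~\ref{Fkprop-lemma}(3), the nonempty sets $I^{\flat}_{\lambda^{(j)}}(x)\setminus\{i^{\flat}\}\subseteq[i^{\flat}-1]$ are then pairwise disjoint. Hence there are fewer than $k$ such levels, each with at most $k-1$ roots, which gives the $\OO_k(1)$ count.

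Under the literal hypothesis of item (2) there is an even shorter repair. Let $t_0$ be a root of $\bar Q_{\kappa,x_0}$ of multiplicity $\mu$, and put $x_1=x_0+t_0p^{\kappa}$. The exact identity $p^{L^{\flat}_{\kappa+1}}Q_{\kappa+1,x_1}(s)=p^{L^{\flat}_{\kappa}+1}Q_{\kappa,x_1}(ps)$ shows that $i^{\flat}_{\kappa+1}(x_1)\leqslant\mu+1$ whenever $B_{\kappa+1}(x_1)\notin\mathcal{F}_F$. If $\mu=i^{\flat}_\kappa-1$, then $\bar Q_{\kappa,x_1}$ would be a monomial, i.e.\ $I^{\flat}_\kappa(x_1)=\{i^{\flat}_\kappa\}$, contradicting the hypothesis. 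So every root child already lies in $\mathcal{F}_F$ or has strictly smaller $i^{\flat}$, and $\lambda_x=\kappa+1$ works for every $x$.
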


\begin{proof}
We note that the condition $B_{\kappa}(x_0')\not\in\mathcal{F}_F$ implies that $L_{\kappa}^{\flat}(x_0')<n$. We first address item~\eqref{rec-step1}.
In light of $I_{\kappa}^{\flat}(x_0')=\{i_{\kappa}^{\flat}\}$, we have that $P_{\kappa}'(t)\equiv p^{\delta_{\kappa}}ut^{i_{\kappa}^{\flat}-1}\pmod{p^{\delta_{\kappa}+1}}$ for some unit $u\in(\mathbb{Z}/p^n\mathbb{Z})^{\times}$ and $0\leqslant\delta_k=L_k^{\flat}-L_k\leqslant r_k(p)$. From this, it is clear that $\ord_pP_{\kappa}'(t)\leqslant r_k(p)$ for $p\nmid t$, and that $\ord_pP_{\kappa}'(t)=\delta_k(p)\leqslant r_k(p)$ if for all $t$ when $i_{\kappa}^{\flat}=1$.
The claim \eqref{rec-step1} follows, keeping in mind that $L_{\kappa}(x_0')\leqslant n-2-2\rho_k(p)$ in light of $B_{\kappa}(x_0')\not\in\mathcal{F}_F$.

Now, we proceed to \eqref{rec-step2}. Recall from Lemma~\ref{Fkprop-lemma} that $F(x_0+p^{\kappa}t)\equiv F(x_0)+p^{L_{\kappa}}P_{\kappa}(t)\pmod{p^n}$. We have that $P_{\kappa}'(t)\equiv p^{\delta_{\kappa}}Q_{\kappa}(t)\pmod{p^{\delta_{\kappa}+1}}$ with $0\leqslant\delta_{\kappa}=L_{\kappa}^{\flat}-L_{\kappa}\leqslant r_k(p)$ and for some nonzero polynomial $Q_{\kappa}(t)$ whose degree modulo $p$ is at most $i_{\kappa}^{\flat}(x_0)\leqslant k$, and thus its set of roots modulo $p$, call it $W$, satisfies $|W|\leqslant k$.
We can now write
\begin{align*}
W_1&=\{t\in W:B_{\kappa+1}(x_0+tp^{\kappa})\in\mathcal{F}_F\}\\
W_2&=\{t\in W:i_{\kappa+1}^{\flat}(x_0+tp^{\kappa})<i_{\kappa}^{\flat}(x_0+tp^{\kappa})=i_{\kappa}^{\flat}(x_0)\}\setminus W_1,\\
W_3&=W\setminus (W_1\sqcup W_2),
\end{align*}
and
\begin{equation}
\label{decomp-start}
B=B_{\kappa}(x_0)=\Big(\bigsqcup_{t\in W_1}\sqcup\bigsqcup_{t\in W_2}\sqcup\bigsqcup_{t\in W_3}\sqcup\bigsqcup_{t\not\in W}\Big)B_{\kappa+1}(x_0+tp^{\kappa}).
\end{equation}

The equality \eqref{decomp-start} is the starting point for constructing the decomposition \eqref{decomp-target}. We start building this decomposition inductively, with the first step as follows:
\begin{itemize}
\item For $t\in W_1$, we set $\lambda_{x_0+tp^{\kappa}}=\kappa+1$ and add $x=x_0+tp^{\kappa}$ to the set $X_{\kappa}^{\sharp}$, noting that indeed $B_{\lambda_x}(x)\in\mathcal{F}_F$.
\item For $t\in W_2$, we set $\lambda_{x_0+tp^{\kappa}}=\kappa+1$ and add $x=x_0+tp^{\kappa}$ to the set $X_{\kappa}^{\flat}$, noting that indeed $i_{\lambda_{x_0+tp^{\kappa}}}^{\flat}(x_0+tp^{\kappa})<i_{\kappa}^{\flat}(x_0)$.
\item We note that $B''=\bigsqcup_{t\not\in W}B_{\kappa+1}(x_0+tp^{\kappa})\in\mathcal{N}_F$ and add $B''$ to the set $\mathcal{B}_{F,\kappa}$.
\item For $t\in W_3$, noting that $B_{\kappa+1}(x_0+tp^{\kappa})\not\in\mathcal{F}_F$ by construction, we now iteratively repeat the process for as long as it is possible to do so using the already proved item \eqref{rec-step1}. This produces a finite collection of sets in $\mathcal{N}_F$, which we add to $\mathcal{B}_{F,\kappa}$, and at most one remaining set $B_{\lambda_x'}(x)$ for some $x\equiv x_0+tp^{\kappa}\pmod{p^{\kappa+1}}$, $\lambda'_x\geqslant\kappa+1$, which is either:
\begin{itemize}
\item in $\mathcal{F}_F$, in which case we set $\lambda_x=\lambda_x'$ and add $x$ to the set $X_{\kappa}^{\sharp}$;
\item satisfies $i_{\lambda'_x}^{\flat}(x)<i_{\kappa}^{\flat}(x_0)$, in which case we set $\lambda_x=\lambda_x'$ and add $x$ to the set $X_{\kappa}^{\flat}$;
\item or satisfies $i_{\lambda'_x}^{\flat}(x)=\dots=i_{\kappa}^{\flat}(x_0)$ and $|I_{\lambda'_x}^{\flat}(x)|>1$ for every $x\in B_{\lambda_x}(x)$, in which case we add $x$ to a new set we call $X_{\kappa}^{(1)}$.
\end{itemize}
\end{itemize}
Putting everything together, this first step produces a decomposition of the form
\begin{equation}
\label{decomp-clean-step}
B=\bigsqcup_{x\in X_{\kappa}^{\sharp}\sqcup X_{\kappa}^{\flat}}B_{\lambda_x}(x)\sqcup\bigsqcup_{B''\in\mathcal{B}_{F,\kappa}}B''\sqcup\bigsqcup_{x\in X_{\kappa}^{(1)}}B_{\lambda_x'}(x),
\end{equation}
where $|X_{\kappa}^{\sharp}|,|X_{\kappa}^{\flat}|,|X_{\kappa}^{(1)}|=\OO_k(1)$, and
\[ |I_{\kappa}^{\flat}(x)|,|I_{\lambda_x'}^{\flat}(x)|>1,\,\,\lambda'_x>\kappa,\,\,B_{\lambda'_x}(x)\not\in\mathcal{F}_F\quad (x\in X_{\kappa}^{(1)}). \]

The process can now be repeated on each of the sets $B_{\lambda'_x}(x)$, yielding (after $r$ steps) collections $|X_{\kappa}^{\sharp}|,|X_{\kappa}^{\flat}|=\OO_k(1)^r$ and a collection $X_{\kappa}^{(r)}$ (with $|X_{\kappa}^{(r)}|=\OO_k(1)$ as well) such that $\lambda^{(r)}_x>\dots>\lambda^{(1)}_x=\lambda'_x>\lambda^{(0)}_x=\kappa$ and
\[ |I_{\lambda_x^{(r)}}^{\flat}(x)|,\dots,|I_{\lambda_x^{(1)}}^{\flat}(x)|,|I_{\lambda_x^{(0)}}^{\flat}(x)|>1,\,\,i_{\lambda_x^{(r)}}^{\flat}(x)=\dots=i_{\lambda_x^{(1)}}^{\flat}(x)=i_{\lambda_x^{(0)}}^{\flat}(x)\quad (x\in X_{\kappa}^{(r)}). \]
By Lemma~\ref{Fkprop-lemma}\eqref{cong-part3}, the sets $I_{\lambda_x^{(j)}}^{\flat}(x)\setminus\{i_{\lambda_x^{(j)}}^{\flat}(x)\}\subseteq\{1,\dots,i_{\kappa}^{\flat}(x)-1\}$ must be pairwise disjoint, whence $X_{\kappa}^{(r)}=\emptyset$ for $r\geqslant k$. This then yields the decomposition of a claimed form.
\end{proof}

By iterating the above lemma, we obtain the following statement.

\begin{lemma}
\label{split-domain-proposition}
Let $F\in\mathcal{F}_k(\mathbb{Z}/p^n\mathbb{Z})$, $x_0\in A_F$, and $\kappa_0(F)+r_k(p)\leqslant\kappa\leqslant n$. Then there exists finite collections $\mathcal{B}_F^{\sharp}\subseteq\mathcal{F}_F$ and $\mathcal{B}_F\subseteq\mathcal{N}_F$ such that 
\[ B_{\kappa}(x_0)=\bigsqcup_{B'\in\mathcal{B}_F^{\sharp}}B'\sqcup\bigsqcup_{B''\in\mathcal{B}_F}B'',\quad |\mathcal{B}_F^{\sharp}|=\OO\nolimits_k(1). \]
\end{lemma}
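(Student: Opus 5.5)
We argue by induction on the integer $i=i_{\kappa}^{\flat}(B)\in[k]$ (with $B=B_{\kappa}(x_0)$), applying the preceding lemma repeatedly. The claim we prove is the statement of Lemma~\ref{split-domain-proposition} for every starting ball, with an implied constant $C_k(i)$ depending only on $k$ and on $i=i_{\kappa}^{\flat}(B_{\kappa}(x_0))$. Before the induction, two trivial cases are disposed of. If $B\in\mathcal{F}_F$, we take $\mathcal{B}_F^{\sharp}=\{B\}$ and $\mathcal{B}_F=\emptyset$. If $L_{\kappa}^{\flat}(x_0)=n$, then $L_{\kappa}(x_0)\geqslant n-r_k(p)\geqslant n-1-2\rho_k(p)$, so again $B\in\mathcal{F}_F$. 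We may therefore assume $B\not\in\mathcal{F}_F$ and $L_{\kappa}^{\flat}<n$, which places us in the setting of the preceding lemma.

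First, suppose some $x_0'\in B$ has $|I_{\kappa}^{\flat}(x_0')|=1$. Item~\eqref{rec-step1} of the preceding lemma then applies.
\begin{itemize}
\item If $i_{\kappa}^{\flat}=1$, then $B\in\mathcal{N}_F$ and we are done.
\item Otherwise $B=B_{\kappa+1}(x_0')\sqcup B_{\kappa}^{\circ}(x_0')$ with $B_{\kappa}^{\circ}(x_0')\in\mathcal{N}_F$, and we recurse on $B_{\kappa+1}(x_0')$.
\end{itemize}
By item~\eqref{item2} of the lemma on stratification functions, $i^{\flat}$ can only stay the same or drop along this recursion. This chain of item~\eqref{rec-step1} steps produces only $\mathcal{N}_F$-pieces, which are allowed without any count restriction. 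It ends in one of three ways: at a ball in $\mathcal{F}_F$ (which contributes exactly one element to $\mathcal{B}_F^{\sharp}$), at a ball with strictly smaller $i^{\flat}$, or at a ball where every point has $|I^{\flat}|>1$. The chain terminates because $\kappa$ increases by one at each step and is at most $n$; moreover $L_\kappa$ strictly increases, so the ball eventually falls into $\mathcal{F}_F$. Only one ball survives the chain, so this case contributes $\OO(1)$ to the count.

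Second, in the situation of item~\eqref{rec-step2}, the lemma gives the decomposition \eqref{decomp-target}. It contains $\OO_k(1)$ balls in $\mathcal{F}_F$, which go into $\mathcal{B}_F^{\sharp}$, together with $\mathcal{N}_F$-pieces, which go into $\mathcal{B}_F$. It also contains $\OO_k(1)$ balls $B_{\lambda_x}(x)\not\in\mathcal{F}_F$, indexed by $x\in X_{\kappa}^{\flat}$, each with $i_{\lambda_x}^{\flat}<i$. To each of these we apply the induction hypothesis; note that $\lambda_x\geqslant\kappa\geqslant\kappa_0(F)+r_k(p)$, so the hypothesis is applicable. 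This yields the recursion
\[ C_k(i)\leqslant \OO_k(1)+\OO_k(1)\max_{i'<i}C_k(i'), \]
and since $i\leqslant k$, this gives $C_k(i)=\OO_k(1)$. The base case is $i=1$. There $|I^{\flat}|=1$ automatically, so item~\eqref{rec-step1} makes $B$ itself an element of $\mathcal{N}_F$, giving $C_k(1)=1$.

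The main subtlety is ensuring finiteness and boundedness when the two items alternate. A case-(1) chain may end in a ball where $|I^{\flat}|>1$ for all points, so that we must pass to case (2) at the same value of $i$. The count remains bounded because the case-(1) chain adds only a single surviving ball. Case (2) then strictly decreases $i$ for every non-$\mathcal{F}_F$ output: the lemma already accounts internally, via Lemma~\ref{Fkprop-lemma}\eqref{cong-part3}, for the repeated same-$i$ steps with $|I^\flat|>1$. Hence the total depth of the recursion tree in terms of $i$ is at most $k$, each node has branching $\OO_k(1)$, and $|\mathcal{B}_F^{\sharp}|=\OO_k(1)$ follows. The collection $\mathcal{B}_F$ is finite but may be large, which is harmless for the statement.
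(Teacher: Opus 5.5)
Your proposal is correct and follows the paper's route. The paper's proof is just ``by iterating the above lemma,'' and your induction on $i_{\kappa}^{\flat}(B)$ is exactly the bookkeeping that iteration needs: chains of item~(1) steps leave a single surviving ball, and item~(2) strictly lowers $i^{\flat}$ on every non-$\mathcal{F}_F$ output, which gives $|\mathcal{B}_F^{\sharp}|\leqslant\OO_k(1)^k$. Your preliminary reduction is also right: $L_{\kappa}^{\flat}=n$ forces $L_{\kappa}\geqslant n-r_k(p)$, and since $\rho_k(p)\geqslant r_k(p)$ this places $B$ in $\mathcal{F}_F$.
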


\section{Proof of Theorem~\ref{main-theorem}}
\label{proof-main-theorem}

In this section, we prove Theorem~\ref{main-theorem}. Using the results of section~\ref{preliminaries-section}, we will rewrite the sum $S(\bm{a};q)$ as the finite union of exponential sums with certain explicit phases $f_{\bm{\epsilon}}(x)$. We will use results of \S\ref{recurrence-sec} and section~\ref{alignment-section} to argue that these phases $f_{\bm{\epsilon}}(x)$ belong to specific classes $\mathcal{F}_k(\mathbb{Z}/p^n\mathbb{Z})$. Finally, we will use results of section~\ref{stratification-section} to stratify exponential sums with phases in $\mathcal{F}_k(\mathbb{Z}/p^n\mathbb{Z})$ to parts of the domain with substantial cancellation and those with essentially no cancellation, from which our bounds will follow.

\subsection{Estimation of exponential sums with phases in the class \texorpdfstring{$\mathcal{F}_k(\mathbb{Z}/p^n\mathbb{Z})$}{Fk(Z/pnZ)}}
In this subsection, we collect a few final ingredients and prove Proposition~\ref{Fk-final-estimate}, in which we estimate complete exponential sums with general phases in the class $\mathcal{F}_k(\mathbb{Z}/p^n\mathbb{Z})$.

Recall Definition~\ref{Fk-def} of the class $\mathcal{F}_k(\mathbb{Z}/p^n\mathbb{Z})$. In Proposition~\ref{split-domain-proposition} we split the domain of a function $F\in\mathcal{F}_k(\mathbb{Z}/p^n\mathbb{Z})$ as the finite union of domains in the classes $\mathcal{N}_F$ and $\mathcal{F}_{F,\lambda}$ defined in \eqref{classN-def} and \eqref{classF-def}. In the following two preparatory lemmata, we summarize the behavior of exponential sums restricted to such domains.

\begin{lemma}
\label{total-cxl-lemma}
Let $F\in\mathcal{F}_k(\mathbb{Z}/p^n\mathbb{Z})$ and $B\in\mathcal{N}_F$. Then
\[ \sum_{x\in B}e\Big(\frac{F(x)}{p^n}\Big)=0. \]
\end{lemma}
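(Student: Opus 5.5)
Fix $B=\bigsqcup_{t\in T}B_{\kappa+1}(x_0+tp^{\kappa})\in\mathcal{N}_F$ with data $\kappa,x_0,T,L=L_{\kappa}(x_0)\leqslant n-2-2\rho_k(p)$ and $P=P_{\kappa}$ as in \eqref{classN-def}. The plan is to show separately, for each $t\in T$, that the sum over $B_{\kappa+1}(x_0+tp^{\kappa})$ vanishes. Parametrize this block as $x=x_0+p^{\kappa}(t+pu)$ with $u\in\mathbb{Z}/p^{n-\kappa-1}\mathbb{Z}$; each $x$ is attained once. By the defining congruence,
\[ F(x)\equiv F(x_0)+p^{L}P(t+pu)\pmod{p^n}, \]
so the block sum equals $e(F(x_0)/p^n)\sum_{u}e\big(P(t+pu)/p^{n-L}\big)$. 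Note that $P(t+pu)\bmod p^{n-L}$ depends only on $u\bmod p^{n-L-1}$, which is compatible since $\kappa\geqslant 1$ should give $n-\kappa-1\geqslant n-L-1$ (as $L\geqslant\kappa$).

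We reduce this inner sum to a stationary-phase vanishing of the type in Lemma~\ref{statphase-lemma}\eqref{MSP-claim1}, with modulus $p^{N}$, $N=n-L\geqslant 2+2\rho_k(p)$. Since $P$ is a polynomial with coefficients in $\mathbb{Z}/p^n\mathbb{Z}$, Taylor's formula gives
\[ P(t+pu+p^{\mu}s)\equiv P(t+pu)+P'(t+pu)p^{\mu}s\pmod{p^{2\mu-\rho}} \]
up to the harmless denominators in higher coefficients; these are controlled because $P(y+h)-P(y)-P'(y)h$ is $h^2$ times an integral polynomial. So $g(u):=P(t+pu)$ satisfies \eqref{diffble-eq1} with $g_1(u)=pP'(t+pu)$ as a function of $u$. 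Directly: take $\mu=N-1-\rho$ with $\rho=\rho_k(p)$. Then $2\mu\geqslant N$ because $N\geqslant 2+2\rho$. Shifting $t+pu$ by $p^{\mu}s$ changes $P/p^{N}$ by $P'(t+pu)p^{\mu}s/p^{N}$ modulo $1$.

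The key step is to group $u$ into cosets modulo $p^{\mu-1}$ (shifts $pu\mapsto pu+p^{\mu}s$, $s\in\mathbb{Z}/p^{N-\mu}$). On each coset the sum becomes
\[ e(\cdot)\sum_{s\bmod p^{N-\mu}}e\Big(\frac{P'(t+pu)s}{p^{N-\mu}}\Big), \]
where $N-\mu=1+\rho$. This vanishes exactly when $p^{1+\rho}\nmid P'(t+pu)$, i.e. when $\ord_pP'(t+pu)\leqslant\rho$. That holds by the hypothesis in \eqref{classN-def}, which is required for every representative of $t$, and $t+pu$ is such a representative. Summing over cosets and over $t\in T$ then gives zero.

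The main obstacle is bookkeeping. One must check that the coset decomposition is legitimate, i.e. that $\mu-1\leqslant n-\kappa-1$ so the shifts live inside the block; this follows from $\mu<N=n-L\leqslant n-\kappa$. One must also check that the linear Taylor approximation of the polynomial $P$ modulo $p^{N}$ holds with shift $p^{\mu}$. Since $P$ has integral coefficients, the quadratic remainder is divisible by $p^{2\mu}$, and $2\mu\geqslant N$, so it is exact. Finally, the inner character sum vanishes because the nonzero frequency $P'(t+pu)/p^{1+\rho}$ has denominator $p^{1+\rho-\ord_pP'}\geqslant p$ and the sum runs over a full period.
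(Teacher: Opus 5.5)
Your argument is correct and is essentially the paper's proof. The paper likewise reduces to $\sum_{t}e\big(P_\kappa(t)/p^{n-L_\kappa}\big)$ over $T+p\mathbb{Z}$ and applies the first-derivative stationary phase (part (1) of Lemma~\ref{statphase-lemma}) at level $\lceil (n-L_\kappa)/2\rceil$, which leaves only those $t$ with $P_\kappa'(t)\equiv 0\bmod p^{\lfloor (n-L_\kappa)/2\rfloor}$; this set is empty because $\lfloor (n-L_\kappa)/2\rfloor\geqslant 1+\rho_k(p)>\ord_pP_\kappa'(t)$. You carry out the same coset averaging by hand, block by block and at the finer level $p^{n-L-1-\rho_k(p)}$ (so the inner sums have length $p^{1+\rho_k(p)}$), which is equally valid.
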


\begin{proof}
Using the definition of the class $\mathcal{N}_F$ as in \eqref{classN-def}, we have for $B=\bigsqcup_{t\in T}(x_0+tp^{\kappa})$ 
that
\begin{align*}
S_F(B):=\sum_{x\in B}e\Big(\frac{F(x)}{p^n}\Big)
&=e\Big(\frac{F(x_0)}{p^n}\Big)\sum_{t\in (T+p\mathbb{Z})/p^{n-\kappa}\mathbb{Z}}e\Big(\frac{P_{\kappa}(t)}{p^{n-L_{\kappa}}}\Big)\\
&=p^{L_{\kappa}-\kappa}e\Big(\frac{F(x_0)}{p^n}\Big)\sum_{t\in(T+p\mathbb{Z})/p^{n-L_{\kappa}}\mathbb{Z})}e\Big(\frac{P_{\kappa}(t)}{p^{n-L_{\kappa}}}\Big).
\end{align*}
Denoting $\tilde{\kappa}=\lfloor (n-L_{\kappa})/2\rfloor$ and using the stationary phase Lemma~\ref{statphase-lemma} with $\kappa\mapsto n-L_{\kappa}-\tilde{\kappa}$, we have that
\[ S_F(B)=p^{L_{\kappa}-\kappa+\tilde{\kappa}}e\Big(\frac{F(x_0)}{p^n}\Big)\sum_{\substack{t\in(T+p\mathbb{Z})/p^{n-L_{\kappa}-\tilde{\kappa}}\mathbb{Z}\\P_{\kappa}'(t)\equiv 0\bmod{p^{\tilde{\kappa}}}}}e\Big(\frac{P_{\kappa}(t)}{p^{n-L_{\kappa}}}\Big), \]
keeping in mind that the defining conditions of the class $\mathcal{N}_F$ guarantee that $n-L_{\kappa}-\tilde{\kappa}\geqslant\max(1,(n-L_{\kappa})/2)$. Since $n-L_{\kappa}\geqslant 2+2\rho_k(p)$, we have that $\tilde{\kappa}\geqslant 1+\rho_k(p)$. On the other hand, the definition of the class $\mathcal{N}_F$ requires that $\ord_pP_{\kappa}'(t)\leqslant\rho_k(p)$ for all $t\in T+p\mathbb{Z}$. Therefore, the sum over $t$ is empty, and $S_F(B)=0$.
\end{proof}

Over domains $B\in\mathcal{N}_F$, Lemma~\ref{total-cxl-lemma} shows that the phase in the exponential sum $\sum_{x\in B} e(F(x)/p^n)$ exhibits sufficient oscillation (over small $p$-adic neighborhoods) that the entire sum cancels out. Contrasting this is the situation when $B\in\mathcal{F}_{F,\kappa}$: in light of the power series expansion \eqref{expansion}, the phase is constant modulo $p^{L_{\kappa}}$ with $L_{\kappa}\geqslant n-1-2C$, so that there is no reason to expect significant cancellation beyond some accidental coincidence. At that point, we thus bound the sum trivially and estimate its size in the following lemma.

\begin{lemma}
\label{estimate-noncxl}
Let $F\in\mathcal{F}_k(\mathbb{Z}/p^n\mathbb{Z})$, $\kappa_0(F)\leqslant\kappa\leqslant n$, and $B\in\mathcal{F}_{F,\kappa}$. Then
\[ \bigg|\sum_{x\in B}e\Big(\frac{F(x)}{p^n}\Big)\bigg|\leqslant |B|\asymp p^{n-\kappa}\ll p^{n-\lceil\frac{n-\Delta_k(F)-1-2\rho_k(p)}{k}\rceil}\ll_kp^{n-\frac{n-\Delta_k(F)-1}{k}}. \]
\end{lemma}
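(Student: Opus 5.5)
The trivial bound $|\sum_{x\in B}e(F(x)/p^n)|\leqslant|B|$ is immediate, and $|B|\asymp p^{n-\kappa}$ holds since $B$ is either $B_\kappa(x_0)$, of size $p^{n-\kappa}$, or $B_\kappa^\circ(x_0)=B_\kappa(x_0)\setminus B_{\kappa+1}(x_0)$, of size $p^{n-\kappa}(1-1/p)$. The content of the lemma is therefore a lower bound on $\kappa$: I will show that $L_\kappa(x_0)\geqslant n-1-2\rho_k(p)$ forces
\[ \kappa\geqslant\Big\lceil\frac{n-\Delta_k(F)-1-2\rho_k(p)}{k}\Big\rceil. \]
Since $\rho_k(p)$ and $\lceil\cdot\rceil$ cost only a factor $p^{O_k(1)}$, and indeed only $O_k(1)$ when $\rho_k(p)=0$ for $p\geqslant p_0(k)$ with the finitely many small primes absorbed into the implied constant, the final $\ll_k$ follows.

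To get the lower bound on $\kappa$, I use the definition $L_\kappa(x_0)=\min_{1\leqslant i\leqslant k}\min\big(n,\ord_p\frac{F^{(i)}(x_0)}{i!}+\kappa i\big)$. Choose $i_0\in[k]$ attaining $\Delta_k(F;x_0)=\min_{1\leqslant i\leqslant k}\min\big(n,\ord_p\frac{F^{(i)}(x_0)}{i!}\big)$. There are two cases.

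If $\Delta_k(F;x_0)<n$, then
\[ n-1-2\rho_k(p)\leqslant L_\kappa(x_0)\leqslant\Delta_k(F;x_0)+\kappa i_0\leqslant\Delta_k(F)+k\kappa, \]
using $\Delta_k(F;x_0)\leqslant\Delta_k(F)=\max_{x}\Delta_k(F;x)$. Rearranging and using integrality of $\kappa$ gives the claimed bound.

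If instead $\Delta_k(F;x_0)=n$, then $\Delta_k(F)=n$ and the exponent $(n-\Delta_k(F)-1-2\rho_k(p))/k$ is negative. The required bound $|B|\ll p^{n+\text{something}\geqslant 0}$ then holds trivially from $|B|\leqslant p^n$; here I only need to make sure the ceiling of a negative quantity is interpreted consistently, which it is, since $p^{n-\kappa}\leqslant p^{n}$.

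The only mildly delicate point is the case $B=B_\kappa^\circ(x_0)$. There $L_\kappa$ is constant on $B_\kappa(x_0)$ by the local constancy \eqref{local-constancy}, valid for $\kappa\geqslant\kappa_0+r_k(p)$. For the smaller range of $\kappa$ allowed in the lemma, I avoid local constancy altogether and use the value at the center $x_0$ directly, since membership in $\mathcal{F}_{F,\kappa}$ is defined via $L_\kappa(x_0)$. I expect no real obstacle: the whole proof is the single inequality $L_\kappa(x_0)\leqslant\Delta_k(F)+k\kappa$, together with bookkeeping of the $\rho_k(p)$ losses into the $\ll_k$ constant.
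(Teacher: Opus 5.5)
Your proof is correct and follows the paper's argument: the content is the inequality $n-1-2\rho_k(p)\leqslant L_\kappa(x_0)\leqslant\Delta_k(F)+k\kappa$, rearranged into a lower bound for $\kappa$, with the $p^{2\rho_k(p)/k}$ loss absorbed into the $\ll_k$ constant. Your case split on $\Delta_k(F;x_0)=n$ is harmless but unnecessary, since in that case $L_\kappa(x_0)\leqslant n\leqslant\Delta_k(F)+k\kappa$ holds trivially, which is how the paper's one-line chain covers it.
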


\begin{proof}
From
\[ n-1-2\rho_k(p)\leqslant L_{\kappa}(x_0)=\min_{1\leqslant i\leqslant k}\Big[\min\Big(n,\ord\nolimits_p\frac{F^{(i)}(x_0)}{i!}+\kappa i\Big)\Big]\leqslant\Delta_k(F)+k\kappa \]
it follows that
\[ \kappa\geqslant\Big\lceil\frac{n-\Delta_k(F)-1-2\rho_k(p)}{k}\Big\rceil. \]
The lemma is immediate from this.
\end{proof}

Combining the results of Lemmata~\ref{split-domain-proposition}, \ref{total-cxl-lemma} and \ref{estimate-noncxl}, we will obtain the following proposition, which is our main estimate for complete exponential sums with phases in $\mathcal{F}_k(\mathbb{Z}/p^n\mathbb{Z})$.

\begin{proposition}
\label{Fk-final-estimate}
For every odd prime $p$, $q=p^n$, $F\in\mathcal{F}_k(\mathbb{Z}/p^n\mathbb{Z})$, and any $p^{\kappa_0(F)}\mathbb{Z}/p^n\mathbb{Z}$-invariant weight $g:A_F\to\mathbb{C}$ with $|g|\ll_k1$,
\[ \sum_{x\in A_F}g(x)e\Big(\frac{F(x)}{p^n}\Big)\ll_kp^{n-\frac{n-\Delta_k(F)-1}{k}+1}, \]
where $\Delta_k(F)\leqslant n$ is as in Definition~\ref{Fk-def}.
\end{proposition}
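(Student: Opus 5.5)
The plan is to cover $A_F$ by $\OO(p^{\kappa^\ast})$ residue classes $B_{\kappa^\ast}(x_0)$, with $\kappa^\ast=\kappa_0(F)+r_k(p)=\OO_k(1)$. We then apply Lemma~\ref{split-domain-proposition} to each class, and finally estimate the resulting pieces with Lemmata~\ref{total-cxl-lemma} and \ref{estimate-noncxl}. The extra factor $p^{1}$ in the claimed bound, together with the implied constant depending on $k$ (and on $p$ through the bounded quantities $\rho_k(p)$ for the finitely many small primes), is meant to absorb the number of starting classes and the loss in passing from $\lceil\cdot\rceil$ to the stated exponent.

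First we write $A_F$ as a disjoint union of the sets $B_{\kappa^\ast}(x_0)$, with $x_0$ running over representatives of $A_F/p^{\kappa^\ast}\mathbb{Z}$; this is possible since $A_F$ is $p^{\kappa_0}$-invariant. For $p\geqslant p_0(k)$ we have $\kappa^\ast=1$, so there are at most $p$ such classes. For the finitely many smaller primes, the number of classes is $p^{\OO_k(1)}=\OO_k(1)$. On each $B_{\kappa^\ast}(x_0)$, Lemma~\ref{split-domain-proposition} gives a decomposition
\[ B_{\kappa^\ast}(x_0)=\bigsqcup_{B'\in\mathcal{B}_F^{\sharp}}B'\sqcup\bigsqcup_{B''\in\mathcal{B}_F}B'', \]
with $\mathcal{B}_F^{\sharp}\subseteq\mathcal{F}_F$ of cardinality $\OO_k(1)$ and $\mathcal{B}_F\subseteq\mathcal{N}_F$.

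Next we handle the weight $g$. Every set in $\mathcal{N}_F$ is a union of classes $B_{\kappa+1}(\cdot)$ with $\kappa+1\geqslant\kappa_0+1>\kappa_0$. Because $g$ is $p^{\kappa_0}$-invariant, it is constant on each piece $B''$, or at least on each $B_{\kappa+1}(x_0+tp^\kappa)$; here we must use that all these classes lie in one class mod $p^{\kappa}$ with $\kappa\geqslant\kappa_0$. If needed, we refine $T$ to singletons, which still leaves a set in $\mathcal{N}_F$ by the definition. Lemma~\ref{total-cxl-lemma} then shows that each weighted sum $g(B'')\sum_{x\in B''}e(F(x)/p^n)$ vanishes. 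For the pieces $B'\in\mathcal{F}_{F,\lambda}$, we bound trivially using $|g|\ll_k1$ and Lemma~\ref{estimate-noncxl}. This gives $\ll_k p^{n-(n-\Delta_k(F)-1)/k}$ for each piece, where we use $\Delta_k(F;x_0)\leqslant\Delta_k(F)$ at the base point.

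Summing over the $\OO_k(1)$ pieces per class and over at most $p^{\kappa^\ast}\ll_k p$ classes yields the claimed bound $p^{n-\frac{n-\Delta_k(F)-1}{k}+1}$. The main point to verify carefully is the compatibility of the weight $g$ with the $\mathcal{N}_F$ pieces, i.e.\ that the cancellation of Lemma~\ref{total-cxl-lemma} survives multiplication by $g$. I would handle this by splitting each $B''$ into its constituent $B_{\kappa+1}$ classes, on which $g$ is constant. Since the proof of Lemma~\ref{total-cxl-lemma} works for any $T$, including a single residue, each such class contributes zero individually. A secondary point is the case where $\kappa^\ast$ exceeds $n$, or $n$ is small. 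There the claimed bound is at least $p^{n}$ up to constants, since the exponent $n-\frac{n-\Delta_k-1}{k}+1\geqslant n$ when $n\leqslant k\kappa^\ast+1$ roughly, so the trivial bound $|A_F|\leqslant p^n$ suffices after adjusting constants.
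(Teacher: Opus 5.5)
Your proposal is correct and matches the paper's proof: partition $A_F$ into the classes $B_{\kappa_0(F)+r_k(p)}(x_0)$, apply Lemma~\ref{split-domain-proposition} to each class, show the $\mathcal{N}_F$-pieces vanish by Lemma~\ref{total-cxl-lemma}, bound the $\OO_k(1)$ pieces in $\mathcal{F}_F$ by Lemma~\ref{estimate-noncxl}, and pay a factor $\ll_k p$ for the number of classes. The differences are cosmetic. Since $\kappa_0(F)+r_k(p)\geqslant\kappa_0(F)$, the paper simply pulls $g(x_0)$ out of the whole inner sum over $B_{\kappa_0(F)+r_k(p)}(x_0)$, so your splitting of the $\mathcal{N}_F$-pieces into singletons is harmless but unnecessary; your treatment of the degenerate case $\kappa_0(F)+r_k(p)>n$ covers a point the paper leaves implicit.
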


\begin{proof}
Denote $\kappa_1(F)=\kappa_0(F)+r_k(p)$. We begin by writing
\[ \sum_{x\in A_F}g(x)e\Big(\frac{F(x)}{p^n}\Big)=\sum_{[x_0]\in A_F/p^{\kappa_1(F)}\mathbb{Z}}g(x_0)\sum_{x\in B_{\kappa_1(F)}(x_0)}e\Big(\frac{F(x)}{p^n}\Big). \]
Applying Lemma~\ref{split-domain-proposition} to $B_{\kappa_1(F)}(x_0)$, we obtain finite collections $\mathcal{B}_F^{\sharp}(x_0)\subseteq\mathcal{F}_F$ and $\mathcal{B}_F(x_0)\subseteq\mathcal{N}_F$ such that $|\mathcal{B}_F^{\sharp}(x_0)|=\OO_k(1)$ and
\[ \sum_{x\in B_{\kappa_1(F)}(x_0)}e\Big(\frac{F(x)}{p^n}\Big)=\sum_{B'\in\mathcal{B}_F^{\sharp}(x_0)}\sum_{x\in B'}e\Big(\frac{F(x)}{p^n}\Big)+\sum_{B''\in\mathcal{B}_F(x_0)}\sum_{x\in B''}e\Big(\frac{F(x)}{p^n}\Big). \]
Applying Lemma~\ref{estimate-noncxl} to each of the sums over $B'\in\mathcal{B}_F^{\sharp}(x_0)$ and Lemma~\ref{total-cxl-lemma} to each of the sums over $B''\in\mathcal{B}_F(x_0)$ and putting everything together, we obtain
\[ \sum_{x\in A_F}g(x)e\Big(\frac{F(x)}{p^n}\Big)\ll_k\sum_{[x_0]\in A_F/p^{\kappa_1(F)}\mathbb{Z}}|\mathcal{B}_F^{\sharp}(x_0)|\cdot p^{n-\frac{n-\Delta_k(F)-1}{k}}\ll_k p^{n-\frac{n-\Delta_k(F)-1}{k}+1}, \]
as announced.
\end{proof}

\subsection{Recurrence properties of the Kloosterman phase}
We will need the following statement combining the reasoning from Lemma~\ref{recurrence-lemma}, \S\ref{class-subsection}, and Lemma~\ref{alignment-mod-p-lemma}.

\begin{lemma}
\label{khalf-lemma}
Let $k\in\mathbb{N}$. For any two vectors $\bm{a}=(a_i)\in(\mathbb{Z}/p^n\mathbb{Z})^k$ and $\bm{\epsilon}=(\epsilon_i)\in\{\pm 1\}^k$ and for every $x\in X_{\bm{a}}$ as in \eqref{Xa-domain}, let
\begin{equation}
\label{sigmam-xae-def}
\sigma_m(x;\bm{a},\bm{\epsilon})=\sum_{i=1}^k\epsilon_i(x+a_i)_{1/2}^{1-2m}.
\end{equation}
Then there exist constants $\rho_k(p)\in\mathbb{Z}_{\geqslant 0}$ (over all odd primes $p$) such that $\rho_k(p)=0$ for all sufficiently large $p\geqslant p_0(k)$ such that if \begin{equation}
\label{khalf-lemma-given}
p^{r+\rho_k(p)}\mid\sigma_m(x;\bm{a},\bm{\epsilon})\quad (1\leqslant m\leqslant\lceil k/2\rceil),
\end{equation}
then
\begin{equation}
\label{khalf-lemma-conclusion}
p^{r+\rho_k(p)+1}\mid p^{(1+2\rho_k(p))(m-\lceil k/2\rceil)}\frac{\sigma_m(x;\bm{a},\bm{\epsilon})}{m!}\quad (m>\lceil k/2\rceil).
\end{equation}
\end{lemma}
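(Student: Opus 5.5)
The plan is to reduce the statement to the alignment analysis of \S\ref{alignment-section}. The key is to expand every $\sigma_m(x;\bm{a},\bm{\epsilon})$ around the residue classes $t\in T$ and to read \eqref{khalf-lemma-given} as a nearly invertible linear system in the quantities $\delta_{tj,\bm{\epsilon}}$ of \eqref{deltattje-def}.

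First I dispose of the degenerate cases. Note that \eqref{khalf-lemma-given} for $1\leqslant m\leqslant\lceil k/2\rceil$ is exactly the system $\Sigma_{\bm{\epsilon}}(k,\lceil k/2\rceil;\bm{a};p^{r+\rho_k(p)})$ of \eqref{alignment-section-system}, after multiplying by the unit $(x+a_i)$ factors, i.e.\ reindexing $1-2m\mapsto -1-2(m-1)$.

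If $r\geqslant 1$ and either $k$ is odd or the splitting \eqref{kt-def-splitting} fails, then Lemma~\ref{alignment-mod-p-lemma} (equivalently, the first bullet of Lemma~\ref{alignment-main}) gives no solutions, so the claim is vacuous. If $r\leqslant 0$, the conclusion \eqref{khalf-lemma-conclusion} only asks for divisibility by $p^{\rho_k(p)+1}$. I expect this to follow trivially from the factor $p^{(1+2\rho_k(p))(m-\lceil k/2\rceil)}$, which gains at least $1+2\rho_k(p)$, together with the bound $\ord_pm!<m/(p-1)$. For $m$ close to $\lceil k/2\rceil$ this requires enlarging $\rho_k(p)$ to absorb $\ord_p m!$, as in the choice of $\rho_k(p)$ in \S\ref{class-subsection}.

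So I may assume $2\mid k$, the splitting \eqref{kt-def-splitting} holds, and $r\geqslant 1$. In this case I write, exactly as in \S\ref{alignment-section}, using \eqref{sqroot-diff},
\[ \sigma_m(x;\bm{a},\bm{\epsilon})=\sum_{t\in T}\sum_{j\geqslant 1}\binom{1/2-m}{j}(x+t)_{1/2}^{1-2(m+j)}\delta_{tj,\bm{\epsilon}}, \]
where the $j=0$ term vanishes because $\delta_{t0,\bm{\epsilon}}=0$ by the balanced splitting. Since $p\mid a_{ti}-t$, we also have $p^j\mid\delta_{tj,\bm{\epsilon}}$.

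Take the first $k/2$ equations. By Lemma~\ref{determinant-lemma}, their matrix in the unknowns $\delta_{tj,\bm{\epsilon}}$ ($1\leqslant j\leqslant k_t/2$) is the block matrix $B$ from the proof of Lemma~\ref{alignment-main}, with $\ord_p\det B\leqslant\rho_k'(p)$. The tail $j>k_t/2$ is controlled by \eqref{crazy}: it is divisible by $p^{\Delta_{t,\bm{\epsilon}}+\rho'_k(p)+1}$. Arguing as in Lemma~\ref{alignment-main}, with Cramér's rule, hypothesis \eqref{khalf-lemma-given} forces
\[ \Delta^{\ast}_{\bm{\epsilon}}\geqslant r+\rho_k(p)-\rho'_k(p)\geqslant r, \]
after enlarging $\rho_k(p)$. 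In other words, $p^{r}\mid\delta_{tj,\bm{\epsilon}}$ for $j\leqslant k_t/2$. Then Lemma~\ref{recurrence-lemma-plus-p} gives $p^{r+1}\mid\delta_{tj,\bm{\epsilon}}$ for $j>k_t/2$, and more precisely a gain of roughly $j-k_t/2-\lfloor\log_pj\rfloor$.

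Now I substitute back into $\sigma_m$ for $m>\lceil k/2\rceil$. The contribution of $j\leqslant k_t/2$ is a combination of the same $\delta_{tj,\bm{\epsilon}}$. To see that it is divisible by $p^{r+\rho_k(p)+1}$, I could try to write rows $m>k/2$ of the full matrix as combinations of the first $k/2$ rows modulo small powers of $p$. The alternative is simpler: I use that the bound on $\delta$ from Cramér is actually $p^{r+\rho_k(p)-\rho'_k(p)}$, while the compensating factor $p^{(1+2\rho_k(p))(m-k/2)}$ supplies the missing $1+\rho'_k(p)$ and also absorbs $\ord_p m!$ and the denominators of $\binom{1/2-m}{j}$. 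I commit to the second route. The $j>k_t/2$ tail gains its extra power directly from Lemma~\ref{recurrence-lemma-plus-p}.

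The main obstacle is the bookkeeping of $p$-adic valuations uniformly in $m$, namely of $\ord_p m!$ and of $\ord_p\binom{1/2-m}{j}$ for large $m$ and $j$, against the linear gains $m-\lceil k/2\rceil$ and $j-k_t/2$. Making one constant $\rho_k(p)$ work for all small primes simultaneously is where care is needed. For $p\geqslant p_0(k)$ all these corrections vanish, and the argument is exactly the Cramér step plus Lemma~\ref{recurrence-lemma-plus-p}.
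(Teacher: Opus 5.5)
\textbf{There is a gap at small primes.} Your route differs from the paper's, and for all sufficiently large $p$ it works. In that range $\tilde{k}_t=k_t$, the $\tfrac{k}{2}\times\tfrac{k}{2}$ block matrix $B$ has unit determinant, and the tail $j>k_t/2$ gains a full power of $p$ by Lemma~\ref{recurrence-lemma-plus-p}. Cram\'er's rule then gives $p^{r}\mid\delta_{tj,\bm{\epsilon}}$ for every $j$, hence $p^r\mid\sigma_m(x;\bm{a},\bm{\epsilon})$ for every $m$. Your last step with the factor $p^{(1+2\rho_k(p))(m-\lceil k/2\rceil)}$ is exactly how the paper concludes.

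The gap is at the finitely many small primes, which the lemma must also cover. For those primes, \eqref{crazy} holds only for $j>\tilde{k}_t/2$ with $\tilde{k}_t=k^{\sharp}$, not for $j>k_t/2$ as you cite it. Correspondingly, Lemma~\ref{alignment-main} needs $\ell\geqslant\lceil k/2\rceil+\rho_k(p)$ equations (a $\tilde{k}_T/2\times\tilde{k}_T/2$ system), whereas \eqref{khalf-lemma-given} supplies only $\lceil k/2\rceil$. If you keep only $k_t/2$ unknowns per class, the terms with $j$ just above $k_t/2$ are guaranteed by Lemma~\ref{recurrence-lemma-plus-p} only an extra factor $p^{1-\rho_k(p)}$. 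That need not exceed $\ord_p\det B$; for instance $\ord_p\binom{-1/2}{k/2}$ can be about $\log_p k$. So Cram\'er's rule no longer forces $\bm{\delta}^{\ast}\equiv 0\pmod p$.

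This is not bookkeeping that a larger $\rho_k(p)$ can absorb. At $m=\lceil k/2\rceil+1$ the conclusion demands $\ord_p\sigma_m(x;\bm{a},\bm{\epsilon})\geqslant r-\rho_k(p)+\ord_p m!$ for arbitrarily large $r$. That is a genuine propagation of divisibility from the first $\lceil k/2\rceil$ sums to the later ones, and your argument does not supply it when $p$ is small.

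\textbf{The missing idea.} The paper's argument is uniform in $p$ and makes the determinant detour unnecessary. Put $y_i=\epsilon_i(x+a_i)_{1/2}^{-1}$, so that $\sigma_m(x;\bm{a},\bm{\epsilon})=\sigma_{2m-1}(\bm{y})$.
\begin{enumerate}
\item Hypothesis \eqref{khalf-lemma-given} says $\sigma_j(\bm{y})\equiv\sigma_j(-\bm{y})\pmod{p^{r+\rho_k(p)}}$ for $1\leqslant j\leqslant 2\lceil k/2\rceil$. This is trivial for even $j$, and holds for odd $j$ because $2$ is a unit.
\item The first formula in \eqref{non-recursive} then gives $e_j(\bm{y})\equiv e_j(-\bm{y})\pmod{p^{r}}$ for $j\leqslant k$, with $\rho_k(p)$ absorbing its denominators.
\item The recurrence \eqref{newton-recurrence} propagates this to $\sigma_j(\bm{y})\equiv\sigma_j(-\bm{y})\pmod{p^r}$ for all $j$. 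In other words, $p^r\mid\sigma_m(x;\bm{a},\bm{\epsilon})$ for every $m$.
\end{enumerate}
Your final step then completes the proof for every odd prime $p$.
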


\begin{proof}
As in the proof of Lemma~\ref{alignment-mod-p-lemma}, denoting $\bm{y}=(y_i)$ with $y_i=\epsilon_i(x+a_i)_{1/2}^{-1}$, we have that $\sigma_m(x;\bm{a},\bm{\epsilon})=\sigma_{2m-1}(\bm{y})$, and the given conditions \eqref{khalf-lemma-given} imply that
\[ \sigma_m(\bm{y})\equiv\sigma_m(-\bm{y})\pmod{p^{r+\rho_k(p)}}\quad (1\leqslant m\leqslant 2\lceil k/2\rceil). \]
Using Newton's formulae \eqref{non-recursive} as in the proof of Lemma~\ref{alignment-mod-p-lemma}, this further implies that
\[ e_m(\bm{y})\equiv e_m(-\bm{y})\pmod{p^r}\quad(1\leqslant m\leqslant k). \]

Now, we continue the argument as in \S\ref{class-subsection}. Using the recurrence \eqref{newton-recurrence}, we conclude that
\[ \sigma_m(\bm{y})\equiv\sigma_m(-\bm{y})\pmod{p^r},\quad\text{whence}\quad p^r\mid\sigma_m(x;\bm{a},\bm{\epsilon})\quad(m\in\mathbb{N}). \]
Coupled with the elementary estimate $\ord_p(m!)<m/(p-1)\leqslant m/2\leqslant (m-\lceil k/2\rceil)$ for $m\geqslant 2\lceil k/2\rceil$, this in turn settles the claim \eqref{khalf-lemma-conclusion} (with $\rho_k(p)=0$) whenever $m>k$ or $p>k$. It remains to handle the case when $\lceil k/2\rceil<m\leqslant k$ and $3\leqslant p\leqslant k$, which as in \S\ref{class-subsection} can be done by adjusting the finitely many values of $\rho_k(p)$, specifically,  by requiring additionally that
\[ \rho_k(p)\geqslant\max_{\lceil k/2\rceil<m\leqslant k}\frac{\ord_pm!}{m-\lceil k/2\rceil}. \qedhere \]
\end{proof}

A fairly direct consequence of the above is the following statement.

\begin{lemma}
\label{khalf-lemma2}
For $k\in\mathbb{N}$, $\bm{a}\in(\mathbb{Z}/p^n\mathbb{Z})^k$, $\bm{\epsilon}\in\{\pm 1\}^k$, and $x\in X_{\bm{a}}$, let $\sigma_m(x;\bm{a},\bm{\epsilon})$ be as in \eqref{sigmam-xae-def}, and let
\[ \Delta_k(x;\bm{a},\bm{\epsilon})=\min_{1\leqslant m\leqslant\lceil k/2\rceil}\ord_p\Big((1/2)_m\frac{\sigma_m(x;\bm{a},\bm{\epsilon})}{m!}\Big). \]
Then there exist constants $\rho_k(p)\in\mathbb{Z}_{\geqslant 0}$ (over all odd primes $p$) such that $\rho_k(p)=0$ for all sufficiently large $p\geqslant p_0(k)$ such that
\[ p^{\Delta_k(x;\bm{a},\bm{\epsilon})+1}\mid p^{(1+\rho_k(p))(m-\lceil k/2\rceil)^{+}}(1/2)_m\frac{\sigma_m(x;\bm{a},\bm{\epsilon})}{m!}\quad (m>\lceil k/2\rceil). \]
\end{lemma}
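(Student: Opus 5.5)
The plan is to deduce Lemma~\ref{khalf-lemma2} from Lemma~\ref{khalf-lemma}, applied with a suitable value of $r$ determined by $\Delta:=\Delta_k(x;\bm{a},\bm{\epsilon})$. The only discrepancy between the two statements is the normalizing factor $(1/2)_m$, which we must pass through. Since $p$ is odd, $2$ is a unit, so $\ord_p(1/2)_m=\ord_p\prod_{j=0}^{m-1}(1-2j)$. This equals the $p$-adic valuation of the product of the first $m$ odd integers up to sign, which is at most $\ord_p(2m)!$. In particular $\ord_p(1/2)_m\leqslant\ord_p(2m)!<2m/(p-1)$, and for $m\leqslant\lceil k/2\rceil$ this is $0$ once $p>k$. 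We therefore set $c_k(p)=\max_{m\leqslant\lceil k/2\rceil}\ord_p((1/2)_m/m!)^{-}$ and similar bookkeeping constants, all of which vanish for $p\geqslant p_0(k)$.

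\textbf{Step 1.} From the definition of $\Delta$, for $1\leqslant m\leqslant\lceil k/2\rceil$ we have $\ord_p\sigma_m\geqslant\Delta-\ord_p(1/2)_m+\ord_p m!\geqslant\Delta-c_k(p)$. Apply Lemma~\ref{khalf-lemma} with $r=\Delta-c_k(p)-\rho_k(p)$, where $\rho_k(p)$ is the constant from that lemma. If this $r$ is nonpositive, then $\Delta$ is bounded by an $\OO_k(1)$ constant supported on small primes. In that case the claim is trivial after enlarging $\rho_k(p)$, since the required divisibility follows from the factor $p^{(1+\rho_k(p))(m-\lceil k/2\rceil)}\geqslant p$ once $\rho_k(p)\geqslant\Delta$.

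\textbf{Step 2.} For $m>\lceil k/2\rceil$, Lemma~\ref{khalf-lemma} gives
\[ \ord_p\frac{\sigma_m}{m!}\geqslant\Delta-c_k(p)+1-(1+2\rho_k(p))(m-\lceil k/2\rceil). \]
Multiplying by $(1/2)_m$ only increases the valuation. Hence $p^{\Delta+1}$ divides $p^{E}(1/2)_m\sigma_m/m!$ provided $E\geqslant c_k(p)+(1+2\rho_k(p))(m-\lceil k/2\rceil)$. Since $m-\lceil k/2\rceil\geqslant1$, this holds with $E=(1+\rho'_k(p))(m-\lceil k/2\rceil)$, where $\rho_k'(p)=2\rho_k(p)+c_k(p)$; this constant still vanishes for large $p$. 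Renaming $\rho_k'$ as $\rho_k$ gives the claim.

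The main obstacle is the accounting in Step~1 for small primes. The hypothesis of Lemma~\ref{khalf-lemma} is phrased in terms of $\sigma_m$ without the factor $(1/2)_m/m!$, so we must check that the loss $c_k(p)$ in passing between the two normalizations is absorbed by the linear factor $m-\lceil k/2\rceil\geqslant1$. Because this loss is absorbed even at $m=\lceil k/2\rceil+1$, no further case analysis should be needed.
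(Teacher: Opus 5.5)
Your main line is the paper's argument. Writing $\rho^{\circ}_k(p)$ for the constant of Lemma~\ref{khalf-lemma}, you apply that lemma with $r$ read off from the divisibility of $\sigma_m$ for $m\leqslant\lceil k/2\rceil$. You then pass the normalization $(1/2)_m/m!$ through using $\ord_p(1/2)_m\geqslant 0$ for odd $p$, and absorb the losses into $\rho_k(p)$.

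In the main case the difference is only bookkeeping. The paper bounds $\Delta_k(x;\bm{a},\bm{\epsilon})\leqslant\ord_p(1/2)_{\lceil k/2\rceil}+\min_{m\leqslant\lceil k/2\rceil}\ord_p\sigma_m$ and applies Lemma~\ref{khalf-lemma} with $r=\min_m\ord_p\sigma_m-\rho^{\circ}_k(p)$. It then uses that $\ord_p(1/2)_m$ is non-decreasing in $m$, which gives $\rho_k=2\rho^{\circ}_k$ with no further loss. You instead lower-bound $\ord_p\sigma_m\geqslant\Delta-c_k(p)$ and pay $c_k(p)$ in $\rho_k$. Either works. Note, though, that Step~1 needs $c_k(p)\geqslant\ord_p(1/2)_m-\ord_p m!$, i.e.\ the positive part of $\ord_p((1/2)_m/m!)$, not the negative part. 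For example, $\ord_5(1/2)_4=1>0=\ord_5 4!$.

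\textbf{The degenerate case $r\leqslant 0$ does not work as written.}
\begin{itemize}
\item It is not confined to small primes. For $p\geqslant p_0(k)$ it is exactly the generic situation $\Delta=0$, where $\rho_k(p)$ must stay $0$ and cannot be enlarged.
\item Your justification, ``the factor $p^{(1+\rho_k(p))(m-\lceil k/2\rceil)}\geqslant p^{1+\Delta}$ suffices'', ignores the denominator. The quantity $(1/2)_m\sigma_m/m!$ need not be $p$-integral, and since every $m>\lceil k/2\rceil$ is in play, $\ord_p m!$ is unbounded. What you actually need is $(1+\rho_k(p))(m-\lceil k/2\rceil)-\ord_p m!\geqslant\Delta+1-\ord_p\big((1/2)_m\sigma_m\big)$.
\end{itemize}

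The repair is easy. Lemma~\ref{khalf-lemma} imposes no lower bound on $r$, so your Step~2 applies verbatim when $r=0$. For $p\geqslant p_0(k)$ its hypothesis is then vacuous, and its conclusion, proved via $\ord_p m!<m/(p-1)$, is exactly what you need. This is how the paper handles all $p\geqslant p^{\circ}_0(k)$.

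Only $r<0$ remains, and this forces $p<p_0(k)$. There it suffices to take $\rho_k(p)\geqslant c_k(p)+\rho^{\circ}_k(p)+\max_{\lceil k/2\rceil<m<2\lceil k/2\rceil}\ord_p m!$. For $m\geqslant 2\lceil k/2\rceil$ you use $\ord_p m!<m/(p-1)\leqslant m-\lceil k/2\rceil$. This is the role of the $\ord_p(m!)$ term in the paper's choice of $\rho_k(p)$.
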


\begin{proof}
For clarity, re-label the values $\rho_k(p)$ and $p_0(k)$ guaranteed by Lemma~\ref{khalf-lemma} as $\rho^{\circ}_k(p)$ and $p^{\circ}_0(k)$, respectively.
Noting that trivially
\[ \Delta_k(x;\bm{a},\bm{\epsilon})\leqslant\ord_p(1/2)_{\lceil k/2\rceil}+\Delta_k^{\circ}(x;\bm{a},\bm{\epsilon}),\quad
\Delta_k^{\circ}(x;\bm{a},\bm{\epsilon}):=\min_{1\leqslant m\leqslant\lceil k/2\rceil}\sigma_m(x;\bm{a},\bm{\epsilon}), \]
the statement of Lemma~\ref{khalf-lemma2} follows directly from Lemma~\ref{khalf-lemma} for $\Delta_k^{\circ}(x;\bm{a},\bm{\epsilon})\geqslant\rho_k^{\circ}(p)$ (with $\rho_k(p)=2\rho_k^{\circ}(p)$). In particular, this handles all $p\geqslant p^{\circ}_0(k)$. It remains to treat the cases $3\leqslant p<p^{\circ}_0(k)$ and $\Delta_k^{\circ}(x;\bm{a},\bm{\epsilon})\leqslant\rho_k^{\circ}(p)-1$, when we can simply take
\[ \rho_k(p)=\max_{\lceil k/2\rceil<m\leqslant2\lceil k/2\rceil}\Big\lfloor\frac{\rho_k^{\circ}(p)+\ord_p(m!)+1}{m-\lceil k/2\rceil}\Big\rfloor. \qedhere \]
\end{proof}

\subsection{Proof of Theorem~\ref{main-theorem}}
\label{main-theorem-proof-subsection}

We are now finally ready for the proof of Theorem~\ref{main-theorem}.

\begin{proof}[Proof of Theorem~\ref{main-theorem}]
Clearly we may assume that $k\geqslant 2$, since the theorem is otherwise vacuous.
By using the classical evaluation of the Kloosterman sums in Lemma~\ref{kloost-eval} and expanding, we may write
\begin{equation}
\label{Sfe-eq}
\begin{gathered}
S(\bm{a};p^n)=\sum_{\bme\in\{\pm 1\}^k}S(f_{\bm{\epsilon}};g_{\bm{\epsilon}}), \qquad S(f_{\bm{\epsilon}};g_{\bm{\epsilon}})=\sum_{x\in X_{\bm{a}}}g_{\bm{\epsilon}}(x)e\Big(\frac{f_{\bm{\epsilon}}(x)}{p^n}\Big),\\
f_{\bm{\epsilon}}(x)=\sum_{i=1}^n 2\epsilon_i(x+a_i)_{1/2},\,\, g_{\bm{\epsilon}}(x)=\big({\textstyle\prod\nolimits_{i=1}^n\epsilon_i}\big)^{\frac{1+(p^n/4)}2}\Big(\frac{\prod_{i=1}^n(x+a_i)_{1/2}}{p^n}\Big)\quad (x\in X_{\bma}),
\end{gathered}
\end{equation}
where as in \S\ref{class-subsection} we write for short
\begin{equation}
\label{Xa-domain}
X_{\bma}=\{x\in\mathbb{Z}/p^n\mathbb{Z}:x+a_i\in(\mathbb{Z}/p^n\mathbb{Z})^{\times 2}\,(1\leqslant i\leqslant k)\}.
\end{equation}
We have already established in \S\ref{class-subsection} that the function $f_{\bm{\epsilon}}:X_{\bm{a}}\to\mathbb{Z}/p^n\mathbb{Z}$ satisfies item \eqref{Fk-def-item1} of Definition~\ref{Fk-def} for any $\kappa_0\geqslant 1$. On the other hand, in light of
\[ f_{\bm{\epsilon}}^{(i)}(x)=2(1/2)_i\sigma_{i}(x;\bm{a},\bm{\epsilon}) \]
in the notation of \eqref{sigmam-xae-def}, Lemma~\ref{khalf-lemma2} then shows that $f_{\bm{\epsilon}}$ also satisfies item \eqref{Fk-def-item2} of Definition~\ref{Fk-def} with $(1+\rho_k(p),\lceil k/2\rceil)$ in place of $(\kappa_0,k)$. This proves that the phase $f_{\bm{\epsilon}}$ belongs to the class $\mathcal{F}_{\lceil k/2\rceil}(\mathbb{Z}/p^n\mathbb{Z})$ of Definition~\ref{Fk-def}:
\[ f_{\bm{\epsilon}}\in \mathcal{F}_{\lceil k/2\rceil}(\mathbb{Z}/p^n\mathbb{Z}). \]

Next, for an arbitrary $x\in A_{f_{\bm{\epsilon}}}=X_{\bm{a}}$, we turn our attention to the quantity
\[ \Delta_{\lceil k/2\rceil}(f_{\bm{\epsilon}};x)=\min_{1\leqslant i\leqslant \lceil k/2\rceil}\ord\nolimits_p\frac{2(1/2)_i\sigma_i(x;\bm{a},\bm{\epsilon})}{i!}, \]
which satisfies
\begin{equation}
\label{Delta-x-estimate}
r\leqslant\Delta_{\lceil k/2\rceil}(f_{\bm{\epsilon}};x)\leqslant r+\rho_k(p), \quad r:=\min_{1\leqslant i\leqslant \lceil k/2\rceil}\ord\nolimits_p\sigma_i(x;\bm{a},\bm{\epsilon}),
\end{equation}
and $\rho_k(p)\geqslant 0=\max_{1\leqslant i\leqslant\lceil k/2\rceil}\ord_p\binom{1/2}{i}\geqslant 0$ are constants such that $\rho_k(p)=0$ for all sufficiently large $p>2\lceil k/2\rceil$. Recalling \eqref{sigmam-xae-def} and \eqref{alignment-section-system}, the condition that $p^r\mid\sigma_i(x;\bm{a},\bm{\epsilon})$ for all $1\leqslant i\leqslant\lceil k/2\rceil$ is precisely equivalent to the system of congruences $\Sigma_{\bm{\epsilon}}(k,\lceil k/2\rceil;\bm{a};p^r)$ considered in section~\ref{alignment-section}.
Moreover, if $2\mid k$ and the splitting conditions~\eqref{kt-def-splitting} are satisfied, then Lemma~\ref{recurrence-lemma-plus-p} applies and shows that $\ord_p\sigma_i(x;\bm{a},\bm{\epsilon})\geqslant r-\rho^{\ast}_k(p)$ for all $i\in\mathbb{N}$ for some $\rho_k^{\ast}(p)\in\mathbb{Z}_{\geqslant 0}$ such that $\rho_k^{\ast}(p)=0$ for sufficiently large $p\geqslant p_0^{\ast}(k)$, so that in this case the system $\Sigma_{\bm{\epsilon}}(k,\ell;\bm{a};p^{r-\rho_k(p)})$ is also satisfied for every $\ell\in\mathbb{N}$. The consistency of these systems (since $x\in A_{f_{\bm{\epsilon}}}$ satisfies it) implies by Lemma~\ref{alignment-main} that
\[ r-\rho_k^{\ast}(p)\leqslant\Delta_{\bm{\epsilon}}^{\ast}(\bm{a})+\rho_k'(p), \]
where
\[ \Delta_{\bm{\epsilon}}^{\ast}(\bm{a})=\begin{cases} 0,&\begin{aligned}&2\nmid k\text{ or the splitting conditions}\\&\text{\ \ \eqref{kt-def-splitting} are not satisfied},\end{aligned}\\\min\limits_{t\in T}\Delta(\bm{a}_t^{I_t},\bm{a}_t^{I_t'}),&\text{otherwise}, \end{cases} \]
for some constants $\rho_k'(p)\in\mathbb{Z}_{\geqslant 0}$ such that $\rho_k'(p)=0$ for all sufficiently large $p\geqslant p_0'(k)$, and the notations $[k_t]=I_t\sqcup I'_t$ and $\Delta(\bm{a}_t^{I_t},\bm{a}_t^{I_t'})$ are as in \eqref{kt-def-splitting} and \eqref{Delta-ta-def}. Using this estimate in the bound \eqref{Delta-x-estimate}, which holds for every $x\in A_{f_{\bm{\epsilon}}}$, we conclude that
\[ \Delta_{\lceil k/2\rceil}(f_{\bm{\epsilon}})=\max_{x\in A_{f_{\bm{\epsilon}}}}\Delta_{\lceil k/2\rceil}(f_{\bm{\epsilon}};x)\leqslant\Delta^{\ast}_{\bm{\epsilon}}(\bm{a})+\rho_k''(p), \]
where we set $\rho_k''(p)=\rho_k(p)+\rho_k'(p)+\rho_k^{\ast}(p)\in\mathbb{Z}_{\geqslant 0}$, and $\rho_k''(p)=0$ for all sufficiently large $p\geqslant\max(2\lceil k/2\rceil,p_0'(k),p_0^{\ast}(k))$.

The stage is now set for the application of the crucial Proposition~\ref{Fk-final-estimate}: since $f_{\bm{\epsilon}}\in\mathcal{F}_{\lceil k/2\rceil}(\mathbb{Z}/p^n\mathbb{Z})$ and $g_{\bm{\epsilon}}$ is $p^{\kappa_0(f_{\bm{\epsilon}})}\mathbb{Z}/p^n\mathbb{Z}$-invariant, this yields
\[ S(f_{\bm{\epsilon}};g_{\bm{\epsilon}})=\sum_{x\in X_{\bm{a}}}g_{\bm{\epsilon}}(x)e\Big(\frac{f_{\bm{\epsilon}}(x)}{p^n}\Big)\ll_kp^{n-\frac{n-\Delta_{\lceil k/2\rceil}(f_{\bm{\epsilon}})-1}{\lceil k/2\rceil}+1}\ll_kp^{n-\frac{n-\Delta^{\ast}_{\bm{\epsilon}}(\bm{a})-1}{\lceil k/2\rceil}+1}. \]
Summing over all $\bm{\epsilon}\in\{\pm 1\}^k$, we conclude that
\[ S(\bm{a};q)=\sum_{\bm{\epsilon}\in\{\pm 1\}^k}S(f_{\bm{\epsilon}};g_{\bm{\epsilon}})\ll_k\max_{\bm{\epsilon}\in\{\pm 1\}^k}p^{n-\frac{n-\Delta^{\ast}_{\bm{\epsilon}}(\bm{a})-1}{\lceil k/2\rceil}+1}
=p^{n-\frac{n-\Delta^{\ast}(\bm{a})-1}{\lceil k/2\rceil}+1}, \]
where $\Delta^{\ast}(\bm{a})=\max_{\bm{\epsilon}\in\{\pm 1\}^k}\Delta^{\ast}_{\bm{\epsilon}}(\bm{a})$ is as in Definition~\ref{Deltaast-maindef}. The proof of Theorem~\ref{main-theorem} is complete.
\end{proof}

\begin{example}
\label{mu-example}
We now consider the case \eqref{mu-example-eq} of the sum
\[ S_k(\bm{a};q)=\sumast_{x\bmod q}\Kl_2(x+\mu_kp^m;q)\Kl_2(x+\mu_k^2p^m;q)\cdots\Kl_2(x+\mu_k^kp^m;q)\Kl_2(x,q)^k, \]
with $\bm{a}=(\mu_kp^m,\mu_k^2p^m,\dots,\mu_k^{k-1}p^m,p^m,0,0,\dots,0,0)\in(\mathbb{Z}/p^n\mathbb{Z})^{2k}$, for which we claim full size $S_k(\bm{a};q)\asymp q$. Expanding using the Kloosterman sum evaluation in Lemma~\ref{kloost-eval} as in the proof of Theorem~\ref{main-theorem}, we can write
\[ S_k(\bm{a};q)=S(f_{\bm{\epsilon}_{\pm}};g_{\bm{\epsilon}_{\pm}})+S(f_{\bm{\epsilon}_{\mp}};g_{\bm{\epsilon}_{\mp}})+\sum_{\substack{\bm{\epsilon}\in\{\pm 1\}^{2k}\\\bm{\epsilon}\neq\bm{\epsilon}_{\pm},\bm{\epsilon}_{\mp}}}S(f_{\bm{\epsilon}};g_{\bm{\epsilon}}), \]
where $\bm{\epsilon}_{\pm}=((1)_k,(-1)_k)$ and $\bm{\epsilon}_{\mp}=((-1)_k,(1)_k)$ with $(\epsilon)_k=(\epsilon,\epsilon,\dots,\epsilon)\in\{\pm 1\}^k$, and $f_{\bm{\epsilon}}$, $g_{\bm{\epsilon}}$ are as in \eqref{Sfe-eq}, with $A_{f_{\bm{\epsilon}}}=(\mathbb{Z}/q\mathbb{Z})^{\times 2}$. Now, using the expansion \eqref{sqroot-diff}, we compute that
\begin{align*}
f_{\bm{\epsilon}_{\pm}}(x)
&=\sum_{i=1}^k2(x+\mu_k^ip^m)_{1/2}-2k(x)_{1/2}\\
&\equiv\sum_{j=0}^{k-1}2\binom{1/2}{j}x_{1/2}^{1-2j}p^{jm}\sum_{i=1}^k\mu_k^{ij}-2k(x)_{1/2}\equiv 0\pmod{p^n},
\end{align*}
since the inner sum over $i$ vanishes unless $j=0$. Similarly, we find that $f_{\bm{\epsilon}_{\mp}}(x)=0$, and we also see that $g_{\bm{\epsilon}_{\pm}}(x)=g_{\bm{\epsilon}_{\mp}}(x)=(-1)^{k(1+(p^n/4))/2}=c$ is a non-zero constant. Thence
\[ S(f_{\bm{\epsilon}_{\pm}};g_{f_{\bm{\epsilon}_{\pm}}})=S(f_{\bm{\epsilon}_{\mp}};g_{f_{\bm{\epsilon}_{\mp}}})=\tfrac12c(1-1/p)\cdot q. \]
On the other hand, for $\bm\epsilon\not\in\{\bm{\epsilon}_{\pm},\bm{\epsilon}_{\mp}\}$, consider the sets $I^{+}$ and $I^{-}$ given by $I^{\epsilon}=\{1\leqslant i\leqslant 2k:\epsilon_i=\epsilon\}$. Unless $|I^{+}|=|I^{-}|=k$, the splitting conditions \eqref{kt-def-splitting} are not satisfied, so $\Delta_{\bm{\epsilon}}^{\ast}(\bm{a})=0$ and the proof of Theorem~\ref{main-theorem} shows that
\[ S(f_{\bm{\epsilon}};g_{\bm{\epsilon}})\ll_kp^{n-\frac{n-1}{k}+1}. \]
Otherwise, when $|I^{+}|=|I^{-}|=k$ and $\bm{\epsilon}\not\in\{\bm{\epsilon}_{\pm},\bm{\epsilon}_{\mp}\}$, we have that
\[ \prod_{i\in I^{+}\cap[k]}(X-\mu_k^i)X^{|I^{+}\setminus[k]|}\neq\prod_{i\in I^{-}\cap[k]}(X-\mu_k^i)X^{|I^{-}\setminus[k]|}\quad\text{in }(\mathbb{Z}/p\mathbb{Z})[X]. \]
Therefore, there exists an $0<j\leqslant\max(|I^{+}\cap[k]|,|I^{-}\cap[k]|)<k$ such that
\[ \ord_p\Big(\sum_{i\in I^{+}\cap[k]}\mu_k^{ij}-\sum_{i\in I^{-}\cap[k]}\mu_k^{ij}\Big)\leqslant\rho_k(p), \]
for some constant $\rho_k(p)\geqslant 0$ such that $\rho_k(p)=0$ for all $p\geqslant p_0(k)$. But then the corresponding power sums of $\bm{a}^{\epsilon}=\{a_i:\epsilon_i=\epsilon\}$ satisfy
\[ \Delta_j(\bm{a}^{+},\bm{a}^{-})\leqslant mj+\rho_k(p), \]
whence $\Delta(\bm{a}^{+},\bm{a}^{-})\leqslant n-m+\rho_k(p)$. The proof of Theorem~\ref{main-theorem} shows that
\[ S(f_{\bm{\epsilon}};g_{\bm{\epsilon}})\ll_kp^{n(1-1/k^2)+1+1/k} \]
and so, as claimed,
\[ S_k(\bm{a};q)=c(1-1/p)q+\OO\nolimits_{k,p}(q^{1-1/k^2})\asymp_{k,p}q. \]
\end{example}

\bibliographystyle{amsalpha}
\bibliography{SumProKloost-ref}

\end{document}